\renewcommand{\a}{\alpha}
\renewcommand{\b}{\beta}
\renewcommand{\d}{\delta}
\newcommand{\D}{\Delta}
\newcommand{\e}{\varepsilon}
\newcommand{\f}{\varphi}
\newcommand{\g}{\gamma}
\renewcommand{\l}{\lambda}
\newcommand{\s}{\sigma}
\renewcommand{\t}{\theta}
\newcommand{\R}{{\bf R}}
\newcommand{\Z}{{\bf Z}}
\newcommand{\C}{{\bf C}}
\newcommand{\E}{{\bf E}}
\newcommand{\wh}{\widehat}
\newcommand{\wt}{\widetilde}
\newcommand{\HH}{\mathcal{H}}
\newcommand{\BB}{\mathcal{B}}
\newcommand{\PP}{\mathcal{P}}
\newcommand{\sm}{\backslash}
\newcommand{\supp}{{\rm supp}\,}
\newcommand{\vspan}{{\rm span}\,}
\newcommand{\be}{\begin{equation}}
\newcommand{\ee}{\end{equation}}
\newcommand{\Fou}[2]{#1^{\wedge}(#2)}
\newtheorem{lem}{Lemma}
\newtheorem{thm}[lem]{Theorem}
\newtheorem{prp}[lem]{Proposition}
\newtheorem*{thma}{Theorem A}
\newcommand{\Isom}{{\rm Isom}}
\newcommand{\dist}{{\rm dist}}
\newcommand{\Lip}{{\rm Lip}}
\newcommand{\Res}{{\rm Res}}
\newcommand{\Vol}{{\rm Vol}}
\renewcommand{\O}{{\rm O}}
\newcommand{\SU}{{\rm SU}}
\newcommand{\SO}{{\rm SO}}
\subjclass[2010]{Primary 60B15; Secondary 05E15, 60G50}
\keywords{Random walks on groups, motion groups, Euclidean isometries, local limit theorem, central limit theorem.}
\title{Random walks in Euclidean space}
\author{P\'eter P\'al Varj\'u}
\thanks{
I acknowledge the support
of the European Research Council
(Advanced Research Grant 267259)
and the Simons Foundation}
\address{
Centre for Mathematical Sciences, Wilberforce Road, Cambridge CB3 0WA,
England
and
The Einstein Institute of Mathematics, Edmond J. Safra Campus,
Givat Ram, The Hebrew University of Jerusalem, Jerusalem, 91904, Israel}
\email{pv270@dpmms.cam.ac.uk}
\date{\today}
\begin{document}

\begin{abstract}
Fix a probability measure on the space of isometries of
Euclidean space $\R^d$.
Let $Y_0=0,Y_1,Y_2,\ldots\in\R^d$ be a sequence of random points
such that $Y_{l+1}$ is the image of $Y_l$ under a random isometry
of the previously fixed probability law,
which is independent of $Y_l$.
We prove a Local Limit Theorem for $Y_l$ under necessary non-degeneracy
conditions.
Moreover, under more restrictive but still general conditions we give a
quantitative estimate which describes the behavior of the law of $Y_l$
on scales $e^{-cl^{1/4}}<r<l^{1/2}$.
\end{abstract}

\maketitle

\section{Introduction}
\label{sc_intro}

Let $X_1,X_2,\ldots$ be independent identically distributed
random isometries of Euclidean space $\R^d$.
Let $x_0\in\R^d$ be any point and consider the sequence of points
\[
Y_0=x_0,\ldots, Y_l=X_l(X_{l-1}(\ldots(x_0))), \ldots .
\]
We call this sequence the {\em random walk} started from the point $x_0$,
and $Y_l$ is its $l$th step.

The purpose of this paper is to understand the distribution of
$Y_l$.

This problem can be traced back to Arnold and Krylov \cite{AK-uniform}
who studied the mixing of the random walk on the sphere
where the steps are rotations.
They asked if their results extend to isometries of Euclidean
or hyperbolic space.

Existing results in the literature can be divided into two classes.
Some papers describe the behavior of the measure on scale $O(1)$
others do it on scale $O(\sqrt{l})$.
We begin by discussing the first category.

Ka\v zdan \cite{Kaz-uniform} and Guivarc'h \cite{Gui-uniform}
proved a Ratio Limit Theorem for $d=2$.
This result describes the local behavior of the distribution of $Y_l$.
It states that the conditional distribution of $Y_l$ on a fixed
compact set is asymptotically uniform, i.e. Lebesgue.
More precisely, for any two smooth compactly supported
functions $f$ and $g$ we have
\[
\lim_{l\to\infty}\frac{\E[f(Y_l)]}{\E[g(Y_l)]}
=\frac{\int f(x)dx}{\int g(x)dx}
\]
provided that the denominator on the right do not vanish.
The law of $X_1$ of course need to satisfy some natural
non-degeneracy conditions for which we refer the reader to the
original articles.
The proofs rely on the fact that $\SO(2)$
is commutative. 

In the papers \cite{BBC-LLT}, \cite{Kho-LLT}, \cite{Max-LLT} the Local Limit Theorem
is generalized to higher dimension, however the arguments require some
restrictive assumption on the law of $X_1$, e.g. absolute continuity,
which implies that the group generated by the support of $X_1$ contains
translations.
In the absence of translations new ideas are
required to obtain
the Local Limit Theorem in full generality
which is the main goal of our paper.

Very recently, Conze and Guivarc'h \cite{CG-spectralgap} proved a Ratio Limit Theorem
under a certain assumption on the associated random walk on $\SO(d)$.
This assumption may hold in full generality but it has been
verified only under special circumstances yet.
(We elaborate on this assumption in Section \ref{sc_high} after Theorem A.)
Their approach also does not rely on translations,
but differ from the methods of this paper.

Tutubalin \cite{Tut-CLT} proved a Central Limit Theorem for dimension $d=2$
and $d=3$,
which was later generalized to higher dimension by Gorostiza \cite{Gor-CLT}
and Roynette \cite{Roy-CLT}.
The Central Limit Theorem describes the behavior of the distribution of
$Y_l$ on scale $O(\sqrt{l})$.
More precisely, it claims that $Y_l/\sqrt{l}$ converges weakly
to a Gaussian distribution if $Y_1$ has finite second moments.
The Central Limit Theorem was revisited by many authors,
see e.g. \cite{Gri-CLT}, \cite{RY-CLT}, \cite{Kho-CLT} and \cite{AMN-Euclidean}.
In these works the Central Limit Theorem was even generalized to cases
when the distribution  $Y_1$ has infinite second moment and
the limit distribution is not Gaussian.

To formulate our results we need to make some non-degeneracy
condition on the law of $X_i$.
We say that the law of $X_i$ is {\em degenerate} if there is a proper closed subset
$A$ of $\R^d$ and an isometry $\g\in\Isom(\R^d)$ such that $Y_l$ is almost
surely contained in $\g^l(A)$.
We say that the law of $X_i$ is {\em non-degenerate} if it is not degenerate.
Before we state the main result of the paper we state two simpler
results which can be deduced from our method.
The following version of the Central Limit Theorem follows from our work:

\begin{thm}[Central Limit Theorem]
\label{th_central}
Let $X_i, Y_i$ and $x_0$ be as above.
Suppose that $Y_1$ has finite second moments and the law of $X_i$
is non-degenerate.
Then there is a vector $v_0\in \R^d$ such that
the distribution of $(Y_l-lv_0)/\sqrt{l}$ weakly
converges to a Gaussian distribution.
\end{thm}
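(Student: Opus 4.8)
The plan is to write $Y_l$ explicitly as a sum of rotated independent increments, to remove by a change of coordinates a bounded drift fluctuation so that the remaining sum becomes a genuine martingale, and then to invoke a standard multidimensional martingale central limit theorem; non-degeneracy will enter only to guarantee that the limiting covariance is invertible. Write $X_i(x)=R_ix+b_i$ with $R_i\in\O(d)$ and $b_i\in\R^d$. Composing the isometries gives $Y_l=\sum_{k=1}^{l}(R_lR_{l-1}\cdots R_{k+1})b_k$, and since the pairs $(R_i,b_i)$ are i.i.d.\ this has the same law as $\wt Y_l:=\sum_{k=1}^{l}T_{k-1}b_k$, where $T_j:=R_1\cdots R_j$ and $T_0=\mathrm{id}$ (reverse the order of the $l$ pairs and relabel). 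A general starting point $x_0$ only adds the bounded vector $T_lx_0$, negligible on scale $\sqrt l$, so I may take $x_0=0$. Let $G\le\O(d)$ be the closed subgroup generated by the support of the law $\nu$ of $R_1$, let $\mu_G$ be its Haar measure, $V\subseteq\R^d$ the subspace of $G$-fixed vectors, $P$ the orthogonal projection onto $V$, and $\bar R:=\E[R_1]$ regarded as an operator on $\R^d$. A one-line Cauchy--Schwarz argument shows $\ker(I-\bar R)=V$; since $\|\bar R\|\le1$ and both $V$ and $V^\perp$ are $\bar R$-invariant, $\bar R$ has no eigenvalue $1$ on $V^\perp$ and $I-\bar R$ is invertible there.

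First I would conjugate the walk by the translation $x\mapsto x+\wt u$, where $\wt u:=(I-\bar R|_{V^\perp})^{-1}(I-P)\E[b_1]\in V^\perp$. This changes $\wt Y_l$ by the bounded vector $\wt u-T_l\wt u$ only, and replaces $b_k$ by $b_k':=b_k+R_k\wt u-\wt u$, whose mean $v_0:=\E[b_k']$ equals $P\,\E[b_1]$ and hence lies in $V$, by the choice of $\wt u$. Put $\FF_k:=\s((R_1,b_1),\dots,(R_k,b_k))$. Since $T_{k-1}$ is $\FF_{k-1}$-measurable and independent of $(R_k,b_k)$, and $T_{k-1}v_0=v_0$, we have $\E[\,T_{k-1}(b_k'-v_0)\mid\FF_{k-1}\,]=0$, and therefore
\[
M_l:=\sum_{k=1}^{l}T_{k-1}(b_k'-v_0)
\]
is an $\R^d$-valued martingale with $M_l=\wt Y_l-lv_0$ up to a bounded error. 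One also checks $v_0=\lim_{l\to\infty}\E[Y_l]/l$, so $v_0$ is the vector appearing in the statement.

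It then remains to apply a standard multidimensional martingale central limit theorem to $M_l$. The increments $\xi_k:=T_{k-1}(b_k'-v_0)$ satisfy $\|\xi_k\|=\|b_k'-v_0\|$, which are i.i.d.\ with finite second moment (as $b_1$ has finite second moments and $R_k\wt u$ is bounded); this yields the conditional Lindeberg condition at once. For the conditional covariances, $\E[\xi_k\xi_k^T\mid\FF_{k-1}]=T_{k-1}C'T_{k-1}^T$ with $C':=\mathrm{Cov}(b_1')$, so it is enough to know that $\frac1l\sum_{k=0}^{l-1}T_kC'T_k^T$ converges in probability to $\Sigma:=\int_G gC'g^T\,d\mu_G(g)$. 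This is the weak law of large numbers for the random walk $(T_k)_{k\ge0}$ on the compact group $G$: the Markov operator $f\mapsto\int_G f(\cdot\,h)\,d\nu(h)$ on $L^2(G,\mu_G)$ has only the constants as invariant functions (a maximum-principle argument, using that $\supp\nu$ generates $G$ densely), so the empirical measures $\frac1l\sum_{k<l}\d_{T_k}$ converge weakly to $\mu_G$ in probability, which gives the claim. The martingale CLT now yields $M_l/\sqrt l\Rightarrow\NN(0,\Sigma)$, hence $(Y_l-lv_0)/\sqrt l\Rightarrow\NN(0,\Sigma)$ after undoing the bounded reductions.

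Finally, non-degeneracy is used precisely to force $\Sigma$ to be invertible. If $n^T\Sigma n=0$ for a unit vector $n$, then $\langle n,gb_1'\rangle$ is almost surely constant for $\mu_G$-almost every $g$, which forces $b_1'$ to lie almost surely in an affine subspace $v_0+W^\perp$, where $W:=\span(Gn)\ne\{0\}$ is a $G$-invariant subspace; carrying this through $\wt Y_l$ and undoing the conjugation shows that $Y_l$ stays within bounded distance of the affine subspace $lv_0+W^\perp$, i.e.\ $Y_l\in\g^l(A)$ with $\g(x)=x+v_0$ and $A=\{x:\dist(x,W^\perp)\le C\}$ a proper closed subset of $\R^d$, contradicting non-degeneracy. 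The one place that needs care in the whole argument is the law of large numbers on $G$ when the walk is periodic (i.e.\ $\nu$ supported on a coset of a proper normal subgroup of $G$): there only the Cesàro averages converge, which is precisely the form used above; everything else is bookkeeping around the change of coordinates and the application of the martingale CLT.
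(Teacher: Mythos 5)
Your proposal is correct, but it takes a genuinely different route from the paper. The paper deduces Theorem \ref{th_central} from Proposition \ref{pr_low}: after choosing the origin via Lemma \ref{lm_O} (which is exactly your conjugation by $\wt u$, i.e.\ the unique fixed point of the affine map $x\mapsto\int\g(x)d\mu(\g)$ modulo the $K$-fixed subspace), it works with the Fourier transform restricted to spheres, decomposes $L^2(S^{d-1})$ into the subspaces $\HH_0,\HH_1,\dots$ adapted to $K$, and shows by Taylor expansion that $S_r$ is almost block-diagonal, so that $\Res_{r/\sqrt l}\wh\nu_l$ converges to the Gaussian Fourier transform $e^{-\Delta(\xi,\xi)}$; weak convergence then follows by Plancherel. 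You instead realize $Y_l$ in law as $\sum_k T_{k-1}b_k$, recenter so that the increments become martingale differences, and apply the multidimensional martingale CLT, controlling the conditional covariances by the Ces\`aro law of large numbers for the walk $T_k$ on the compact group $G$; non-degeneracy enters only to make $\Sigma$ invertible, paralleling the paper's use of almost non-degeneracy to make $\Delta$ positive definite. Your route is softer and more self-contained for the CLT alone: it avoids the reductions of Lemmata \ref{lm_normalize} and \ref{lm_almostnd}, since Ces\`aro averaging automatically absorbs the periodicity coming from the coset structure $\t_0K$, and it is manifestly independent of Section \ref{sc_high} (relevant because Proposition \ref{pr_high} itself invokes the CLT). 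What it does not give is the quantitative control of Proposition \ref{pr_low}, which the paper needs for the error terms in Theorem \ref{th_main}. The one step you should spell out is the weak law $\frac1l\sum_{k<l}T_kC'T_k^T\to\int_G gC'g^T\,dm_G(g)$ in probability: ``only constants are $P$-invariant'' gives unique stationarity of the Haar measure, and one then needs either the standard unique-ergodicity argument (every weak-$*$ limit point of the empirical measures is a.s.\ stationary) or a second-moment computation using the uniform Ces\`aro convergence of $P^mf$ (Kawada--It\^o); both are routine, so this is a matter of presentation rather than a gap.
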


The limit distribution of course depends on the
distributions of $X_i$.
We do not describe this dependence explicitly, but
the mean and covariance matrix could be computed
from the proof.
Here we only mention, that the covariance matrix is invariant
under the rotation parts of the elements in the support of $X_i$.

This result is not new, it is covered by some of the references mentioned
earlier.
We revisit Tutubalin's argument in the greater generality considered in this paper.
(Tutubalin assumes that $\supp(X_i)$ generates a dense subgroup of
$\Isom(\R^d)$.
Moreover, he discusses the cases $d=2,3$ only, although
he does not seem to use this restriction in an essential way.)
Our main purpose is to obtain quantitative bounds, which
will be necessary for proving the error estimates in
Theorem \ref{th_main}
below.

\begin{thm}[Local Limit Theorem]
\label{th_local}
Let $X_i, Y_i$ and $x_0$ be as above.
Suppose that $Y_0$ has finite moments of order $d^2+3d+1$ and
$X_i$ are non-degenerate.
Let $f$ be any continuous and compactly supported function.
Then there is $v_0\in\R^d$ and $c>0$ depending only on the
distribution of $X_i$, such that
\[
\lim_{l\to \infty} l^{d/2}\E[f(Y_l-lv_0)]=c\int f(x)dx.
\]
\end{thm}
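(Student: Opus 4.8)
The plan is to prove the local limit theorem by Fourier inversion, using the central limit theorem (Theorem~\ref{th_central}) in the range of low frequencies and a quantitative decay estimate for the characteristic functions of $Y_l$ elsewhere. Denote by $\f_l(\xi)=\E[e^{-i\xi\cdot(Y_l-lv_0)}]$ the characteristic function of the centred walk, with $v_0$ the drift from Theorem~\ref{th_central}. For $f$ with $\wh f\in L^1$,
\[
l^{d/2}\,\E[f(Y_l-lv_0)]=\frac{l^{d/2}}{(2\pi)^d}\int_{\R^d}\wh f(\xi)\,\f_l(\xi)\,d\xi=\frac{1}{(2\pi)^d}\int_{\R^d}\wh f(\eta/\sqrt l)\,\f_l(\eta/\sqrt l)\,d\eta
\]
after the substitution $\xi=\eta/\sqrt l$. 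Here $\f_l(\eta/\sqrt l)$ is the characteristic function of $(Y_l-lv_0)/\sqrt l$, so by Theorem~\ref{th_central} it converges pointwise to $e^{-\frac12\eta^T\S\eta}$, where $\S$ is the limiting covariance, which the non-degeneracy hypothesis forces to be positive definite; meanwhile $\wh f(\eta/\sqrt l)\to\wh f(0)=\int f$. Once the limit can be taken inside the integral the theorem follows with $c=(2\pi)^{-d}\int_{\R^d}e^{-\frac12\eta^T\S\eta}\,d\eta$.

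I would first carry this out for $f$ with $\wh f$ supported in a ball $|\xi|\le R_0$, so that the $\eta$-integral is confined to $|\eta|\le R_0\sqrt l$ and the interchange of limit and integral is controlled in three ranges. For $|\eta|\le A$ dominated convergence applies, using $|\f_l|\le1$ and $|\wh f|\le\|f\|_1$. For $A\le|\eta|\le\d\sqrt l$ the quantitative form of Theorem~\ref{th_central} gives a uniform Gaussian bound $|\f_l(\eta/\sqrt l)|\le e^{-c_1|\eta|^2}$ valid in this range, so the contribution is at most $\int_{|\eta|\ge A}e^{-c_1|\eta|^2}d\eta$, which tends to $0$ as $A\to\infty$ uniformly in $l$. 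For $\d\sqrt l\le|\eta|\le R_0\sqrt l$, i.e. $\xi=\eta/\sqrt l$ in the fixed annulus $\d\le|\xi|\le R_0$, the decisive input is the estimate $\sup_{\d\le|\xi|\le R_0}|\f_l(\xi)|=o(l^{-d/2})$; multiplied by the volume $\asymp l^{d/2}$ of this $\eta$-range it is negligible. Letting $l\to\infty$ and then $A\to\infty$ proves the case of $f$ with $\wh f$ compactly supported, and applying it to nonnegative such $f$ shows $c\ge0$.

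To handle a general continuous compactly supported $f$, I would squeeze it above and below by finite linear combinations of indicators of boxes whose total integral is within $\e$ of $\int f$, and squeeze each box indicator $\mathbf 1_Q$ between functions $g^-\le\mathbf 1_Q\le g^+$ with $\wh{g^\pm}$ compactly supported and $\int(g^+-g^-)<\e$, as provided by the Beurling--Selberg extremal functions (taken as tensor products over the sides of $Q$). The band-limited case applied to the $g^\pm$ yields $l^{d/2}\P(Y_l-lv_0\in Q)\to c|Q|$, in particular the bound $l^{d/2}\P(Y_l-lv_0\in K)=O(1)$ for compact $K$, and then combining this with the band-limited case for the approximating step functions and letting $\e\to0$ gives $l^{d/2}\E[f(Y_l-lv_0)]\to c\int f$.

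The heart of the matter, and the step I expect to be hardest, is the frequency-decay bound $\sup_{\d\le|\xi|\le R_0}|\E[e^{i\xi\cdot Y_l}]|=o(l^{-d/2})$, which is where both non-degeneracy and the moment hypothesis are needed. Conditioning on the orthogonal parts $\t_1,\dots,\t_l$ of $X_1,\dots,X_l$ and writing $Y_l=\t_l\cdots\t_1 x_0+\sum_{j=1}^{l}\t_l\cdots\t_{j+1}b_j$ with $b_j$ the translation parts, the modulus of the conditional characteristic function equals $\prod_{j=1}^{l}|\wh\b_{\t_j}(|\xi|u_j)|$, where $u_j\in S^{d-1}$ performs a Markov chain driven by the $\t_j$ and $\wh\b_\t$ is the conditional characteristic function of the translation part given orthogonal part $\t$. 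A factor is close to $1$ only when the corresponding translation is nearly supported on a hyperplane perpendicular to $|\xi|u_j$; non-degeneracy is precisely what rules out this happening for all $j$ along a non-negligible set of orthogonal sequences, and a sufficiently high — here explicitly order $d^2+3d+1$ — moment hypothesis is what is needed to push the probability of the exceptional event below $l^{-d/2}$ and thereby obtain the stated decay. Establishing this estimate quantitatively is the real content; the Fourier-inversion and approximation steps above are routine.
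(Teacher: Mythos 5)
Your reduction of the theorem to Fourier inversion plus a decay estimate for $\wh\nu_l(\xi)=\E[e(\langle\xi,Y_l\rangle)]$ on a fixed annulus $\d\le|\xi|\le R_0$ is the standard skeleton, and the low- and intermediate-frequency ranges can indeed be handled by a quantitative version of the central limit theorem (this is Proposition \ref{pr_low} in the paper, which itself requires the Tutubalin-style spectral analysis of $S_r$, not just Theorem \ref{th_central} as stated). But the entire content of the theorem is the step you defer to the last paragraph, and what you offer there is not a proof: the claim $\sup_{\d\le|\xi|\le R_0}|\wh\nu_l(\xi)|=o(l^{-d/2})$ is asserted, with a heuristic that "non-degeneracy rules out all conditional factors being close to $1$" and that moments of order $d^2+3d+1$ "push the probability of the exceptional event below $l^{-d/2}$". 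This does not engage with the actual obstruction. Decompose $\R^d=V_{ss}\oplus V_a\oplus V_o$ according to the action of $K^\circ$; on $V_o$, where $K^\circ$ acts trivially, there can be resonant frequencies $\xi_o\neq 0$ at which the relevant averaging operator $T_{0,0,\xi_o}$ has a unimodular eigenvalue with a polynomial eigenfunction (the finite sets $X_\a$, $Y_\a$ of Section \ref{sc_smallSSA}), and near such points the paper does \emph{not} prove pointwise decay of order $o(l^{-d/2})$: Proposition \ref{pr_smallSSA1} only yields a pointwise bound of order $l^{-1/2}\log^{1/2}l$ there, and the argument wins by integrating over a neighborhood of the finite exceptional set of radius $\d=l^{-\b}$ shrinking with $l$, while away from that set Proposition \ref{pr_smallSSA2} uses a Taylor expansion to order $\a$ in the $\xi_{ss},\xi_a$ variables together with Burkholder's inequality. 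Thus your uniform sup bound over the annulus is strictly stronger than what the paper establishes, and there is no reason offered (and none apparent) why mere non-degeneracy would give it; at the very least your conditioning-on-rotations sketch would have to detect and handle these resonances in the $V_o$ direction, which is exactly where Propositions \ref{pr_highssa}, \ref{pr_smallSSA1} and \ref{pr_smallSSA2} do their work.

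Two further points. First, the role of the moment hypothesis is misidentified: in the paper the exponent $d^2+3d+1$ enters because one needs the Taylor expansion (and the polynomial space $\PP_\a$) to order $\a>d^2+3d$ so that the error term $C\log(s^{-1})^{\a/2+1}s^{\a}\d^{-\a-2}$ with $s\asymp l^{-1/2}\log^{1/2}l$, $\d=l^{-\b}$ beats $l^{-d/2}$; it is not a union bound on an exceptional set of rotation sequences. Second, your intermediate-range bound $|\f_l(\eta/\sqrt l)|\le e^{-c_1|\eta|^2}$ for $A\le|\eta|\le\d\sqrt l$ is also not a consequence of the weak convergence statement of Theorem \ref{th_central}; it needs a quantitative estimate of the type of Proposition \ref{pr_low}, which you would have to prove. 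The Beurling--Selberg approximation step for general continuous $f$ is fine but routine, as you say; the missing high-frequency analysis is the theorem.
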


We remark that $v_0$ is the same as in the previous theorem
and $c$ can be computed from the covariance matrix of the
limit distribution.
Moreover, it turns out from the proof, that $v_0$ is almost surely
fixed by the rotation part of $X_1$, hence it is 0, if the rotation
part of the support of $X_1$ is sufficiently rich.

When $v_0=0$, the Local Limit Theorem can be interpreted as follows:
The probability that $Y_l$ belongs to a fixed compact set with smooth
boundary is asymptotic to $cl^{-d/2}$ times the Lebesgue measure
of the set.

In the Local Limit Theorem, we need the finiteness of high order
moments for technical reasons.
However, if the group generated by the support of $X_i$ is dense in
$\Isom(\R^d)$ then our arguments imply the Local Limit Theorem under
the assumption of finite second moment only.
In fact, this is true under much weaker assumptions on the group
generated by $\supp(X_i)$ (see Theorem \ref{th_main} below).

Now we formulate the main result of the paper which gives a
quantitative description of the distribution of $Y_l$ on multiple
scales.
However, we need a more restrictive assumption that we call $(SSR)$.
We postpone the definition to the next section, where we explain the
notation used through the paper.
For now, we only mention that $(SSR)$ holds for example if $\supp(X_i)$ generates
a dense subgroup of $\Isom(\R^d)$ and $d\ge 3$.
In addition, we can improve the error terms under stronger conditions,
i.e. symmetricity or $(E)$.
These will be defined in the next section, as well.

\begin{thm}
\label{th_main}
Let $X_i, Y_i$ and $x_0$ be as above.
Suppose that the law of $X_i$ is non-degenerate, satisfies $(SSR)$
and $Y_l$ has finite moments of order $\a$
for some $\a>2$.
Furthermore, let $f$ be a smooth
function of compact support.
Then there is a point $y_0\in \R^d$, a quadratic form $\D(x,x)$
and constants $C_\Delta$ and $c>0$ that depend only on the law of $X_i$,
such that
\begin{align}
\E[ f&(Y_l)]=C_\D l^{-d/2}\int f(x)e^{-\D(x-y_0,x-y_0)/l}dx\nonumber\\
&+O(l^{-\frac{d+\min\{1,\a-2\}}{2}}+|x_0|^2l^{-\frac{d+2}{2}})\|f\|_1
+O(e^{-cl^{1/4}})\|f\|_{W^{2,(d+1)/2}}.\label{eq_thmain}
\end{align}
In addition, if $\mu$ is symmetric or satisfies $(E)$, we have
\begin{align*}
\E[ f(Y_l)]
&=C_\D l^{-d/2}\int f(x)e^{-\D(x-y_0,x-y_0)/l}dx\\
&+O(l^{-\frac{d+\min\{2,\a-2\}}{2}}+|x_0|^2l^{-\frac{d+2}{2}})\|f\|_1
+O(e^{-cl^{1/4}})\|f\|_{W^{2,(d+1)/2}}.
\end{align*}
The implied constants depend only on the law of $X_i$.
\end{thm}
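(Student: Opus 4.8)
\medskip
\noindent\textbf{Sketch of the proof.}
The plan is to study the characteristic function $f_l(\xi):=\E[e^{i\langle\xi,Y_l\rangle}]$ and to recover $\E[f(Y_l)]$ from it by Fourier inversion. Write an isometry as a pair $(\t,v)$, where $\t\in\O(d)$ is its rotation part and $v\in\R^d$ its translation part, and let $\mu$ denote the law of $X_i$. Conditioning on the last step of the walk gives $\langle\xi,Y_l\rangle=\langle\xi,v_l\rangle+\langle\t_l^{-1}\xi,Y_{l-1}\rangle$, and hence $f_l=\AA f_{l-1}$ with
\[
(\AA g)(\xi)=\int e^{i\langle\xi,v\rangle}\,g(\t^{-1}\xi)\,d\mu(\t,v),\qquad f_0(\xi)=\E[e^{i\langle\xi,Y_0\rangle}].
\]
Thus $f_l=\AA^l f_0$, and $\E[f(Y_l)]$ is, up to normalisation, the integral of $\wh f$ against $f_l$. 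Since $|\t^{-1}\xi|=|\xi|$, the operator $\AA$ fibres over the radius $r=|\xi|$: on each sphere $\{|\xi|=r\}$ it restricts to an operator $\AA_r$ on $L^2(S^{d-1})$, and $\AA_0$ is the averaging operator of the random walk on $S^{d-1}$ driven by the rotation parts of the $X_i$. Everything then reduces to controlling $\AA_r^l$ applied to $u\mapsto\E[e^{ir\langle u,Y_0\rangle}]$, uniformly in $r$, and integrating over $r$.

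For small $r$ I would use perturbation theory. Expanding $e^{ir\langle u,v\rangle}$ to second order — the hypothesis $\a>2$ controlling the remainder, of size $O(r^{\min\{\a,3\}})$ — gives $\AA_r=\AA_0+rB+r^2C+(\text{rem.})$. Non-degeneracy and $(SSR)$ identify the part of the spectrum of $\AA_0$ of modulus one and isolate a leading eigenvalue branch $\l(\xi)=1-\D(\xi,\xi)+(\text{higher order})$, with no linear term because under $(SSR)$ the drift $v_0$ of Theorems~\ref{th_central}--\ref{th_local} vanishes, so that the limiting Gaussian is stationary rather than moving with $l$. Raising $\l$ to the $l$-th power and integrating against $\wh f$ over $|\xi|\lea l^{-1/2}$ produces the main term $C_\D l^{-d/2}\int f(x)e^{-\D(x-y_0,x-y_0)/l}\,dx$, where $\D$ is essentially the inverse covariance of the Gaussian and the centre $y_0$ is an affine image of $x_0$ read off from the first-order expansion of $f_0$. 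The remainder of the spectrum of $\AA_0$ contributes $O((1-\d)^l)$; the quadratic-in-$x_0$ part of $f_0$ gives the $|x_0|^2l^{-(d+2)/2}$ term; and the remainder (estimated via $\a$) together with the first, odd, correction to the Gaussian gives the $l^{-\min\{1,\a-2\}/2}$ term, upgraded to $\min\{2,\a-2\}$ when $\mu$ is symmetric or satisfies $(E)$, since then that odd correction is absent.

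It remains to bound the contribution of $|\xi|\gea\e$, that is, to show $\|\AA_r^l\|$ is small for $r\gea\e$. For $r$ in a fixed bounded interval $[\e,R_0]$, non-degeneracy together with a compactness argument gives spectral radius $\le 1-\d$, whence $\|\AA_r^l\|\lea(1-\d)^l$. For larger $r$ this is unavailable, and one invokes $(SSR)$: it supplies a flattening/spectral-gap input for the rotation walk from which $\|\AA_r^l\|\lea e^{-cl^{1/4}}$ for all $r$ below a threshold of order $e^{cl^{1/4}}$, the exponent $1/4$ being dictated by the strength of the available flattening estimate and how far in $r$ it can be pushed. For $|\xi|$ above that threshold one uses only $\|\AA_r^l\|\le 1$ and the decay of $\wh f$: since $f$ is smooth with compact support, both $\|\wh f\|_{L^1}$ and the tail $\int_{|\xi|\ge e^{cl^{1/4}}}|\wh f(\xi)|\,d\xi$ are controlled by $\|f\|_{W^{2,(d+1)/2}}$ — which is where that norm enters — the latter being $O(e^{-cl^{1/4}})$. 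Assembling the pieces, with the errors from the region $|\xi|\lea\e$ bounded by the stated quantities times $\|\wh f\|_\infty\le\|f\|_1$, gives \eqref{eq_thmain}; the improved version follows by also tracking the odd correction in the perturbative regime.

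\emph{The main obstacle} is the large-$r$ estimate: obtaining decay of $\|\AA_r^l\|$ that is at once quantitative and uniform over the enormous range $R_0\le r\le e^{cl^{1/4}}$. Soft arguments only control $\AA_r$ for $r$ in bounded sets; as $r\to\infty$ the spectral radius of $\AA_r$ may approach $1$, and it is precisely to overcome this that $(SSR)$ is imposed — it provides a spectral gap for the associated rotation walk which a flattening argument then converts, with some loss, into the required operator bound, and it is this loss that degrades the naive rate $e^{-cl}$ down to $e^{-cl^{1/4}}$. The perturbative regime is also somewhat delicate, since $\AA_0$ need not have a spectral gap and may carry several modulus-one eigenvalues that must be separated before the leading branch can be matched to an honest Gaussian, but this is more routine than the high-frequency analysis.
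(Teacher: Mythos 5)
Your plan is essentially the paper's: Plancherel, disintegration of the Fourier transform over spheres via the operators $S_r$ on $L^2(S^{d-1})$, a Taylor/perturbation analysis at low frequency producing the Gaussian main term and the first error term, an $(SSR)$-based mixing estimate at high frequency producing $e^{-cl^{1/4}}$, and the Sobolev norm to absorb the far tail of $\wh f$. However, two steps as you state them would fail. First, the claim that for $r$ in a fixed interval $[\e,R_0]$ non-degeneracy plus a compactness argument gives $\|\AA_r^l\|\lesssim(1-\d)^l$: no such uniform spectral gap is available in this generality. The group $\Isom(\R^d)$ is amenable, so the absence of invariant vectors does not by itself exclude almost-invariant vectors at fixed $r$; a gap of this kind is essentially the unverified hypothesis of Conze--Guivarc'h, and the paper states explicitly that it cannot prove $\|S_r\|<1$. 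What is actually proved (Proposition \ref{pr_high}) is a contraction whose strength degrades with the Lipschitz norm of the test function, so your later assertion that $(SSR)$ yields an \emph{operator} bound $\|\AA_r^l\|\lesssim e^{-cl^{1/4}}$ for all $r\le e^{cl^{1/4}}$ is also not literally correct: the bound can only be run along the specific orbit $S_r^j\psi_{0,r}=\Res_r\wh{\nu_j}$, whose Lipschitz norm is controlled by the second-moment estimate (Lemma \ref{lm_uplength}, giving $\lesssim r\sqrt{j}$); this is Lemma \ref{lm_high}, and it covers the whole range $l^{-1/3}\le r\le e^{l^{1/4}}$, including your bounded interval, so the soft gap is neither available nor needed.

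Second, in the perturbative regime there is no isolated leading eigenvalue branch to follow: the $1$-eigenspace of $\AA_0=S_0$ is the infinite-dimensional space $\HH_0$ of $K$-invariant functions on $S^{d-1}$ (one-dimensional only when $K$ acts transitively), and in general $\supp\t(\mu)$ lies only in a coset $\t_0K$. The paper's substitute is Tutubalin's decomposition $\HH_0\oplus\HH_1\oplus\cdots$, the almost-invariance of these blocks under $S_r$ obtained from the Taylor expansion, the fact that the contributing block $P_0S_rP_0$ is multiplication by $F$ twisted by $\t_0$, and the symmetrization of $\D$ over the group generated by $\t_0$ to extract the Gaussian; the centre $y_0$ and the vanishing of the drift come from the recentering Lemma \ref{lm_O}, and your observation that $(SSR)$ kills the drift is correct since the $K$-fixed subspace is then trivial. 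So the architecture you propose is the right one, but the two places your sketch treats as routine or disposes of softly (a vector-specific, Lipschitz-sensitive high-frequency bound rather than a spectral gap, and the infinite-dimensional top eigenspace at $r=0$) are precisely where the substantive work of the proof lies.
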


A few remarks are in order about the conclusion of this theorem.
The norm $\|\cdot\|_1$ is the $L^1$-norm and $\|\cdot\|_{W^{2,(d+1)/2}}$
is an $L^2$ Sobolev norm defined by
\[
\|f\|_{W^{2,(d+1)/2}}^2=\int |\wh f(\xi)|^2(1+|\xi|)^{d+1}d\xi.
\]
(The exponent $d+1$ could be replaced by any number greater than $d$
and the theorem would still hold.)

The first term on the right hand sides is the main term, it is the
integral of $f$ with respect to a Gaussian measure centered at $y_0$,
and with covariance matrix $\D/l$;
$C_\D$ is simply a normalizing factor.
It will follow from the proof that $\D$ is invariant under the rotation
parts of elements of the support of $\mu$.

The other two terms are error terms.
The first is responsible for the large scale, and the second is for the
small scale behavior of the random walk.
To illustrate this, fix a smooth compactly supported function $F$,
and consider the family $f_l(x)=r_l^{-d}F(x/r_l)$ associated to a
sequence of scales $r_l$.
(I.e. the diameter of the support of $f_l$ is proportional to $r_l$.)
It is easily seen that as long as $r_l<\sqrt{l}$,
the order of magnitude of the main term
is $l^{-d/2}$, the first error term is $l^{-\frac{d+\min\{1,\a-2\}}{2}}$
while the second
error term is $e^{-cl^{1/4}}r_l^{-d-1/2}$.
This shows that the theorem gives a good approximation in the
scale range $\sqrt{l}\ge r_l\ge e^{-c'l^{1/4}}$.

The factor $O(e^{-cl^{1/4}})$ is probably not optimal.
In fact, our proofs lead to better estimates in some cases.
This is discussed in detail in Section \ref{sc_high} after
Theorem A, after the necessary background is explained.

\noindent{\bf Acknowledgments.}
I am greatly indebted to Elon Lindenstrauss for telling me about
the problem and his continued interest in my project.
I am also grateful to Jean Bourgain and Emmanuel Breuillard
for helpful conversations about various aspects of the problem.
I thank Noam Berger, Alex Lubotzky and Nikolay Nikolov for suggesting
references \cite{Bur-martingale}, \cite{Bas-Jordan-Burnside} and \cite{Got-commutator} respectively.

I thank the referee for his or her very careful reading of the paper,
and for suggestions that greatly improved the presentation.

I am grateful for the hospitality of the Mathematical
Sciences Research Institute, Berkeley, USA.

\section{Notation and outline}
\label{sc_nota}

We identify the isometry group of the $d$-dimensional
Euclidean space with the
semidirect product $\Isom(\R^d)=\R^d\rtimes \O(d)$.
For $\g=(v,\t)\in\R^d\rtimes \O(d)$ and a point $x\in\R^d$
we write
\[
\g(x)=v+\t x,
\]
and we define the product of two isometries by
\[
(v_1,\t_1)(v_2,\t_2)=(v_1+\t_1 v_2,\t_1\t_2).
\]
If $\g$ is an isometry, we write $v(\g)$ for the translation
component and $\t(\g)$ for the rotation component of $\g$ in the
above semidirect decomposition.

Let $\mu$ be a probability measure on $\Isom(\R^d)$.
Define the {\em convolution} $\mu*\mu$ in the usual way by
\[
\int_{\Isom(\R^d)} f(\g)d\mu*\mu(\g)
=\int_{\Isom(\R^d)}\int_{\Isom(\R^d)} f(\g_1\g_2)d\mu(\g_1)d\mu(\g_2),
\]
for $f\in C(\Isom(\R^d))$ and
write
\[
\mu^{*(l)}=\underbrace{\mu*\cdots *\mu}_{l-{\rm fold}}
\]
for the $l$-fold convolution.
With this notation, $\mu^{*(l)}$ is the distribution of the
product of $l$ independent random element of $\Isom(\R^d)$
of law $\mu$.
We define the measure $\wt\mu$ by the formula
\[
\int_{\Isom(\R^d)}f(\g)d\wt\mu(\g)
=\int_{\Isom(\R^d)}f(\g^{-1})d\mu(\g),
\]
for $f\in C(\Isom(\R^d))$ and say that $\mu$ is {\em symmetric} if $\wt\mu=\mu$.
The measure $\mu$ also acts on measures on $\R^d$ in the following way:
If $\nu$ is a measure on $\R^d$, we can define another measure
$\mu.\nu$ on $\R^d$ by:
\[
\int_{\R^d} f(x)d\mu.\nu(x)
=\int_{\R^d}\int_{\Isom(\R^d)}f(\g(x))d\mu(\g)d\nu(x),
\]
for $f\in C(\R^d)$.

We write $\d_{x_0}$ for the Dirac delta measure concentrated
at the point $x_0$.
With this notation, the law of $Y_l$, the $l$th step of the random walk, is
$\mu^{*(l)}.\d_{x_0}$.

Write $\t(\mu)$ for the projection of $\mu$ on $\O(d)$, i.e.
for $f\in C(\O(d))$
\[
\int_{\O(d)}f(\s)d\t(\mu)(\s)=\int_{\Isom(\R^d)}f(\t(\g))d\mu(\g).
\]

Denote by $G\subset \Isom(\R^d)$ the closure of the group
generated by $\supp(\widetilde\mu*\mu)$.
Fix any element $\g_0\in\supp \mu$.
Then it is clear that $\supp\mu\subset\g_0 G$.

We can replace $\mu$ by $\mu'=\mu^{*(k)}$ for some fixed integer $k>1$
without loss of generality,
since $Y_{lk+j}$, the $lk+j$th step of the original random walk,
is the $l$th
step of the modified random walk started from the random point $Y_j$.
If we do so
then we replace $G$ by the group $G'$ as defined as the closure of the group generated by
$\supp(\widetilde{\mu^{*(k)}}*\mu^{*(k)})$.
It can be seen easily that $G'$ is the closure of the group
generated by
\[
G\cup\g^{-1}_0G\g_0\cup\ldots\cup\g^{-k+1}_0G\g_0^{k-1}.
\]
In Lemma \ref{lm_normalize}
we will see that if we choose $k$ sufficiently large than
$\t(G')$ is normalized by $\t(\g_0^k)$.
To keep this section compact, we postpone the statement and proof of
Lemma \ref{lm_normalize}, as well as Lemmata \ref{lm_O} and \ref{lm_almostnd}
that we will mention in the next pages.

Denote by $K\subset \O(d)$
the closure of the group generated $\supp \t(\widetilde\mu*\mu)$.
By the previous paragraph, we can (and will throughout the paper)
assume without loss of
generality that $K$ is normalized by $\t_0:=\t(\g_0)$.
Denote by $K^\circ$ the connected component of $K$.
Denote by $\mu_K$ the Haar measure on the group $K$.

Now we list
the various conditions that we will stipulate
on $\mu$ in various parts of the paper.
Some of these were already mentioned in Theorem \ref{th_main}.
\begin{itemize}
\item[$(C)$] (``Centered")
The barycenter of the image of the origin in $\R^d$
under $\mu$ is the origin, i.e.
\[\int\g(0)d\mu(\g)=0.\]
\item[$(E)$] (``Even")
The action of $K$ on $\R^d$ is ``even", i.e. for every
$v\in\R^d$, there is $\t_v\in K$ such that $\t_v v=-v$.
\item[$(SSR)$] (``Semi-simple rotations") $K^\circ$ is semi-simple,
and there is no non-zero point in $\R^d$ which is fixed by $K^\circ$.
\end{itemize}

We also recall the conditions we already defined for convenient reference.
We say that $\mu$ is {\em non-degenerate}, if there is no proper
closed subset $A\subset \R^d$ and an isometry $\g\in\Isom (\R^d)$
such that $\mu^{*(l)}.\d_{x_0}$ is almost surely contained in $\g^{l}(A)$.

It will be useful for us in many places in the paper to symmetrize
$\mu$ by replacing it with $\wt\mu*\mu$.
Unfortunately, the measure we obtain this way might be degenerate.
Consider the following example in $\R^2$:
Let $\g_1$ and $\g_2$ be two rotations  about two different centers
through the same angle, which is not a rational
multiple of $\pi$.
We leave it to the reader to verify that $\g_1$ and $\g_2$ generate a dense subgroup
in the orientation preserving isometries, hence the measure
$\mu=(\d_{\g_1}+\d_{\g_2})/2$ is non-degenerate.
However, for any $k$, $\wt{\mu^{*(k)}}*\mu^{*(k)}$ is supported on pure translations.
Moreover, we can choose $\g_1$ and $\g_2$ to have matrices with rational entries, and then
the translations in the support of $\wt{\mu^{*(k)}}*\mu^{*(k)}$ will all be rational.
Hence they preserve the lattice $(1/q)\Z^2$, where
$q$ is the common denominator.
This shows that $\wt{\mu^{*(k)}}*\mu^{*(k)}$ is degenerate.

For the above reason, we introduce a different notion which is easily
seen to descend to $\wt\mu*\mu$.
We say that $\mu$ is {\em almost non-degenerate} if for every point
$x\in\R^d$, the set $\{\g(x):\g\in\supp \mu\}$ does not lie in a proper
affine subspace.
As we will see in Lemma \ref{lm_almostnd},
if $\mu$ is non-degenerate then $\mu^{*(k)}$
is almost non-degenerate for some integer $k\ge1$, but it may happen that $\mu$
itself is {\em not} almost non-degenerate.
The implication in the other direction is often true, as well.
In particular, almost non-degeneracy is sufficient for most of the paper,
except for Section \ref{sc_smallSSA}.

We say that $\mu$ have
{\em finite moments of order} $\a>0$, if
\[\int |v(\g)|^\a d\mu(\g)<\infty.\]

A few remarks are in order regarding the role of these conditions.
{\em Non-degeneracy} is clearly necessary for the Local Limit Theorem.
However, we cannot impose it always for reasons discussed above.
On the other hand, almost non-degeneracy is required throughout the paper.
Condition $(SSR)$ is needed to control the behavior of $\mu^{*(l)}$
on very small scales (up to $e^{-cl^{1/4}}$).
Under this assumption we can utilize some powerful results about random
walks on semi-simple compact Lie groups.
We assume $(SSR)$ throughout Section \ref{sc_high} and some other
parts of the paper.
{\em Symmetry} or $(E)$ allows us to improve the error terms in 
Theorem \ref{th_main}.
They will be assumed in certain parts of Section \ref{sc_low}
to show that the
cubic terms in certain Taylor expansions cancel with each other.
We assume throughout the paper that $\mu$ has {\em finite moments of
order} 2.
In Section \ref{sc_low} we assume finite moments of order $\a$
for $2\le\a\le4$ and the quality of our error terms depend on $\a$.
To be able to conclude the Local Limit Theorem without using
$(SSR)$ we assume the finiteness of higher order moments in Section
\ref{sc_smallSSA}.
Finally, $(C)$ is an assumption which does not restrict generality
as we will see in Lemma \ref{lm_O}.
Therefore we assume it throughout the paper to simplify our arguments.

Now we introduce some further notation and indicate the general
strategy of the proof of Theorem \ref{th_main}.
Recall that the distribution of the random walk started at the point $x_0$
after $l$-steps is the measure
$\mu^{*(l)}.\d_{x_0}$.
As a consequence of the definitions, we see that
\[
\mu^{*(l+1)}.\d_{x_0}=\mu.(\mu^{*(l)}.\d_{x_0}).
\]
Hence our main goal is to understand the operation $\nu\mapsto\mu.\nu$.

This is achieved by studying the Fourier transform, which is given
by the formula
\[
\wh{\nu}(\xi)=\int e(\langle\xi, x\rangle)d\nu(x),
\]
where $e(x):=e^{-2\pi ix}$.
For the Fourier transform of $\mu.\nu$ we get
\begin{align}
\Fou{(\mu.\nu)}{\xi}
&=\int e(\langle\xi,\g(x)\rangle) d\mu(\g)d\nu(x)\nonumber\\
&=\int e(\langle\xi,v(\g)+\t(\g)(x)\rangle)
d\mu(\g)d\nu(x)\nonumber\\
\label{eq_Fourier}
&= \int e(\langle\xi,v(\g)\rangle)
\wh{\nu}(\t(\g)^{-1}\xi)d\mu(\g).
\end{align}

This formula shows that the action of $\mu$ on the Fourier transform
of $\nu$ can be disintegrated with respect to spheres centered at
the origin.
For every $r\ge0$,
we define a unitary representation of the group $\Isom(\R^d)$
on the space $L^2(S^{d-1})$.
Let
\be\label{eq_defrho}
\rho_r(\g)\f(\xi)=e(r\langle\xi,v(\g)\rangle)
\f(\t(\g)^{-1}\xi)
\ee
for $\g\in\Isom(\R^d)$, $\f\in L^2(S^{d-1})$ and $\xi\in S^{d-1}$.
We also define the operator
\be\label{eq_defSr}
S_r(\f)=\int\rho_r(\g)(\f)d\mu(\g).
\ee
For a function $\f\in C(\R^d)$ and $r\ge0$,
we denote by $\Res_r\f$ its restriction
to the sphere of radius $r$.
I.e. $\Res_r: C(\R^d)\to C(S^{d-1})$ is an operator
defined by
$[\Res_r \f](\xi)=\f(r\xi)$
for $|\xi|=1$.
With this notation, we can write (\ref{eq_Fourier}) as
\[
\Res_r(\wh{\mu.\nu})(\xi)=S_r(\Res_r \wh\nu)(\xi).
\]

Operators similar to $S_r$ were introduced by Ka\v zdan \cite{Kaz-uniform}
and Guivarc'h \cite{Gui-uniform}.
Guivarc'h proved in the $d=2$ case, when $K$ is Abelian,
that $\|S_r\|<1-cr^2$
for $r<1$ and $\|S_r\|<1-c_r$ for $r\ge1$, where $c>0$ is a constant
depending only on $\mu$, while $c_r$ also depend on $r$.
These estimates are sufficient for proving a Ratio Limit Theorem,
and, as Breuillard \cite{Bre-survey} pointed out, combined with the
Central Limit Theorem, it is sufficient even
for a Local Limit Theorem.
We are unable to prove such strong estimates, but we will prove
in Section \ref{sc_high} a weaker version: Proposition \ref{pr_high},
which is still sufficient for our application.
In brief, we prove the estimate with constants $c$ and $c_r$
which (mildly) depend on the oscillations of $\f$.
The proof is based on 
mixing properties of random walks on semi-simple compact Lie groups
(see Theorem A below).

Using the estimates given in Section \ref{sc_high}, we can
show that the Fourier transform of the random walk after
$l$ steps ``lives in" the ball of radius $l^{-1/2}\log l$.
These estimates alone are sufficient for the Ratio Limit Theorem
but not for the Central or Local Limit Theorems.
The frequency range $r>l^{-1/2}\log l$ is
responsible for the second error term in Theorem \ref{th_main}.

We need a more precise understanding of the Fourier transform of
$\mu^{*(l)}.\d_{x_0}$
in the range $r\le l^{-1/2}\log l$.
This frequency range contributes the main term and the first error term in
Theorem \ref{th_main}.
In section \ref{sc_low}, we give Tutubalin's \cite{Tut-CLT} argument
for the Central Limit Theorem in the more general setting that we consider
and obtain error estimates.
In brief, this argument is based on decomposing $L^2(S^{d-1})$
as the orthogonal sum of several subspaces and using the Taylor
expansion of the function $e(x)$ showing that these subspaces
are almost invariant for $S_r$.
We show that the contribution of only one of these subspaces is
significant and that on this subspace rotations act trivially.
Hence the problem is reduced to the easy case of sums of independent
random variables.

There is some interdependency between the arguments of Sections
\ref{sc_high} and \ref{sc_low}.
We explain this to demonstrate that our proof is not circular.
The arguments of Section \ref{sc_high} depend on the Central Limit Theorem,
which in turn depends on Section \ref{sc_low}.
However, Proposition \ref{pr_low} is sufficient for the Central Limit Theorem
and its refinement, Proposition \ref{pr_low2} is not needed.
Among the results of Section \ref{sc_high}, only Proposition \ref{pr_low2}
depends on the arguments of  Section \ref{sc_high}.

We will encounter $L^2$ spaces on various submanifolds of $\R^d$.
We always consider them with respect to the ``natural" measure, i.e.
which is invariant under isometries.
When the manifold is compact we normalize the measure to be probability.

Throughout the paper the letters $c,C$ and various subscripted versions
refer to constants and parameters.
The same symbol occurring in different places need not have the same
value unless the contrary is explicitly stated.
For convenience, we use lower case for constants which are best
thought of to
be small and upper case for those which are best
thought of to be large.
In addition, we occasionally use Landau's $O$ and $o$ notation.

The organization of the rest  of the paper that we have not explained
yet is as follows:
In Section \ref{sc_proof}, we combine the estimates of Section
\ref{sc_low} and \ref{sc_high} to conclude Theorem \ref{th_main}.
In Section \ref{sc_Pcentral} we derive Theorem \ref{th_central}
as a corollary of the results in Section
\ref{sc_low}.
Finally, in Section \ref{sc_Plocal} we prove Theorem \ref{th_local}.
When $K$ is Abelian and its action on $\R^d$ has a trivial component
some additional difficulties arise which prevents us from using the
method of Guivarc'h \cite{Gui-uniform}.
To address these issues,
we use Taylor expansions in Section \ref{sc_smallSSA}  motivated by
Tutubalin's paper.
For this argument we need to assume the finiteness of high order
moments.

\section{Justifying the simplifying assumptions}
\label{sc_O}

We prove three technical Lemmata in this section that we referred to in
the previous section.
Their common feature is that they allow us to make certain
simplifying assumptions on the law $\mu$ generating the random walk
without loss of generality.

First we prove that there is a suitable choice of origin for the coordinate system,
so that assumption $(C)$ is satisfied.
Then we prove that our assumption that $K$ is normalized by $\t_0$ is justified if
we replace $\mu$ by a convolution power of itself.
Finally we prove that the same replacement allows us to assume that $\mu$ is almost
non-degenerate.

\begin{lem}\label{lm_O}
Assume that there is no point in $\R^d$ except for the origin
which is fixed by all elements of $K$.
Then there is a unique point $x\in \R^d$ such that
\[
\int \g(x)d\mu(\g)=x.
\]
\end{lem}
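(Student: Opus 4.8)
The plan is to turn the fixed point equation into linear algebra. Writing $\g(x)=v(\g)+\t(\g)x$ and setting
\[
\bar v:=\int v(\g)\,d\mu(\g),\qquad T:=\int\t(\g)\,d\mu(\g)
\]
(the first integral converges since $\mu$ has finite first moment, a consequence of our standing finite second moment assumption; the second since the integrand is bounded), the equation $\int\g(x)\,d\mu(\g)=x$ is equivalent to $(I-T)x=\bar v$. Hence both the existence and the uniqueness of $x$ follow once $I-T$ is shown to be invertible, i.e.\ once we know that $1$ is not an eigenvalue of $T$; then $x=(I-T)^{-1}\bar v$ is the unique solution.

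The heart of the argument is to rule out $1$ as an eigenvalue of $T$. Since $T$ is an average of orthogonal matrices, $\|T\|\le1$. Suppose $Tx=x$ for some $x\neq0$; taking the inner product with $x$ gives
\[
|x|^2=\langle Tx,x\rangle=\int\langle\t(\g)x,x\rangle\,d\mu(\g).
\]
By Cauchy--Schwarz, $\langle\t(\g)x,x\rangle\le|x|^2$ for every $\g$, with equality only when $\t(\g)x=x$. Therefore the nonnegative continuous function $\g\mapsto|\t(\g)x-x|^2$ has vanishing $\mu$-integral, so its (open) positivity set is disjoint from $\supp\mu$; that is, $\t(\g)x=x$ for every $\g\in\supp\mu$.

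It remains to propagate this to $K$. Every element of $\supp\t(\widetilde\mu*\mu)$ is a limit of elements of the form $\t(\g_1)^{-1}\t(\g_2)$ with $\g_1,\g_2\in\supp\mu$, and each such element fixes $x$; since the stabilizer of $x$ in $\O(d)$ is a closed subgroup, it contains $\supp\t(\widetilde\mu*\mu)$, hence the group generated by it, hence its closure $K$. By hypothesis on $K$ this forces $x=0$, a contradiction, so $I-T$ is invertible and the lemma follows. I do not expect a genuine obstacle here; the only points needing a word of care are the two ``support'' passages (from $\mu$-almost every $\g$ to all of $\supp\mu$, and from a generating set to the closed group $K$), both routine. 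It is worth noting that this lemma is exactly what lets one normalize the origin so that $(C)$ holds: translating the origin to the point $x$ replaces each $v(\g)$ by $\g(x)-x$, whose $\mu$-average vanishes, while leaving the rotation parts $\t(\g)$ unchanged.
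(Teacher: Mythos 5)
Your proof is correct and follows essentially the same route as the paper: reduce the fixed-point equation to the linear equation $(I-T)x=\bar v$ and show $I-T$ is invertible by observing that $Tx=x$ forces $\t(\g)x=x$ for $\mu$-a.e.\ $\g$ (since $T$ averages orthogonal matrices), whence $x$ is fixed by $K$ and so $x=0$. Your explicit propagation from $\supp\mu$ to the closed group $K$ is a detail the paper leaves implicit, but the argument is the same.
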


The conclusion implies that if we change our coordinate system,
and set $x$ to be the origin, then $(C)$ is satisfied.

\begin{proof}
Consider the map $\R^d\to\R^d$:
\[
T(x)=\int \g(x)d\mu(\g)-\int \g(0)d\mu(\g)=\int \t(\g)x d\mu(\g).
\]
It is clear that $T$ is a linear transformation.

We show that $x-T(x)$ has trivial kernel.
Suppose that $x=T(x)$ for some $x\in\R^d$.
Since $|\t(\g)x|=|x|$ for all $\g$, and $T(x)$
is the average of these points, we must have
$\t(\g)x=x$, for $\mu$-almost all $\g$.
By our assumption, $x=0$, hence the kernel of $x-T(x)$
is indeed trivial.

Therefore, there is a unique point $x$ such that
$x-T(x)=\int\g(0)d\mu(\g)$, and this is exactly
what we wanted to prove.
\end{proof}

\begin{lem}\label{lm_normalize}
Let $K< \O(d)$ be a compact group, and $\t_0\in \O(d)$.
There is a positive integer $l$ such that $\t_0$ normalizes the
group generated by
\[
K\cup\t_0^{-1}K\t_0\cup\ldots\cup\t_0^{-(l-1)}K\t_0^{l-1}
\]
\end{lem}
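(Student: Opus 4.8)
The plan is to exploit the fact that a compact subgroup of $\O(d)$ cannot properly contain a conjugate of itself, together with the descending chain condition that comes from dimension and number of connected components. Given $K$ and $\t_0$, set $K_l$ to be the group generated by $K\cup\t_0^{-1}K\t_0\cup\ldots\cup\t_0^{-(l-1)}K\t_0^{l-1}$. Each $K_l$ is a compact subgroup of $\O(d)$ (the group generated by finitely many compact subgroups of a compact group is again compact, since it is closed: one can see it as the closure of an increasing union of compact sets, all contained in the compact $\O(d)$, and it is a subgroup). Clearly $K_1\subset K_2\subset\cdots$ and $\t_0^{-1}K_l\t_0\subset K_{l+1}$.

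First I would observe that the chain $K_1\subset K_2\subset\cdots$ of compact subgroups of $\O(d)$ must stabilize: the dimension $\dim K_l$ is non-decreasing and bounded by $\dim\O(d)$, and once the dimension is constant the identity component $K_l^\circ$ is constant (a closed connected subgroup of the next one of the same dimension must coincide with it); then the number of connected components $[K_l:K_l^\circ]$ is non-decreasing, and it is bounded --- here I would use that all the $K_l$ lie between $K_N^\circ$ and $\O(d)$ for $N$ large, and $\O(d)/K_N^\circ$ is a compact manifold, so a discrete subgroup of the component group, or more simply that an increasing chain of compact subgroups of a Lie group with fixed identity component has bounded, hence eventually constant, component count. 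Thus there is $l$ with $K_l=K_{l+1}$.

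For such an $l$ I claim $\t_0$ normalizes $K_l$. Indeed $\t_0^{-1}K_l\t_0\subset K_{l+1}=K_l$, so $\t_0^{-1}K_l\t_0$ is a compact subgroup of $K_l$. But $\t_0^{-1}K_l\t_0$ is isomorphic to $K_l$ as a topological group, hence has the same dimension and the same number of connected components as $K_l$; a compact (closed) subgroup of $K_l$ with the same dimension and the same component count as $K_l$ must equal $K_l$ (same dimension forces the identity components to agree, then same finite index forces equality). Therefore $\t_0^{-1}K_l\t_0=K_l$, i.e.\ $\t_0$ normalizes $K_l$, which is the assertion.

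The main obstacle is making the stabilization argument fully rigorous: one has to handle both the continuous invariant (dimension of the identity component) and the discrete invariant (the component group) simultaneously, and argue that an increasing chain of compact subgroups of $\O(d)$ cannot grow forever in either. This is where a little care with the structure theory of compact Lie groups is needed, but no hard input is required --- it is the standard fact that $\O(d)$ satisfies the ascending chain condition on closed subgroups, which follows from Noetherianity of its lattice of closed subgroups (finitely many possibilities for $K_l^\circ$ up to conjugacy and the bound on component counts). Everything else --- compactness of the generated groups, the containments $\t_0^{-1}K_l\t_0\subset K_{l+1}$, and the final equality argument --- is routine.
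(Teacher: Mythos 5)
Your reduction of the lemma to two claims is fine: (i) the chain $K_1\subset K_2\subset\cdots$ of compact groups stabilizes, and (ii) once $\t_0^{-1}K_l\t_0\subset K_l$, equality follows because a closed subgroup of a compact Lie group that is isomorphic to it (same dimension, same number of components) must be the whole group. Step (ii) is correct and is indeed how one passes from stabilization to normalization. The problem is step (i), which is where essentially all of the content of the lemma lies, and your justification for it is false. There is no ascending chain condition for closed subgroups of $\O(d)$: the cyclic groups $\Z/2^n\Z\subset\SO(2)$ form an infinite strictly increasing chain of compact subgroups, all with the same (trivial) identity component, and with unbounded component count. So the assertion that ``an increasing chain of compact subgroups of a Lie group with fixed identity component has bounded, hence eventually constant, component count'' is simply wrong, and the alternative remark about $\O(d)/K_N^\circ$ does not repair it ($K_N^\circ$ need not be normal in $\O(d)$, and even inside the normalizer the same roots-of-unity phenomenon occurs whenever there is a torus factor). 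Your argument correctly shows that $\dim K_l$ and hence $K_l^\circ$ stabilize, but after that point the component groups $K_l/K_{l_0}^\circ$ could a priori grow forever, exactly as in the $\SO(2)$ example; nothing in your proposal rules this out.

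Ruling it out is precisely what the paper's proof does, and it requires the specific structure of the chain (each $K_{l+1}$ is generated by $K_l$ and its $\t_0$-conjugate) together with nontrivial input: a Jordan-type theorem for compact groups (the reference [Bas]) providing a normal subgroup $H\lhd K$ with $K^\circ\le H$, $H/K^\circ$ abelian and $[K:H]$ bounded by a constant $C_d$. Assuming the chain does not stabilize, the paper passes to the closure $L$ of the union, shows $L^\circ/K^\circ_{l_0}$ is abelian using that commutators of $C_d!$-th powers lie in $K^\circ_{l_0}$, and then proves by a rewriting argument that the exponent of $H_l/K^\circ_{l_0}$ (where $H_l=K_l\cap L^\circ$) is eventually constant in $l$; this contradicts the fact that elements of $H_l$ approximate the positive-dimensional group $L^\circ\neq K^\circ_{l_0}$. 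So the missing idea in your proposal is not a technicality: without something like the Jordan/Bass theorem and the exponent argument (or another mechanism exploiting that the $K_l$ are generated by $\t_0$-conjugates of a single compact group), the stabilization claim you rely on is unsupported, and as a general statement about compact subgroups it is false.
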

\begin{proof}
It is a well-known fact that if $K$ is a compact Lie group,
then there is a
chain of normal subgroups $K^{\circ}\lhd H\lhd K$ such that
$H/K^{\circ}$ is commutative and $[K:H]<C_d$ for a constant $C_d$
depending on $d$.
For a proof in the context of algebraic groups which carries over
to compact groups without any changes see \cite[Theorem J]{Bas-Jordan-Burnside}.

Write $K_l$ for the closure of the group generated by
\[
K\cup\t_0^{-1}K\t_0\cup\ldots\cup\t_0^{-(l-1)}K\t_0^{l-1}
\]
and write $K_l^{\circ}\lhd K_l$ for its connected component.

Let $l_0\ge 1$ be an integer such that $K^{\circ}_l=K^{\circ}_{l_0}$
for $l\ge l_0$.
(The sequence $K_l^\circ$ stabilizes, since $\dim K^{\circ}_l$ may
grow at most finitely many times.)
Denote by $L$ the closure of the union of the groups $K_l$.
Then $K^\circ_{l_0}\lhd L$ since $K^\circ_{l_0}$ is normal in all
$K_l$ for $l\ge l_0$.

We show that $L^\circ/K^\circ_{l_0}$ is commutative.
For any $l\ge l_0$ and $g,h\in K_l$, we have
\[
[g^{C_d!},h^{C_d!}]\in K^{\circ}_{l_0}
\]
hence this property descends to $L$.
Since all elements in a connected compact Lie group are $C_d!$ powers,
we have $[L^\circ,L^\circ]<K^\circ_{l_0}$, thus
$L^\circ/K^\circ_{l_0}$ is indeed commutative.
Note that $L$ and $L^\circ$ are both normalized by $\t_0$ which is of
crucial importance for what follows.

Write $H_l=K_l\cap L^\circ$.
Then clearly $K^\circ_l\lhd H_l\lhd K_l$, $[K_l:H_l]\le[L:L^\circ]$
and $H_l/K_l^\circ$ is commutative for $l\ge l_0$.
Let $l_1\ge l_0$ be such that $[K_{l_1}:H_{l_1}]=[L:L^\circ]$
and let $g_1,\ldots,g_m$ be a system of representatives for $H_{l_1}$
cosets in $K_{l_1}$.

We show that $\exp(H_l/K^\circ_{l_0})$ is constant for $l\ge l_1+1$.
The {\em exponent} $\exp(G)$ of a group $G$ is the smallest integer $n$
such that $g^n=1$ for all $g\in G$.
Since the elements of $H_l$ approximate those of $L^\circ$, this would imply
that $\exp(L^\circ/K^\circ_{l_0})<\infty$.
Then $L^\circ=K^\circ_{l_0}$, as both of them are connected Lie groups.
Thus $H_l=K^\circ_{l_0}$ for all $l\ge l_0$, and $K_l=\{g_1,\ldots,g_m\}K^{\circ}_{l_0}$
for $l\ge l_1$.
That is, the sequence $K_l$ stabilizes, which was to be proved.

Let $l\ge l_1+1$.
Then all elements of $K_{l+1}$ are of the form
\[
g=\prod_\a \g_\a^{-1}(g_{i_\a}h_{i_\a})\g_\a,
\]
where $h_{i_\a}\in H_l$ and $\g_\a\in\{1,\t_0\}$.
For each $\a$, we can write
\[
\g_\a^{-1}(g_{i_\a}h_{i_\a})\g_\a=
g_{j_\a}h_{j_\a}\g_\a^{-1}h_{i_\a}\g_\a,
\]
where $g_{j_\a}$ is the appropriate coset representative and
\[
h_{j_\a}=g_{j_\a}^{-1}\g_\a^{-1}g_{i_\a}\g_\a\in H_{l_1+1}<H_l.
\]

We bring all $g_{j_\a}$ to the left hand side
of the product and get that each element of $H_{l+1}$ is of the form
\[
h=\prod_\b \g_\b^{-1}h_\b\g_\b,
\]
where $h_\b\in H_l$ and $\g_\b\in\{1,\t_0\}\cdot K_{l_1}$.
Thus all $\g_\b^{-1}h_\b\g_\b$ are in $L^\circ$,
in particular they commute modulo $K_{l_0}^{\circ}$.
In addition, the degree of each
$h_\b\cdot K_{l_0}^{\circ}\in H_l/K_{l_0}^{\circ}$ divides
$\exp(H_l/K_{l_0}^{\circ})$, hence
so is the degree of $h\cdot K_{l_0}^{\circ}\in H_{l+1}/K_{l_0}^{\circ}$.
This implies that $\exp(H_{l+1}/K_{l_0}^{\circ})=\exp(H_l/K_{l_0}^{\circ})$
which was to be proved.
\end{proof}

\begin{lem}\label{lm_almostnd}
Suppose that $\mu$ is non-degenerate.
Then there is a positive integer $l$ such that
$\mu^{*(l)}$ is almost non-degenerate.
\end{lem}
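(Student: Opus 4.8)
The plan is to prove the contrapositive: assuming $\mu^{*(l)}$ fails to be almost non-degenerate for every $l\ge1$, I will produce a proper closed set $A\subset\R^d$ and an isometry $\g$ with $\mu^{*(l)}.\d_{x_0}$ supported in $\g^l(A)$ for all $l$. For $x\in\R^d$ and $l\ge0$, let $A_l(x)$ be the affine hull of $\{\g(x):\g\in\supp\mu^{*(l)}\}$, and set $B_l=\{x\in\R^d:\dim A_l(x)<d\}$; thus $\mu^{*(l)}$ is almost non-degenerate precisely when $B_l=\emptyset$, so by hypothesis every $B_l$ is nonempty. First I would record, from the decompositions $\mu^{*(l+1)}=\mu*\mu^{*(l)}=\mu^{*(l)}*\mu$, that for every $\eta\in\supp\mu$ one has $A_{l+1}(x)\supseteq\eta\bigl(A_l(x)\bigr)$ and $A_{l+1}(x)\supseteq A_l(\eta x)$ (because $\eta\g$ and $\g\eta$ lie in $\supp\mu^{*(l+1)}$ for $\g\in\supp\mu^{*(l)}$, and $(\eta\g)(x)=\eta(\g(x))$, $(\g\eta)(x)=\g(\eta x)$). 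Since $\eta$ is an isometry, the first relation makes $\dim A_l(x)$ non-decreasing in $l$; together the two relations give $B_{l+1}\subseteq B_l$ and, more sharply, $x\in B_{l+1}\Rightarrow\eta x\in B_l$ for all $\eta\in\supp\mu$.

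Next I would show each $B_l$ is Zariski-closed. A set of points in $\R^d$ has affine hull of dimension $<d$ iff every $(d+1)$ of its points are affinely dependent, so $B_l$ is the common zero locus of the polynomials $x\mapsto\det\bigl[\g_1(x)-\g_0(x)\ \big|\ \cdots\ \big|\ \g_d(x)-\g_0(x)\bigr]$ over all $(d+1)$-tuples $\g_0,\dots,\g_d\in\supp\mu^{*(l)}$, each of degree $\le d$ in $x$; by Noetherianity of $\R[x_1,\dots,x_d]$ finitely many already cut out $B_l$. Hence $(B_l)_{l\ge1}$ is a decreasing chain of Zariski-closed sets, so the descending chain condition forces $B_l=B_\infty$ for all large $l$, where $B_\infty:=\bigcap_l B_l\ne\emptyset$. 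Combining stabilization with $x\in B_{l+1}\Rightarrow\eta x\in B_l$, the set $B_\infty$ is invariant under every $\eta\in\supp\mu$, hence under the semigroup generated by $\supp\mu$; in particular a random walk started at any point of $B_\infty$ stays a.s. in $B_\infty$.

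Finally I would split on whether $x_0\in B_\infty$. If $x_0\in B_\infty$, then $\dim A_l(x_0)<d$ for all $l$ and, being non-decreasing, this dimension stabilizes at some $m<d$ for $l\ge l_0$; then $A_{l+1}(x_0)\supseteq\eta A_l(x_0)$ is an inclusion of affine subspaces of equal dimension $m$, so $A_{l+1}(x_0)=\eta A_l(x_0)$ for every $\eta\in\supp\mu$, and iterating with the fixed $\g_0\in\supp\mu$ gives $A_l(x_0)\subseteq\g_0^{\,l}\bigl(\g_0^{-l_0}A_{l_0}(x_0)\bigr)$ for all $l$; since $Y_l\in A_l(x_0)$ a.s., this is degeneracy with drift $\g_0$ and $A=\g_0^{-l_0}A_{l_0}(x_0)$. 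If instead $x_0\notin B_\infty$, then $B_\infty$ is a \emph{proper}, nonempty, Zariski-closed, semigroup-invariant set; fixing $x^*\in B_\infty$, the walk from $x^*$ stays a.s. in $B_\infty$, while the walk from $x_0$ driven by the same isometries differs from it by $\t(\g_l\cdots\g_1)(x_0-x^*)$, a vector of norm $|x_0-x^*|$, so the walk from $x_0$ stays a.s. in $A=B_\infty+\overline{B(0,|x_0-x^*|)}$, which is closed and (a proper real algebraic set not being relatively dense) proper. Either way $\mu$ is degenerate, a contradiction.

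The steps I expect to need the most care are the two "tameness" inputs that make this soft argument close: that $B_l$ is genuinely algebraic, so that the descending chain condition — not mere closedness, under which a decreasing sequence need not stabilize — yields a stable $B_\infty$; and, in the second case, that fattening the proper algebraic set $B_\infty$ by a fixed Minkowski ball leaves it proper (again using that $B_\infty$ is algebraic, hence of dimension $<d$, rather than just closed of measure zero). Everything else is bookkeeping with the two inclusions $A_{l+1}(x)\supseteq\eta A_l(x)$ and $A_{l+1}(x)\supseteq A_l(\eta x)$; the transfer in the second case is also what frees the conclusion from the particular base point $x_0$.
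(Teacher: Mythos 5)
Your proof is correct and follows essentially the same route as the paper's: the same stabilization-of-affine-hulls argument (maximal dimension forces $A_{l+1}(x)=\eta A_l(x)$, giving confinement to $\g_0^l$-translates of a proper affine subspace) and the same algebraicity/Noetherian argument for uniformity in the base point, merely packaged contrapositively through the decreasing chain $B_l$. The only extra ingredient is your explicit transfer to the starting point $x_0$ in the second case, which rests on the (true, but asserted rather than proved) fact that a proper real algebraic set is not relatively dense in $\R^d$; the paper avoids this by applying non-degeneracy directly at the point $x$ in question.
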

\begin{proof}
By the non-degeneracy assumption, it follows that for each
point $x\in\R^d$, there is $l(x)$ such that the set
$\{\g(x):\g\in\supp \mu^{*(l(x))}\}$ is not
contained in a proper affine subspace.
Indeed, assume to the contrary that this fails, and $l_0$ and $W$
are such that $\{\g(x):\g\in\supp \mu^{*(l_0)}\}$ spans $W$ and $W$
is of largest possible dimension.
Then $\g(x)$ is $d\mu^{*(l)}(\g)$-almost surely contained in
$\g_0^{l-l_0}(W)$, where $\g_0\in\supp \mu$ is arbitrary.
This contradicts to the non-degeneracy of $\mu$.

It is left to show that $l(x)$ is bounded on $\R^{d}$.
It is easy to see that $\{x:l(x)\le L\}$ is a Zariski open set
for every $L\in\Z$.
As $L\to \infty$ this is an ascending chain which eventually
covers $\R^d$.
Therefore the claim follows from the Noetherian property of Zariski
open sets.
\end{proof}

\section{Estimates for high frequencies}
\label{sc_high}

The goal of this section is to estimate the norm of the operator
$S_r$ defined in Section \ref{sc_nota}.
We are not able to show that $\|S_r\|<1$, but we can give an
estimate for $\|S_r\f\|_2$ in terms of the following Lipschitz
type norm of $\f$:
\[
\|\f\|_{\Lip(K)}:=\|\f\|_\infty+
\sup_{\xi\in S^{d-1},\t\in K\backslash\{1\}}\frac{|\f(\xi)-\f(\t(\xi))|}{\dist(1,\t)},
\]
where $\dist(\cdot,\cdot)$ is a distance function on $K$, which is induced by the
invariant Riemannian metric on $K$.
Note that there is a constant $C$ depending on the geometry of the embedding of $K$ inside
$\O(d)$ such that $|\xi-\t(\xi)|\le C\dist(1,\t)$ for every $\xi$ and $\t$.
Thus $\|\f\|_{\Lip(K)}\le C \|\f\|_{\Lip}$ for any function, where $\|\cdot\|_{\Lip}$
is the ordinary Lipschitz norm on the sphere.

\begin{prp}
\label{pr_high}
Suppose that $\mu$ is almost non-degenerate, has finite moments of order $2$,
and satisfies $(SSR)$.
Then there is a constant $c>0$ depending only on $\mu$ such that
the following hold.
Let $\f\in L^2(S^{d-1})$ with $\|\f\|_2=1$.
Then
\be\label{eq_prhigh}
\|S_r\f\|_2\le1-c\min\left\{r^2,\frac{1}{\log^3((r+1)\|\f\|_{\Lip(K)}+2)}\right\}.
\ee
\end{prp}

This estimate allows us to control the Fourier transform of the
random walk in the frequency range $e^{cl^{1/4}}>r>l^{-1/2}\log l$.

Mixing properties of random walks on semi-simple compact Lie groups
is a crucial ingredient of our proof.
We state the result that we use in the next theorem.
The proof is given in the paper \cite[Corollary 7]{Var-compact}.
A quantitatively weaker version, but
essentially sufficient for our purpose could be deduced
form the Solovay-Kitaev algorithm,
at least in the case $K=\SU(d)$.
The Solovay-Kitaev algorithm was first described in an e-mail discussion
list by Solovay in 1995.
Kitaev independently discovered it and published it in 1997 \cite{Kit-Solovay-Kitaev}.
For a recent exposition see \cite{DN-Solovay-Kitaev}.
See also the paper of Dolgopyat \cite[Theorems A.2 and A.3]{Dol-mixing},
which provides similar estimates.

\begin{thma}
Let $K$ be a compact Lie group with semi-simple connected
component.
Let $\mu$ be a symmetric probability measure on $K$ such that
$\supp\mu$ generates a dense subgroup in $K$.
Then there is a constant $c>0$ depending only on $\mu$ such that
the following hold.
Let $\f\in L^2(K)$ be a function such that $\|\f\|_2=1$
and $\int \f dm_K=0$.
Then
\be\label{eq_tha}
\left\|\int\f(\t^{-1}\s)d\mu(\t)\right\|_2<1-\frac{c}{\log^2(\|\f\|_\Lip+2)}.
\ee
\end{thma}

Recall that $m_K$ denotes the Haar measure on $K$.

As we mentioned in the introduction,
Conze and Guivarc'h \cite{CG-spectralgap} proved the Ratio Limit Theorem under a certain
assumption.
This assumption is that $K=\SO(d)$, and $\t(\mu)$ satisfies
(\ref{eq_tha}) with $1-c$ on the right independently of $\f$.
We add that Bourgain and Gamburd \cite{BG-SU2}, \cite{BG-SUd} proved
(in the $K=\SU(d)$ case)
that if $\mu$ satisfies some additional conditions (e.g. the support of
$\mu$ consists of matrices with algebraic entries)
then the stronger version of (\ref{eq_tha}) needed by Conze and Guivarc'h holds.

If one improves the estimate in Theorem A, then our argument presented
below provides better estimates in Proposition \ref{pr_high} and Theorem
\ref{th_main}.
In particular, if one can replace the right hand side of (\ref{eq_tha})
with $1-c\log^{-A}(\|\f\|_\Lip+2)$, then one can write
$1- c\min\{r^2,\log^{-A-1}((1+r)\|\f\|_{\Lip(K)}+2)\}$ on the right hand side
of (\ref{eq_prhigh}) and $O(e^{-cl^{1/(A+2)}})\|f\|_W$ instead of the
second error term in (\ref{eq_thmain}).
In fact, Theorem A is proved with better bounds for most Lie groups; except
for those which project onto $\SO(3)$.
For details, we refer to \cite{Var-compact}.
Moreover, for certain generators (e.g. when they are given with algebraic
entries), the estimates are available even with $A=0$,
as we discussed above.

The rest of the section is devoted to the proof of Proposition \ref{pr_high}.
A simple observation shows that it is enough to prove
it for symmetric measures.
Indeed, we have
\be\label{eq_symmetrize}
\|S_r\f\|_2^2=\langle S_r\f, S_r\f\rangle=
\langle \f, S_r^*S_r\f\rangle\le\|S_r^*S_r\f\|_2
\ee
and $S_r^*S_r$ is the operator analogous to $S_r$
corresponding to the symmetric measure $\wt\mu*\mu$.

We check that the assumptions of Proposition \ref{pr_high} hold for $\wt\mu*\mu$
if they hold for $\mu$.
Since $1\in\supp(\t(\wt\mu*\mu))$, $\supp(\t(\wt\mu*\mu))\subset\supp(\t((\wt\mu*\mu)^{*(2)}))$,
hence $\supp(\t((\wt\mu*\mu)^{*(2)}))$ generates a dense subgroup of $K$,
so $(SSR)$ holds for $\wt\mu*\mu$.
We have
\begin{align*}
\int |v(\g_1\cdot \g_2)|^2 d\wt\mu(\g_1)\mu(\g_2)
&\le\int |v(\g_1)+ v(\g_2)|^2 d\wt\mu(\g_1)\mu(\g_2)\\
&\le2\int |v(\g_1)|^2+ |v(\g_2)|^2 d\wt\mu(\g_1)\mu(\g_2)\le\infty,
\end{align*}
so $\wt\mu*\mu$ has finite second moments, too.
Let $\g_1\in\supp(\wt\mu)$ be arbitrary.
Then for any point $x\in\R^d$, the set
\[
\{\g(x):\g\in\supp(\wt\mu*\mu)\}\supset\{\g_1(\g(x)):\g\in\supp(\mu)\}
\]
cannot be contained in a proper affine subspace.
Thus $\wt\mu*\mu$ is also almost non-degenerate.
For the rest of the section, we write $\mu$ for $\wt\mu*\mu$ and $S_r$ for $S_r^*S_r$.
In addition, this argument shows that we can assume that $S_r$
is non-negative.

By Lemma \ref{lm_O}, we can change the origin in such a way that \label{pg_high}
$(C)$ holds for $\mu$.
Denote by $u$ the new origin in the old coordinate system.
Then the isometry $(v,\t)$ becomes $(v-u+\t u,\t)$ in the new coordinates.
Hence the operator $S_r$ will be replaced by the operator
\begin{align*}
S_r'\f(\xi)&=\int e (r\langle v(\g)-u+\t(\g)u,\xi\rangle)\f(\t(\g)^{-1}\xi) d\mu(\g)\\
&=e (r\langle-u,\xi\rangle)S_r(e (r\langle u,\xi\rangle)\f(\xi)).
\end{align*}
By setting $\f'(\xi)=e(r\langle u,\xi\rangle)\f(\xi)$,
we see that $\|S_r\f\|_2=\|S_r'\f'\|_2$.
Note that
\[
\|e(r\langle u,\xi\rangle)\f(\xi)\|_{\Lip(K)}\le C((r+1)\|\f\|_\infty+\|\f\|_{\Lip(K)})
\le C(r+1)\|\f\|_{\Lip(K)},
\]
where $C$ is a constant depending only on $u$.
This shows that if Proposition \ref{pr_high} holds for $S_r'$ and $\f'$,
then it also holds for $S_r$ and $\f$.

From now on, until the end of the section, we assume that $\mu$ is
symmetric, almost non-degenerate, has finite second moments
and satisfies $(C)$ and $(SSR)$.
Moreover, we assume that $S_r$ is selfadjoint and non-negative.
By the above discussion, these assumptions are justified.

Until the end of the section, we fix $r>0$ and a function
$\f\in\Lip(S^{d-1})$ and prove Proposition \ref{pr_high} for these.
The strategy of the proof is the following.
We fix two integers
\[
l_1=[C_1(r^{-2}+\log^3(\|\f\|_{\Lip(K)}+2))],\quad
l_2=[C_2(r^{-2}+\log^3(\|\f\|_{\Lip(K)}+2))],
\]
where
$C_1,C_2$ are suitably chosen large constants depending on $\mu$
but not on $\f$ or $r$.
We will show that the set of isometries that almost fix $\f$ in the $\rho_r$
representation is of $\mu^{*(l)}$ measure at most $9/10$ for $l=l_1$
or $l=l_2$.
This implies Proposition \ref{pr_high} by a standard argument.
More precisely, we prove the following lemma:

\begin{lem}\label{lm_high2}
Define the set
\[
B(\e):=\{\g\in\Isom(\R^d):\|\rho_r(\g)\f-\f\|_2<\e\}.
\]
If $\e$  is sufficiently small depending on $\mu$,
and $C_1$ is sufficiently large depending on $\e$ and $\mu$,
and $C_2$ is sufficiently large depending on $C_1$, $\e$ and $\mu$, then 
\be\label{eq_contra}
\mu^{*(l_i)}(B(\e))<9/10
\ee
holds for $i=1$ or $i=2$.
\end{lem}

We show how to deduce Proposition \ref{pr_high} from Lemma \ref{lm_high2}.
\begin{proof}[Proof of Proposition \ref{pr_high}]
Let $l_i$ be the one for which \eqref{eq_contra} holds and
assume that $l_i$ is even for simplicity.
Then we can write
\begin{align*}
\|&S_r^{l_i/2}(\f)\|_2^2=\langle S_r^{l_i/2}(\f),S_r^{l_i/2}(\f)\rangle
=\langle S_r^{l_i}(\f),\f\rangle\\
&\;=\int\langle\rho_r(\g)\f,\f\rangle d\mu^{*(l_i)}(\g)
\le\frac{1}{2}\int\langle\rho_r(\g)\f+\rho_r(\g^{-1})\f,\f\rangle
d\mu^{*(l_i)}(\g)\\
&\;\le\frac{1}{2}\int_{\Isom(\R^d)\sm B(\e)}
\langle\rho_r(\g)\f+\rho_r(\g^{-1})\f,\f\rangle
d\mu^{*(l_i)}(\g)+\mu^{*(l_i)}(B(\e))\\
&\;\le(1-\e^2/2)/10+9/10.
\end{align*}
To deduce the last inequality, we used the identity
\[
\langle\rho_r(\g)\f+\rho_r(\g^{-1})\f,\f\rangle=2-\|\rho_r(\g)\f-\f\|_2^2.
\]

We concluded that $\|S_r^{l_i/2}(\f)\|_2\le e^{-c}$
for some $c>0$ depending on $\mu$.
By selfadjointness of $S_r$ we can deduce that
$\|S_r(\f)\|_2\le e^{-2c/l_i}$.
We compare this with the definition of $l_i$,
which finishes the proof.
\end{proof}

The rest of the section is devoted to the proof of Lemma \ref{lm_high2}.
We begin with a simple lemma which shows that the length
of the translation part of $\g$ is proportional to $\sqrt{l}$ with $\mu^{*(l)}$ probability
at least $9/10$.

\begin{lem}
\label{lm_uplength}
\[
\int|v(\g)|^2d\mu^{*(l)}(\g)=l\cdot\int|v(\g)|^2d\mu(\g).
\]
\end{lem}
\begin{proof}
For $l=1$ the statement is obvious, for $l$ larger the
proof is by induction:
\begin{align*}
\int|&v(\g)|^2d\mu^{*(l+1)}(\g)=
\iint|v(\g_1\g_2)|^2d\mu(\g_2)d\mu^{*(l)}(\g_1)\\
&=\iint|v(\g_1)+\t(\g_1)v(\g_2)|^2d\mu(\g_2)d\mu^{*(l)}(\g_1)\\
&=\iint|\t(\g_1^{-1})v(\g_1)+v(\g_2)|^2d\mu(\g_2)d\mu^{*(l)}(\g_1)\\
&=\iint\left(|v(\g_1)|^2+|v(\g_2)|^2
+2\langle\t(\g_1^{-1})v(\g_1),v(\g_2)\rangle\right)d\mu(\g_2)
d\mu^{*(l)}(\g_1).
\end{align*}
Integrating out $\g_2$, the third term in the last line vanishes
by $(C)$.
This proves the lemma by induction.
\end{proof}

For the rest of the section, we assume to the contrary that \eqref{eq_contra}
fails, and we proceed by various Lemmata which show under the indirect hypothesis
that $B(\e')$ contains larger and larger families of isometries if $\e'$ becomes larger and larger
(but still small).
We will reach contradiction when we show that we can find
translations of length comparable to $r^{-1}$ in many directions.
The following simple property of the set $B(\e)$ will be used repeatedly:

\begin{lem}
For any numbers $\e_1,\e_2$ we have $B(\e_1)B(\e_2)\subset B(\e_1+\e_2)$.
\end{lem}
\begin{proof}
Let $\g_1\in B(\e_1)$ and $\g_2\in B(\e_2)$ be arbitrary.
Then by the triangle inequality, we have
\be\label{eq_triangle0}
\|\rho_r(\g_1\g_2)\f-\f\|_2\le\|\rho_r(\g_1\g_2)\f-\rho_r(\g_1)\f\|_2+\|\rho_r(\g_1)\f-\f\|_2
\ee
Since $\rho_r$ is a unitary representation, the first term on the right is equal to
$\|\rho_r(\g_2)\f-\f\|_2\le\e_2$.
Thus $\eqref{eq_triangle0}\le\e_1+\e_2$, which proves the lemma.
\end{proof}

In the first lemma, we conclude that $B(4\e)$ contains isometries
with an arbitrary prescribed rotation part and translation part
proportional to $\sqrt{l}$.

\begin{lem}\label{lm_rota}
Suppose that (\ref{eq_contra}) fails for $i=1$ and some $\e>0$.
Suppose further that $C_1$ is sufficiently large depending on $\mu$
and $\e$.
Then there exist a constant $C$ which depends only on $\mu$
such that the following holds:
There is a set $X\subset B(4\e)$ such that
\[
\t(X)= K \qquad{\rm and}\qquad
|v(\g)|<C\sqrt{l_1}\quad{\rm for}\quad \g\in X.
\]
\end{lem}
\begin{proof}
We deduce the lemma from Theorem A.
Let $\BB$ be the $\e\|\f\|_{\Lip(K)}^{-1}$ neighborhood of the
identity in $K^\circ$.
It follows from the definitions that $\|\rho_r(\t)\f-\f\|_\infty\le\e$,
for every $\t\in\BB$.
Thus we have $\BB\subset B(\e)$.

Take an approximate identity $\psi$ on $K$, which has the
following properties:
\[
\supp(\psi)\subset\BB,\quad\int\psi dm_K=1\quad{\rm and}\quad
\|\psi\|_{\Lip}\le C\|\f\|_{\Lip(K)}^{1+\dim K}.
\]
Note that these imply that
$\|\psi\|_2\le C\|\f\|_{\Lip(K)}^{(\dim K)/2}$.
These constants again depend only on $K$ and $\e$.
Now we apply Theorem A successively $l_1$ times starting
with the function $(1-\psi)/\|1-\psi\|_2$,
and get
\[
\left\|1-\int\psi(\t^{-1}\s)d\t(\mu)^{*(l_1)}(\t)\right\|_2\le\frac{1}{10}
\]
provided $C_1$ is sufficiently
large depending only on $\mu$ and $\e$.
Recall that $l_1>C_1\log^3(\|\f\|_{\Lip(K)} +2)$.
We note that taking the average of translates of $\psi$
may only decrease the Lipschitz norm.

Now let $Y\subset B(\e)$ be such that $\mu^{*(l_1)}(Y)>8/10$, and
\[
|v(\g)|<C\sqrt{l_1}/2 \quad{\rm for}\quad \g\in Y.
\]
For a sufficiently large $C$ depending on (the second moment of)
$\mu$, this is possible due to the assumption that (\ref{eq_contra}) fails and
Lemma \ref{lm_uplength}.
Denote by $\nu$ the measure we obtain from $\t(\mu^{*(l_1)})$ if we
restrict it to the set $Y$, and normalize it to get a probability
measure.
Then we have
\[
\left\|\int\psi(\t^{-1}\s)d\nu(\t)\right\|_2\le(1+\frac{1}{10})
\cdot\frac{10}{8}<\sqrt2.
\]
Thus
\[
m_K(\t(Y)\BB))\ge m_K\left(\supp\left(\int\psi(\t^{-1}\s)d\nu(\t)\right)\right)>\frac{1}{2},
\]
which proves the lemma with the choice $X=Y\BB Y\BB$.
\end{proof}

\begin{lem}
\label{lm_lowlength}
There are constants $c,C>0$ which depend only on $\mu$ such that,
we have
\[
\mu^{*(l)}(\g:|\langle v(\g),u_0\rangle|>c\sqrt{l}
\;{\rm and}\;|v(\g)|<C\sqrt{l})>1/2,
\]
for any sufficiently large (depending only on $\mu$)
integer $l$, and any $u_0\in S^{d-1}$.
\end{lem}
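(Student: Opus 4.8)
The plan is to deduce the statement from the central limit theorem (Theorem~\ref{th_central}) together with a lower bound for the variance of $Y_l$ that is uniform in the direction. First observe that when the walk starts from $x_0=0$ we have $Y_l=\g(0)=v(\g)$ with $\g=X_l\cdots X_1$ of law $\mu^{*(l)}$, so the claim is equivalent to: for all large $l$ and every $u_0\in S^{d-1}$, the event $\{|\langle Y_l,u_0\rangle|>c\sqrt l\}\cap\{|Y_l|<C\sqrt l\}$ (for the walk from the origin) has probability $>1/2$. I would prove this by showing that the law of $Y_l/\sqrt l$ converges to a \emph{non-degenerate} Gaussian, with enough uniformity in $u_0$.

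The variance lower bound goes as follows. By almost non-degeneracy applied at the origin, $\{v(\g):\g\in\supp\mu\}$ is not contained in any hyperplane through $0$, hence $\int\langle v(\g),u\rangle^2\,d\mu(\g)>0$ for every $u\in S^{d-1}$; by compactness and continuity, $\lambda_0:=\inf_{u\in S^{d-1}}\int\langle v(\g),u\rangle^2\,d\mu(\g)>0$, i.e.\ $C_0:=\int v(\g)v(\g)^{T}\,d\mu(\g)\succeq\lambda_0 I$. Condition $(C)$ gives $\E[Y_l]=0$ for every $l$ (induction using $Y_{l+1}=v(X_{l+1})+\t(X_{l+1})Y_l$ and independence of $X_{l+1}$ and $Y_l$), so the cross terms in the expansion of $\E[Y_{l+1}Y_{l+1}^{T}]$ vanish and $\mathrm{Cov}(Y_{l+1})=C_0+\int\t\,\mathrm{Cov}(Y_l)\,\t^{T}\,d\t(\mu)(\t)$. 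Since the averaging map $M\mapsto\int\t M\t^{T}\,d\t(\mu)(\t)$ preserves the order $\succeq$ and fixes $I$, an immediate induction gives $\mathrm{Cov}(Y_l)\succeq\lambda_0\,l\,I$ for all $l$.

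By the central limit theorem (Theorem~\ref{th_central}; as recalled before Proposition~\ref{pr_high}, it is available under the standing hypotheses of this section), $Y_l/\sqrt l$ converges weakly to a centered Gaussian with some covariance $\Sigma$. Since this covariance equals $\lim_l l^{-1}\mathrm{Cov}(Y_l)$ — which is what the proof of Theorem~\ref{th_central} establishes — the previous paragraph gives $\langle\Sigma u_0,u_0\rangle\ge\lambda_0$ for every $u_0\in S^{d-1}$. Making this link airtight (non-degeneracy of $\Sigma$ is not literally part of the statement of Theorem~\ref{th_central}) is the step I expect to require the most care. For the truncation, Lemma~\ref{lm_uplength} and Chebyshev's inequality give $\mu^{*(l)}(|v(\g)|\ge C\sqrt l)\le C^{-2}\int|v(\g)|^2\,d\mu(\g)$, which we make $\le 1/10$ by taking $C$ large in terms of $\mu$.

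It remains to upgrade the weak convergence to a bound that is uniform over $u_0\in S^{d-1}$. Fix a function $h:\R\to[0,1]$ with $h\equiv1$ on $[-c,c]$, $h\equiv0$ off $(-2c,2c)$, and $|h(s)-h(t)|\le(2/c)|s-t|$, and set $F_l(u):=\E\,h(\langle Y_l/\sqrt l,u\rangle)$. Writing $M=\int|v(\g)|^2\,d\mu(\g)$, Lemma~\ref{lm_uplength} gives $\E|Y_l/\sqrt l|\le\sqrt M$, hence $|F_l(u)-F_l(u')|\le(2/c)\sqrt M\,|u-u'|$; thus $\{F_l\}$ is equi-Lipschitz on the compact set $S^{d-1}$, and since $F_l$ converges pointwise (by the weak convergence, $F_\infty(u)$ being the integral of $h(\langle\,\cdot\,,u\rangle)$ against $N(0,\Sigma)$) the convergence is uniform. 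Under $N(0,\Sigma)$ the coordinate $\langle\,\cdot\,,u\rangle$ is a centered Gaussian of variance $\langle\Sigma u,u\rangle\ge\lambda_0$, so $F_\infty(u)\le\P(|\langle\,\cdot\,,u\rangle|\le2c)\le 4c/\sqrt{2\pi\lambda_0}<1/10$ once $c$ is small in terms of $\mu$. Hence for all large $l$ and every $u_0$, $\mu^{*(l)}(|\langle v(\g),u_0\rangle|\le c\sqrt l)\le F_l(u_0)<2/10$, and combining with the truncation bound yields $\mu^{*(l)}(|\langle v(\g),u_0\rangle|>c\sqrt l\ \text{and}\ |v(\g)|<C\sqrt l)>1-2/10-1/10>1/2$, as required. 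The only real obstacles are the non-degeneracy of $\Sigma$ and this uniformity in $u_0$; both are handled above, the latter via the equi-Lipschitz argument on $S^{d-1}$.
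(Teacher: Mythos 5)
Your overall route is the same as the paper's: the paper disposes of this lemma in one line, as "an easy consequence of the central limit theorem", and your proposal simply fills in the details. The truncation step (Lemma \ref{lm_uplength} plus Chebyshev) and the uniformity in $u_0$ via the equi-Lipschitz family $F_l$ are correct and are exactly the kind of routine bookkeeping the paper leaves to the reader.

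The one genuine soft spot is the step you yourself flagged: you deduce $\langle\Sigma u,u\rangle\ge\lambda_0$ from $\mathrm{Cov}(Y_l)\succeq\lambda_0\,l\,I$ by asserting $\Sigma=\lim_l l^{-1}\mathrm{Cov}(Y_l)$. Weak convergence alone does not give this: second moments are not continuous under weak convergence, and Fatou/portmanteau only yields $\langle\Sigma u,u\rangle\le\liminf_l l^{-1}\langle \mathrm{Cov}(Y_l)u,u\rangle$, which is the wrong direction (variance carried by escaping tails could in principle leave a degenerate limit), and with only finite second moments the uniform integrability of $|Y_l/\sqrt l|^2$ is not free. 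Nor is this identification "what the proof of Theorem \ref{th_central} establishes" in so many words. The fix, however, is immediate from the paper: the proof of Theorem \ref{th_central} in Section \ref{sc_Pcentral} exhibits the limit law explicitly as the Gaussian $\l$ with $\wh\l(\xi)=e^{-\Delta(\xi,\xi)}$, where Proposition \ref{pr_low} asserts that $\Delta$ is positive definite (proved there from almost non-degeneracy by essentially the same observation as your $\lambda_0>0$ computation). Quoting that instead of the covariance identification, every direction of the limit Gaussian has variance bounded below by a constant depending only on $\mu$, and the rest of your argument goes through unchanged; your covariance recursion then becomes superfluous.
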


\begin{proof}
The lemma is an easy consequence of the Central Limit Theorem,
i.e. Theorem \ref{th_central}.
\end{proof}

We could, off course, replace 1/2 in the lemma with any number less than 1.
Now we can show that under our standing assumption that
(\ref{eq_contra}) fails, $B(5\e)$ contains a nontrivial translation.

\begin{lem}
\label{lm_translation}
Suppose that $(\ref{eq_contra})$ fails with some $1/2>\e>0$ for both
$i=1$ and $i=2$.
Suppose further that $C_1$ is sufficiently large so that Lemma
\ref{lm_rota} holds and $C_2$ is sufficiently large depending on $\mu$
and $C_1$.
Then there are constants $c,C>0$ depending only on $\mu$
such that for any $u_0\in S^{d-1}$, there is an element $\g_1\in B(5\e)$
with the following properties:
\[
|v(\g_1)|<C\sqrt{l_2},\quad \langle v(\g_1),u_0\rangle>c\sqrt{l_2} \quad
{\rm and}\quad
\t(\g_1)=1.
\]
\end{lem}
\begin{proof}
Denote by $c_0$ and $C_0$ the constants from Lemma \ref{lm_lowlength}.
Using that lemma with $l=l_2$ and the failure of (\ref{eq_contra}) for $i=2$,
we find an element $\g_2\in B(\e)$ such that
\[
|\langle v(\g_2),u_0\rangle|>c_0 \sqrt{l_2}\quad
{\rm and}\quad |v(\g_2)|<C_0\sqrt {l_2}.
\]

On the other hand, applying to Lemma \ref{lm_rota}, we can find
$\g_3\in B(4\e)$ that satisfies:
\[
\t(\g_3)=\t(\g_2)^{-1} \quad {\rm and}\quad |v(\g_3)|<C'_0\sqrt{l_1},
\]
where $C'_0$ is the constant $C$ from that lemma.

We demand that $l_2/l_1=C_2/C_1$ is so large that
\[
c_0 \sqrt{l_2}>2C'_0\sqrt{l_1}.
\]
Then it is an easy calculation to verify that $\g_1=\g_2\g_3$
has the claimed properties.

\end{proof}

Recall the definition of $l_2$, in particular that it implies
$\sqrt{l_2}>\sqrt{C_2}r^{-1}$.
The next Lemma shows that we can find a translation $\g_1'$
with properties similar to that of $\g_1$ in the previous
lemma, but which is
shorter.

\begin{lem}
\label{lm_shorten}
Under the same hypothesis as in Lemma \ref{lm_translation},
there is a constant $c>0$ which depend only on $\mu$,
and there is an element $\g'_1\in B(26\e)$ with the following properties:
\[
|v(\g_1')|<r^{-1}/2,\quad \langle v(\g_1'),u_0\rangle>cr^{-1} \quad
{\rm and}\quad \t(\g_1')=1.
\]
\end{lem}
\begin{proof}
Let $\g_1\in B(5\e)$ be an isometry with the properties stated in Lemma
\ref{lm_translation}, and write $v=v(\g_1)$.
For simplicity we assume that $\langle v,u_0\rangle >0$;
the other case is similar.
By the assumption $(SSR)$, we have
\[
\int \langle\t v,u_0\rangle dm_{K^\circ}(\t)=0.
\]
Thus, there is $\t_1\in K^\circ$, such that
$\langle\t_1 v,u_0\rangle\le0$.

There is a curve $\Theta:[0,1]\to K^{\circ}$ such that
$\Theta(0)=1$ and $\Theta(1)=\t_1$, and the length of the
curve $[0,1]\to\Theta(t)v$ is less than $C|v|$,
where $C$ depends only on the embedding of $K^\circ$ to
$\O(d)$, hence on $\mu$.
Then there is a sequence of rotations
\[\s_0=1,\s_1,\s_2,\ldots,\s_N=\t_1\in K^{\circ}\]
with
$N\le 2Cr|v|+1$
such that for any $1\le i\le N$
\[
|\s_i v-\s_{i-1}v|<r^{-1}/2.
\]

By the triangle inequality, there is an index $1\le i\le N$, such that
\[
\langle \s_{i-1} v-\s_{i}v, u_0\rangle\ge
\langle v, u_0\rangle/N\ge cr^{-1}
\]
with a suitably small constant $c>0$.
($c$ depends on $C$ and the constants appearing in the previous lemma.)

Now let $g_i\in  B(4\e)$ be such that $\t(g_i)=\s_i$; such elements
can be found by virtue of Lemma \ref{lm_rota}.
The proof is finished by an easy verification of the stated properties
for the element
\[
\g_1':=g_{i-1}\g_1g_{i-1}^{-1}
g_{i}\g_1^{-1}g_{i}^{-1}.
\]
\end{proof}

\begin{proof}[Proof of Lemma \ref{lm_high2}]
We assume to the contrary that (\ref{eq_contra})
fails for both $i=1$ and $i=2$.
Let $c>0$ be the constant from Lemma \ref{lm_shorten}.
Clearly, there is a point $u_0\in S^{d-1}$, such that
\be\label{eq_concentr}
\int_{|\xi-u_0|<c/2}|\f|^2d\xi>c',
\ee
with a constant $c'>0$ that depends only on $c$ and $d$. 

By Lemma \ref{lm_shorten}, there is an element $\g_1'\in  B(26\e)$
such that $\t(\g_1')=1$ and $v':=v(\g_1')$ satisfies
$|v'|<r^{-1}/2$, and $|\langle v',u_0\rangle|>cr^{-1}$.
This leads to the inequality
\[
\|\f-\rho_r(\g_1')\f\|_2=\int_{S^{d-1}}|(1-e(r\langle\xi,v'\rangle)\f(\xi)|^2d\xi
<(26\e)^2.
\]
If $|\xi-u_0|<c/2$, then $c/2<|r\langle\xi,v'\rangle|<1/2$.
This and (\ref{eq_concentr}) gives
\[
|1-e(c/2)|^2c'<(26\e)^2,
\]
which is a contradiction if we choose $\e$
to be sufficiently small.
Since $c$ and $c'$ depends only on $\mu$,
it follows that $\e$ depends only on $\mu$.
We chose $C_1$ depending on $\mu$ and $\e$ in Lemma \ref{lm_rota}
and $C_2$ depending on $C_1$ and $\mu$ in Lemma \ref{lm_translation}.
Thus all this parameters depend only on $\mu$.
\end{proof}

\section{Estimates for low frequencies}
\label{sc_low}

We recall some of our notation: $\mu$ is a fixed
probability measure on $\Isom(\R^d)$,
$K$ is the closure of the rotation group
generated by $\supp\t(\widetilde\mu*\mu)$, and
we assume that
$\supp\t(\mu)\subset \t_0 K$, where $\t_0\in \O(d)$
is a rotation which normalizes $K$. 

Fix a point $x_0\in\R^d$, the starting point of the random walk,
and fix a real number $r\ge 0$.
Define
\[
\psi_0(\xi)=e(r\langle x_0,\xi\rangle)=\Res_r(\wh{\d_{x_0}})(\xi)
\]
for $\xi\in S^{d-1}$, which
is the Fourier transform of the measure $\d_{x_0}$ restricted
to the sphere of radius $r$.
Our objective in this section is to estimate $S_r^l\psi_0=\Res_r(\wh\nu_l)$.
The estimate will be useful in the range $r<l^{-1/2}\log l$, i.e.
when the frequency is sufficiently small.

The next proposition shows that $S_r^l\psi_0$ is approximated
by the Fourier transform of a Gaussian distribution with covariance
matrix $\D$ which depends on $\mu$.

\begin{prp}\label{pr_low}
Assume that $\mu$ is almost non-degenerate,
has finite moments of order $\a$
for some $\a\ge 2$ and satisfies $(C)$.
There is a constant $C$ and a symmetric positive definite quadratic form
$\Delta(\xi,\xi)$ on $\R^d$ invariant under the action of $K$ and
$\t_0$, such that the following holds
\be\label{eq_plow1}
\|S_r^l\psi_0-e^{-r^2l\Delta}\|_2<C(r^{\min\{1,\a-2\}}+|x_0|^2r^2).
\ee
Moreover, if $\mu$ is symmetric or satisfies $(E)$,
then we have the better
bound:
\be\label{eq_plow2}
\|S_r^l\psi_0-e^{-r^2l\Delta}\|_2<C(r^{\min\{2,\a-2\}}+|x_0|^2r^2).
\ee
$C$ and $\Delta$  depend only on $\mu$.
When $\a=2$, we can replace the right hand sides of \eqref{eq_plow1} and \eqref{eq_plow2}
by $o(1)+C(|x_0|^2r^2)$ as $r\to 0$.
\end{prp}

The role of the last sentence is simply that we can conclude
the Central Limit Theorem even in the case $\a=2$.

The rest of this section is devoted to the proof of this proposition.
We will give a slight improvement  in Section \ref{sc_improv}
for the range $r>l^{-1/2}$.
However, this improvement requires the assumption $(SSR)$ and
it is based on the results of Section \ref{sc_high}.

Throughout this section
we make the following assumptions.
We assume that $r$ is small, i.e.
$r<c\min\{1,|x_0|^{-1}\}$, where $c$ is a suitable small constant.
For $r$ larger, the statement of the proposition is vacuous.
We assume that $\mu$ is almost non-degenerate, has finite moments of order
$\a\ge2$ and satisfies $(C)$.
In addition, at certain parts we assume
that $\mu$ is symmetric or satisfies $(E)$,
but we always mention these explicitly. 

The argument is based on Tutubalin's paper \cite{Tut-CLT}.
The most significant difference is
that we consider the following, more general,
decomposition of the space $\HH:=L^2(S^{d-1})$.
This is due to the fact that we do not assume $K=\SO(d)$.

Let $\HH_0$ be the subspace
of functions $\f\in\HH$, which are fixed by the action of $K$, i.e.
$\f(\t \xi)=\f(\xi)$ for every $\t\in K$.
For later reference we note that if $\f\in\HH$, then the orthogonal
projection of $\f$ to $\HH_0$ is obtained by the formula:
\be\label{eq_projinv}
\int \f(\t\xi)d m_K(\t).
\ee

Denote by $\PP_k\subset\HH$ the space of functions, which
are restrictions of degree $k$ polynomials
to $S^{d-1}$.
We define the spaces $\HH_k$, $k\ge1$ recursively.
Once $\HH_k$ is defined, let $\HH_{k+1}$ be the orthogonal
complement of $\HH_k$ in the space
\[
\vspan\{\psi\f:\psi\in \PP_{k+1}, \f\in\HH_0\},
\]
where $\vspan\{\cdot\}$ denotes the smallest closed subspace
that contains the functions inside the brackets.
Since $\PP_k\subset\HH_0\oplus\ldots\oplus\HH_k$, we have indeed
\[
\HH=\HH_0\oplus\HH_1\oplus\ldots.
\]
Denote by $\HH_\infty=\HH_4\oplus\HH_5\oplus\ldots$.
Finally, let $P_i:\HH\to\HH_i$ be the orthogonal projection
operator for each $i\in\{0,1,\ldots,\infty\}$.

In the special case $K=\SO(d)$, $\HH_k$ is the familiar space of spherical
harmonics of degree $k$, which was considered in Tutubalin's paper.

Taking $r=0$, it is easy to see that the above subspaces are invariant
for $S_0$.
Below, we will show that they are ``almost invariant" for small $r$;
more precisely, we will bound the norm of $P_i S_rP_j$ by a
polynomial of $r$, for $i\neq j$.
Additionally, we will see that the norm of
$P_iS_rP_i$ for all $1\le i<\infty$ is strictly less than 1.
However, the dependence on $i$ would require a more careful analysis.
Fortunately, we do not need to do this here, since as we will see, the
contribution of the spaces $\HH_i$, $i\ge 4$ is negligible
compared to other error terms, (this is why we introduced
the notation for the space $\HH_\infty$).
These estimates, which are simply based on Taylor expansion,
will be given in Section \ref{sc_Taylor}.

To simplify notation, we write $\psi_l=(P_0S_rP_0)^l\psi_0$ for $l\ge1$.
We will use the almost invariance of $S_r$ mentioned in the
previous paragraph to show that $\psi_l$ is a good approximation
to $S_r^l\psi_0$.
This is done in two steps.
We set $P=P_0+P_1+P_2+P_3$,
and consider another sequence, defined by
$\psi'_l=(PS_rP)^{l}\psi_{0}$ for $l\ge0$.
The next two Lemmata that will be proved in Section \ref{sc_psi}
claims that $\psi_l'$ approximates $S_r^l\psi_0$ and $\psi_l$
approximates $\psi_l'$.

\begin{lem}\label{lm_Papprox}
There is a constant $C>0$ such that the following holds:
\[
\|\psi'_l-S_r^l\psi_0\|_2\le C(r^{\min\{\a-2,2\}}+(|x_0|r)^4).
\]
When $\a=2$, we can replace the right hand side by
$o(1)+C(|x_0|r)^4$ as $r\to0$.
\end{lem}

\begin{lem}\label{lm_psiapprox}
There are constants $C,c>0$ depending only on $\mu$
such that the following holds for
$l\ge C\log(r^{-1}|x_0|+2)$:
\[
\|\psi_l-\psi'_l\|_2\le Ce^{-cr^2l}r.
\]
If $\mu$ is symmetric, then
\[
\|\psi_l-\psi'_l\|_2\le Ce^{-cr^2l}r^2.
\]
If $\mu$ satisfies $(E)$, (but not necessarily symmetric), then
\[
\|\psi_l-\psi'_l\|_2\le Ce^{-cr^2l}(r^{\min\{\a-1,2\}}+|x_0|r^2).
\]
\end{lem}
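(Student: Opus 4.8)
The plan is to establish Lemma \ref{lm_psiapprox} by comparing the two semigroups $(PS_rP)^l$ and $(P_0S_rP_0)^l$ acting on $\psi_0$, exploiting that $P_0$ and $P-P_0 = P_1+P_2+P_3$ are $S_r$-almost-invariant with an explicit bound on the off-diagonal blocks in powers of $r$. First I would record the two structural facts that will carry the proof, both of which are the Taylor-expansion estimates promised for Section \ref{sc_Taylor}: (i) the off-diagonal blocks satisfy $\|P_iS_rP_j\| \le Cr^{|i-j|}$ for $i\ne j$ in $\{0,1,2,3\}$, with the crucial improvement that $\|P_0S_rP_1\|\le Cr^2$ when $\mu$ is symmetric or satisfies $(E)$ (this is where the cubic-term cancellation enters), and more generally $\|P_0S_r(P_1+P_2+P_3)\| \lesssim r$ in general but $\lesssim r^2$ (resp. the $(E)$ bound) in the symmetric/even case; and (ii) the diagonal blocks $P_iS_rP_i$ for $1\le i\le 3$ have norm $\le 1-c$ for a constant $c>0$ depending only on $\mu$ (spectral gap coming from the fact that nonconstant functions on $S^{d-1}$ are genuinely rotated by $K$ since $K^\circ$ has no invariant vectors), while $\|P_0 S_r P_0\| \le 1 - c r^2$ for small $r$. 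I would also note $\|\psi_0\|_2=1$ and that $\psi_0$ is close to $\HH_0$: its projection onto $\HH_1\oplus\HH_2\oplus\HH_3$ has norm $O(|x_0|r)$, using $\psi_0(\xi)=e(r\langle x_0,\xi\rangle)=1-2\pi i r\langle x_0,\xi\rangle+O(|x_0r|^2)$.

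Next I would set up the telescoping comparison. Write $A=PS_rP$ and $A_0=P_0S_rP_0$ (both viewed on $\HH$, with $A_0$ supported on $\HH_0$). Decompose any vector as $u=u_0+u_\perp$ with $u_0=P_0u$, $u_\perp=(P_1+P_2+P_3)u$. The key dynamical claim is that after an initial transient of $C\log(r^{-1}|x_0|+2)$ steps, the $A$-orbit of $\psi_0$ has its $\HH_1\oplus\HH_2\oplus\HH_3$-component of size $O(r\cdot e^{-cr^2l})$ in general, $O(r^2 e^{-cr^2l})$ in the symmetric case, etc. This I would prove by a fixed-point / renewal argument: using (ii), each step contracts $u_\perp$ by a factor $(1-c)$ but feeds in a new contribution $\|(P_1+P_2+P_3)S_rP_0\| \|u_0\| \le Cr\|u_0\|$ (or $Cr^2\|u_0\|$ under symmetry, $C(r^{\min\{\a-1,2\}}+|x_0|r^2)$ under $(E)$ — the $|x_0|r^2$ term accounting for the small $x_0$-dependent piece of $\psi_0$ that is not yet in $\HH_0$), while $\|u_0\|$ itself decays like $e^{-cr^2l}$ off a bounded multiplicative constant. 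Summing the geometric series $\sum_{j\le l}(1-c)^{l-j}e^{-cr^2 j}\cdot(\text{source})$, and using $(1-c)^{l-j}$ is negligible once $l-j\gtrsim\log(\cdots)$, gives $\|u_\perp^{(l)}\|_2 \le C e^{-cr^2 l}\cdot(\text{source})$ for $l\ge C\log(r^{-1}|x_0|+2)$; the initial transient is exactly the number of steps needed for $(1-c)^{l}\|(P_1+P_2+P_3)\psi_0\|_2 = (1-c)^l O(|x_0|r)$ to drop below $r$ (resp. $r^2$), which is $O(\log(r^{-1}|x_0|+2))$.

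Then $\psi_l'-\psi_l = A^l\psi_0 - A_0^l\psi_0$ is controlled by a second telescoping: $A^l\psi_0-A_0^l\psi_0 = \sum_{j=0}^{l-1}A_0^{\,l-1-j}(A-A_0)A^j\psi_0$. Now $(A-A_0)$ applied to a vector $u$ produces only terms that involve either $u_\perp$ or the off-diagonal block $P_0S_r(P_1+P_2+P_3)$; schematically $\|(A-A_0)u\|_2 \le C(\|u_\perp\|_2 + r\|u_0\|_2)$ in general, with $r$ replaced by $r^2$ (resp. the $(E)$ expression) in the improved cases, because the only way $A$ differs from $A_0$ on the $\HH_0$-component is through the $P_0S_rP_\perp$ block hitting $u_\perp$, plus $A$ pushing $u_0$ into $\HH_\perp$. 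Inserting the bound $\|u_\perp^{(j)}\|_2 \le Ce^{-cr^2j}\cdot(\text{source})$ from the previous paragraph and $\|A_0^{l-1-j}\|\le e^{-cr^2(l-1-j)}$ (valid on $\HH_0$, $r$ small), each summand is $\le C e^{-cr^2(l-1-j)}\cdot e^{-cr^2 j}\cdot(\text{source}) = Ce^{-cr^2(l-1)}(\text{source})$, and there are $l$ terms — but here I must be slightly careful: summing $l$ copies of $e^{-cr^2 l}$ would lose a factor $l$. The fix is standard: split the sum at $j=l/2$; for $j<l/2$ use the decay of $A_0^{l-1-j}$ to gain $e^{-cr^2 l/2}$ and absorb the factor $l$ since $l e^{-cr^2 l/2}\le C e^{-cr^2 l/4}$ up to adjusting $c$; for $j\ge l/2$ use that $\|u_\perp^{(j)}\|_2$ already carries $e^{-cr^2 l/2}$, same conclusion. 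This yields $\|\psi_l-\psi_l'\|_2 \le Ce^{-cr^2 l}\cdot(\text{source})$ with source $= r$ (general), $r^2$ (symmetric), $r^{\min\{\a-1,2\}}+|x_0|r^2$ (under $(E)$), exactly the three claimed bounds, valid for $l\ge C\log(r^{-1}|x_0|+2)$.

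The main obstacle I expect is bookkeeping the $x_0$-dependence cleanly and getting the three different source terms to come out with the stated exponents — in particular, in the $(E)$ case tracking why one gets $r^{\min\{\a-1,2\}}$ rather than $r^{\min\{\a-2,2\}}$ or $r^2$: this must come from combining the $(E)$-improved off-diagonal bound on $P_0S_rP_1$ (a cancellation of the cubic Taylor term leaving the quartic, hence $r^2$, but only $r^{\a-1}$ if $\a<3$ because the relevant moment is of order $\a-1$ after one factor of $\langle\xi,v\rangle$ is extracted) with the fact that without full symmetry the odd-degree spaces $\HH_1,\HH_3$ still interact, so one cannot push past $r^2$; and the $|x_0|r^2$ addend survives because even under $(E)$ the starting vector $\psi_0$ carries a genuine $\HH_1$-component of size $|x_0|r$ that only the next step pushes down by another factor $r$. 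All of the block-norm estimates in (i)–(ii) are deferred to Section \ref{sc_Taylor} and Section \ref{sc_psi} as the excerpt indicates, so here they may be invoked; the content of the present lemma is precisely the two telescoping/renewal arguments above.
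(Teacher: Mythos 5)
Your architecture is the same as the paper's: the telescoping identity $(PS_rP)^l\psi_0-(P_0S_rP_0)^l\psi_0=\sum_j (P_0S_rP_0)^{l-1-j}\bigl(P_0S_rP_0-PS_rP\bigr)(PS_rP)^j\psi_0$, the block norm estimates from Section \ref{sc_Taylor}, and an inductive/renewal control of the components $P_i\psi'_j$ (this last is exactly Lemma \ref{lm_P}, which you re-derive; note the paper gets $\|P_i\psi'_j\|_2\lesssim r^2e^{-cr^2j}$ for $i\ge1$, not just $re^{-cr^2j}$, and your "feed" $\|(P_1+P_2+P_3)S_rP_0\|$ is $\lesssim r^2$ by condition $(C)$, not $r$). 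The genuine gap is in how you sum the telescoped series. You bound each summand by $\|P_0S_rP_0\|^{l-1-j}\,\|(A-A_0)\psi'_j\|_2$ and estimate the defect by $C(\|(P_1+P_2+P_3)\psi'_j\|_2+r\|P_0\psi'_j\|_2)$, i.e.\ you keep the block $(P_1+P_2+P_3)S_rP_0\psi'_j$ in the defect. This discards the key structural fact: for every $j\le l-2$ the prefactor $(P_0S_rP_0)^{l-1-j}$ begins with $P_0$ and annihilates the whole $\HH_1\oplus\HH_2\oplus\HH_3$ part of the defect, so only $P_0S_r(P-P_0)\psi'_j$ survives, of norm $\le\sum_{i=1}^3\|P_0S_rP_i\|\,\|P_i\psi'_j\|_2\lesssim r\cdot r^2e^{-cr^2j}=r^3e^{-cr^2j}$ in the general case (and $r^4e^{-cr^2j}$ under symmetry, using selfadjointness to get $\|P_0S_rP_1\|=\|P_1S_rP_0\|\le Cr^2$). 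Your per-term bound is therefore $re^{-cr^2(l-1)}$ where the paper's is $r^3e^{-cr^2(l-1)}$, and that extra $r^2$ is not cosmetic: it is precisely what pays for the factor $l$ coming from the number of summands.

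Indeed, your proposed absorption of that factor, "$le^{-cr^2l/2}\le Ce^{-cr^2l/4}$ after splitting at $j=l/2$", is false exactly in the regime where the lemma is used: for $r\lesssim l^{-1/2}$ one has $r^2l=O(1)$, so the left side grows like $l$ while the right side stays bounded; no splitting of the sum fixes this. The paper instead uses $l\,e^{-cr^2l}\le \frac{2}{c}r^{-2}e^{-cr^2l/2}$, trading the factor $l$ for $r^{-2}$, which closes only because each term carries the extra $r^2$ from the $P_0$-projection step above (so $l\cdot r^3e^{-cr^2l}\lesssim re^{-cr^2l/2}$, and $l\cdot r^4e^{-cr^2l}\lesssim r^2e^{-cr^2l/2}$ in the symmetric case); even a charitably repaired version of your scheme, without that step, yields only $Ce^{-cr^2l/2}$ with no power of $r$. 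A smaller point: in the $(E)$ case one cannot improve $\|P_0S_rP_1\|$ (no selfadjointness); the paper instead improves $\|P_1\psi'_k\|_2$ to $Cr^{\min\{3,\a\}}e^{-cr^2k}$ via the $(E)$ refinement of Lemma \ref{lm_normlinear} fed into Lemma \ref{lm_P}, and the stated $r^{\min\{\a-1,2\}}+|x_0|r^2$ then falls out of the same $l$-absorption; your attribution of the improvement to the $P_0S_rP_1$ block would not survive without symmetry.
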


In light of these Lemmata, it remains to understand the
operator $P_0S_rP_0$.
This is essentially a multiplication operator as the next formula shows.
For $\f\in\HH_0$:
\begin{align}
P_0S_rP_0\f(\xi)&=\iint
e(r\langle\s\xi,v(\g)\rangle)\f(\t(\g)^{-1}\s\xi)
d\mu(\g)dm_K(\s)\nonumber\\
\label{eq_Sr0}
&=F(\xi)\f(\t_0^{-1}\xi),
\end{align}
where
\be\label{eq_defF}
F(\xi)=\iint
e(r\langle\s\xi,v(\g)\rangle)
d\mu(\g)dm_K(\s).
\ee
Recall that $\supp\t(\mu)\subset \t_0 K$, and $\t_0$ normalizes $K$.

Based on this formula and the Taylor expansion of the function $F$,
we will prove the following lemma in Section \ref{sc_lmpsi}.
\begin{lem}\label{lm_psi}
There are constants $C,c>0$ and a quadratic form $\Delta$ on $\R^d$
depending only on $\mu$
such that
\[
\|\psi_l-e^{-lr^2\Delta}\|_2<Ce^{-clr^2}(r^{\min\{1,\a-2\}}+|x_0|^2r^2).
\]
$\D$ is invariant under $K$ and $\t_0$.
When $\a=2$, we can replace the right hand side by $o(1)+C(|x_0|^2r^2)$
as $r\to0$.

If $\mu$ is symmetric or satisfies $(E)$,
then we have the better
estimate
\[
\|\psi_l-e^{-lr^2\Delta}\|_2<Ce^{-clr^2}(r^{\min\{2,\a-2\}}+|x_0|^2r^2).
\]
\end{lem}

Proposition \ref{pr_low} immediately follows from Lemmata
\ref{lm_Papprox}--\ref{lm_psi}.

\subsection{Taylor expansion and approximate invariance}
\label{sc_Taylor}
We give some estimates for the norm of the
operators $P_iS_r P_j$  in this section.
These will be deduced from the following lemma, which is
based on the Taylor series expansion of the function
$e(r\langle\xi,v(\g)\rangle)$ which is the multiplier in the 
representation $\rho_r$.

\begin{lem}\label{lm_Taylor}
There is an absolute constant $C>0$ such that for any $\f\in\HH$ with
$\|\f\|_2=1$ and $\g\in \Isom(\R^d)$ with $\t(\g)\in \t_0K$ we have
\begin{align}\label{eq_Taylor1}
\|P_i\rho_r(\g)P_i\f-\rho_0(\g)P_i\f\|_2&<Cr|v(\g)|,\\
\label{eq_Taylor2}
\|P_j\rho_r(\g)P_i\f\|_2&<\min\{1,C(r|v(\g)|)^{|i-j|}\}.
\end{align}
\end{lem}
\begin{proof}
For the proof, we can assume that $\f\in\HH_i$ that is $P_i\f=\f$.
By Taylor's theorem,
\begin{align}
\rho_r(\g)\f(\xi)&=e(r\langle\xi,v(\g)\rangle)\f(\t(\g)^{-1}\xi)\nonumber\\
&=\left[\sum_{m=0}^{M-1}C_mr^m\langle\xi,v(\g)\rangle^m+
O(r^{M}|v(\g)|^M)\right]\f(\t(\g)^{-1}\xi),\label{eq_Taylor3}
\end{align}
where $C_0=1$, and $C_m$ and the implied constant are absolute.

To deduce (\ref{eq_Taylor1}), take $M=1$ in \eqref{eq_Taylor3},
apply $P_i$ to both sides and subtract
\[
\rho_0(\g)\f(\xi)=\f(\t(\g)^{-1}\xi)=P_i(\f(\t(\g)^{-1}\xi)).
\]

To deduce (\ref{eq_Taylor2}) when $j>i$, take $M=j-i$.
Write
\[
q(\xi)=\sum_{m=0}^{j-i-1}C_mr^m\langle\xi,v(\g)\rangle^m\in \PP_{j-i-1}.
\]
Since $\f\in \HH_i$, we have
\[
\f=p_1\psi_1+\ldots+ p_k\psi_k
\]
with some $p_1,\ldots,p_k\in\PP_i$ and $\psi_1,\ldots,\psi_k\in\HH_0$.
Then
\begin{align*}
\sum_{m=0}^{j-i-1}C_m&r^m\langle\xi,v(\g)\rangle^m\f(\t(\g)^{-1}\xi)\\
&=q(\xi)[p_1(\t(\g)^{-1}\xi)\psi_1(\t_0^{-1}\xi)+\ldots
+p_k(\t(\g)^{-1}\xi)\psi_k(\t_0^{-1}\xi)],
\end{align*}
where 
$q(\xi)p_n(\t(\g)^{-1}\xi)\in\PP_{j-1}$
for any $1\le n\le k$.
Thus after applying $P_j$, these terms vanish in \eqref{eq_Taylor3}.
Then we get
\[
P_j\rho_r(\g)\f=O(r^{j-i}|v(\g)|^{j-i})\f(\t(\g)^{-1}\xi), 
\]
which proves (\ref{eq_Taylor2}) when $j>i$.

Let now $j<i$, and let $\psi\in\HH_j$ with $\|\psi\|_2=1$ be such that
\[
\|P_j\rho_r(\g)\f\|_2=\langle\rho_r(\g)\f,\psi\rangle=
\langle\f,\rho_r(\g^{-1})\psi\rangle\le\|P_i\rho_r(\g^{-1})\psi\|_2.
\]
Then the claim follows from (\ref{eq_Taylor2})
applied for $\psi$ and $\g^{-1}$ and the role of $i$ and $j$
reversed.
\end{proof}

\begin{lem}\label{lm_norm1}
There are constants $c<1$ and
$C$ depending only on $\mu$ such that the following hold
\begin{align*}
\|P_i S_rP_i\|&\le c \quad {\rm for}\quad r<c \quad {\rm and}
\quad 1\le i\le3,\\
\|P_i S_r P_j\|&\le C r^{\min\{|i-j|,\a\}}.
\end{align*}
When $|i-j|>\a$, we can
replace the second estimate by $o(r^{\min\{|i-j|,\a\}})$.
\end{lem}

\begin{proof}
To prove the first inequality,
we integrate (\ref{eq_Taylor1}) with respect to $d\mu(\g)$:
\begin{align}
\|P_iS_rP_i\f-S_0 P_i\f\|_2
&=\left\|\int P_i\rho_r(\g)P_i\f-\rho_0(\g)P_i\f d\mu(\g)\right\|_2\nonumber\\
&<Cr\int|v(\g)|d\mu(\g).\label{eq_norm11}
\end{align}

This inequality shows that it is enough to estimate the norm of $S_0$
on $\HH_1\oplus\HH_2\oplus\HH_3$.
Denote by $\PP'$ the orthogonal complement of $\HH_0\cap\PP_3$ in
$\PP_3$.
For each $\f\in\PP'$, $\|S_0^*S_0\f\|_2<\|\f\|_2$,
because otherwise $\f$ would be invariant under $K$, i.e.
we would have $\f\in\HH_0$.
Since $\PP'$ is
finite dimensional, there is a constant
$c<1$ such that $\|S_0^*S_0\f\|_2<c\|\f\|_2$ for $\f\in\PP'$.

Let $\f_1,\ldots,\f_k$ be an orthonormal basis of $\PP'$
consisting of eigenfunctions of $S_0^*S_0$.
Observe that the spaces $\f_i\cdot\HH_0$ are eigenspaces
of $S_0^*S_0$ with the same eigenvalues as $\f_i$.
Note that any $\f\in \HH_1\oplus\HH_2\oplus\HH_3$
is of the form $p_1\psi_1+\ldots+ p_k\psi_k$
with $p_i\in\PP'$ and $\psi_i\in\HH_0$.
Hence the eigenspaces  $\f_i\cdot\HH_0$  span $\HH_1\oplus\HH_2\oplus\HH_3$.
Then we are able to conclude that
\[
\|P_iS_0P_i\|^2\le\|P_iS_0^*S_0P_i\|<c
\]
for $i=1,2,3$.
This combined with \eqref{eq_norm11}
proves the first claim provided $r$ is sufficiently small.

For the second claim, we integrate (\ref{eq_Taylor2}):
\[
\|P_jS_rP_i\|<\int\min\{1, C(r|v(\g)|)^{|i-j|}\}d\mu(\g).
\]
If $|i-j|\le\a$, we can simply write
\[
\|P_jS_rP_i\|<Cr^{|i-j|}\int|v(\g)|^{|i-j|}d\mu(\g),
\]
and the claim follows from the moment condition on $\mu$.
If $|i-j|>\a$, we write
\[
\|P_jS_rP_i\|<Cr^{\a}\int|v(\g)|^{\a}
\min\{(r|v(\g)|)^{-\a},(r|v(\g)|)^{|i-j|-\a}\}d\mu(\g).
\]
Observe that
\[
\min\{(r|v(\g)|)^{-\a},(r|v(\g)|)^{|i-j|-\a}\}\le1
\]
and that it tends to 0 for all $\g$ as $r\to0$.
Now the claim follows by the dominated convergence theorem.
\end{proof}

The bound on $P_1S_rP_0$
in Lemma \ref{lm_norm1} is not optimal.
Indeed, it is easy to see that the linear terms
in the Taylor expansions cancel thanks to condition $(C)$.

\begin{lem}\label{lm_normlinear}
There is a constant $C$ such that
\[
\|P_1S_rP_0\|\le Cr^2.
\]
If $\mu$ also satisfies $(E)$, then we get the better bound
\[
\|P_1S_rP_0\|\le Cr^{\min\{3,\a\}}.
\]
\end{lem}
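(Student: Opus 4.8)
The plan is to compute the action of $S_r$ on $\HH_0$ explicitly, exactly as in the derivation of (\ref{eq_Sr0}) but without averaging over $K$, and then Taylor-expand the multiplier $e(r\langle\xi,v(\g)\rangle)$: condition $(C)$ kills the linear term, and in the $(E)$ case a parity argument kills the quadratic one.

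For $\f\in\HH_0$, since $\supp\t(\mu)\subset\t_0 K$ and $\f$ is $K$-invariant, one has $\f(\t(\g)^{-1}\xi)=\f(\t_0^{-1}\xi)$ for $\mu$-a.e.\ $\g$. Setting $g(\xi):=\f(\t_0^{-1}\xi)$ and
\[
\Phi_r(\xi):=\int\big(e(r\langle\xi,v(\g)\rangle)-1\big)\,d\mu(\g),
\]
this gives $S_r P_0\f=(1+\Phi_r)\,g$. Because $\t_0$ normalizes $K$, the function $g$ again lies in $\HH_0$ and $\|g\|_2=\|\f\|_2$, so $P_1 g=0$ and hence $P_1 S_r P_0\f=P_1(\Phi_r\cdot g)$. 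It thus suffices to control $\|\Phi_r\|_\infty$, and, for the sharper bound, the $\HH_1$-component of $\Phi_r\cdot g$.

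For the general estimate, Taylor's theorem gives $e(r\langle\xi,v(\g)\rangle)-1=-2\pi i r\langle\xi,v(\g)\rangle+O((r|v(\g)|)^2)$ uniformly in $\xi$; integrating in $\g$, the linear term equals $-2\pi i r\langle\xi,\int v(\g)\,d\mu(\g)\rangle=0$ by $(C)$, whence $\|\Phi_r\|_\infty\lesssim r^2\int|v(\g)|^2\,d\mu(\g)\lesssim r^2$ and $\|P_1 S_r P_0\f\|_2\le\|\Phi_r\|_\infty\|g\|_2\lesssim r^2$. For the improvement under $(E)$, I would expand one order further, writing $\Phi_r(\xi)=-2\pi^2 r^2 Q(\xi)+R_r(\xi)$ with $Q(\xi)=\int\langle\xi,v(\g)\rangle^2\,d\mu(\g)$ a quadratic form; the remainder is $O\big((r|v(\g)|)^2\min\{1,r|v(\g)|\}\big)$ pointwise, and using $\min\{1,t\}\le t^{\min\{1,\a-2\}}$ together with finiteness of the moment of order $\min\{3,\a\}\le\a$ one gets $\|R_r\|_\infty\lesssim r^{\min\{3,\a\}}$. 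The key point is that under $(E)$ the space $\HH_0$ consists of even functions: for each $\xi\in S^{d-1}$ there is $\t_\xi\in K$ with $\t_\xi\xi=-\xi$, so any $K$-invariant function is even. Consequently $\span\{\PP_1\cdot\HH_0\}$ splits orthogonally as $\HH_0$ (its even part) plus the closed span of the odd functions $\langle\xi,w\rangle\f(\xi)$ with $w\in\R^d$, $\f\in\HH_0$ — the orthogonality holding because even and odd functions on $S^{d-1}$ are $L^2$-orthogonal (the natural measure is invariant under $\xi\mapsto-\xi$). Hence $\HH_1$ consists of odd functions, while $Q\cdot g$ is a product of two even functions and so is even; therefore $P_1(Q\cdot g)=0$, $P_1 S_r P_0\f=P_1(R_r\cdot g)$, and $\|P_1 S_r P_0\f\|_2\le\|R_r\|_\infty\|g\|_2\lesssim r^{\min\{3,\a\}}$.

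The only step that requires a little care is the identification of $\HH_1$ with a space of odd functions under $(E)$; the remaining ingredients — the elementary Taylor estimates and the standing moment hypothesis — are routine.
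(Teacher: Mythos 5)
Your proof is correct and follows essentially the same route as the paper: Taylor-expand the multiplier $e(r\langle\xi,v(\g)\rangle)$ acting on $\HH_0$, use $(C)$ to kill the linear term, and under $(E)$ use that $\HH_0$ is even and $\HH_1$ odd so that $P_1$ annihilates the quadratic term. Your handling of the remainder via $\min\{(r|v|)^2,(r|v|)^3\}\le (r|v|)^{\min\{3,\a\}}$, and your explicit justification that $\HH_1$ consists of odd functions, are in fact slightly more careful than the paper's sketch, but the argument is the same.
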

\begin{proof}
Take $\f\in\HH_0$, and as in the proof of Lemma \ref{lm_Taylor},
write the Taylor expansion:
\[
\rho_r(\g)\f(\xi)=\left[1+C_1r\langle\xi,v(\g)\rangle+
C_2r^2\langle\xi,v(\g)\rangle^2+
O(r^{3}|v(\g)|^3)\right]\f(\t_0^{-1}\xi).
\]

Similarly to the proof of Lemma \ref{lm_norm1}, we integrate
this inequality.
To get the first claim, we only need to note that
\[
\int\langle\xi,v(\g)\rangle d\mu(\g)=
\left\langle\xi,\int v(\g)d\mu(\g)\right\rangle=0
\]
because of the assumption $(C)$.

Assumption $(E)$ implies that $\HH_0$ consists of even
functions, and hence $\HH_1$ contains only odd ones.
Since
\[
\int\langle\xi,v(\g)\rangle^2 d\mu(\g)\cdot\f(\t_0^{-1}\xi)
\]
is an even function of $\xi$, it is in the kernel of $P_1$.
This establishes the second claim.
\end{proof}

We also need a norm estimate for $P_0S_rP_0$.
As we remarked in (\ref{eq_Sr0}), this operator is essentially
a multiplication
operator by the function $F$ defined in \eqref{eq_defF}.
Hence what we need to understand is the
behavior of $F$ near the origin.

\begin{lem}\label{lm_TaylorF}
There is a constant $C$ such that
\[
|F(\xi)-(1-r^2\Delta(\xi,\xi))|<Cr^{\min\{3,\a\}},
\]
where $\Delta(\xi,\xi)$ is a positive definite quadratic form
depending on $\mu$.
If $\a<3$, then the above bound can be improved to
$o(r^\a)$.
Furthermore, if $\mu$ is symmetric or satisfies $(E)$, then we have the
improved bound:
\[
|F(\xi)-(1-r^2\Delta(\xi,\xi))|<Cr^{\min\{4,\a\}}.
\]
\end{lem}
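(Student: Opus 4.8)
The plan is to develop $F$ as a power series in $r$, keep the constant, linear and quadratic terms, and dominate the tail by an integrable function of $\g$. Using the elementary bound $\bigl|e(t)-\sum_{m=0}^{n}\frac{(-2\pi i)^m}{m!}t^m\bigr|\le\min\{c_n|t|^{n+1},c_n'|t|^n\}$ with $t=r\langle\s\xi,v(\g)\rangle$ (so $|t|\le r|v(\g)|$ uniformly in $\xi\in S^{d-1}$ and $\s\in K$) and integrating against $dm_K(\s)\,d\mu(\g)$ in $(\ref{eq_defF})$: the constant term contributes $1$; the linear term vanishes by $(C)$, since $\int v(\g)\,d\mu(\g)=\int\g(0)\,d\mu(\g)=0$ (Fubini is legitimate as $\a\ge1$); and the quadratic term contributes $-r^2\D(\xi,\xi)$ with
\[
\D(\xi,\xi):=2\pi^2\int_K\int_{\Isom(\R^d)}\langle\s\xi,v(\g)\rangle^2\,d\mu(\g)\,dm_K(\s),
\]
a nonnegative quadratic form that is $K$-invariant because $\s\mapsto\s\k$ preserves $m_K$. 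The remainder is then at most $\int\min\{cr^3|v(\g)|^3,c'r^2|v(\g)|^2\}\,d\mu(\g)$; for $\a\ge3$ this is $\le cr^3\int|v(\g)|^3\,d\mu(\g)=O(r^3)$, while for $2\le\a<3$ I would factor the integrand as $r^\a|v(\g)|^\a$ times a quantity bounded by an absolute constant and tending to $0$ pointwise as $r\to0$, so that dominated convergence yields $o(r^\a)$. This gives the first assertion together with its refinement for $\a<3$.

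Positive definiteness of $\D$ is the only place almost non-degeneracy enters, and I expect it to be the step requiring the most care. If $\D(\xi_0,\xi_0)=0$ for some $\xi_0\ne 0$, then $\langle\s\xi_0,v(\g)\rangle=0$ for $(m_K\times\mu)$-almost every $(\s,\g)$; fixing $\g$ in the associated full-$\mu$-measure set and using that $\s\mapsto\langle\s\xi_0,v(\g)\rangle$ is continuous while $m_K$ has full support on $K$, we obtain $\langle\s\xi_0,v(\g)\rangle=0$ for all $\s\in K$, in particular $\langle\xi_0,v(\g)\rangle=0$. The set $\{\g:v(\g)\perp\xi_0\}$ is closed and of full $\mu$-measure, hence contains $\supp\mu$, so $\{\g(0):\g\in\supp\mu\}=\{v(\g):\g\in\supp\mu\}$ lies in the proper affine subspace $\xi_0^\perp$, contradicting almost non-degeneracy at the origin.

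For the improvement to $r^{\min\{4,\a\}}$ under symmetry or $(E)$ it is enough to show that the cubic term $T_3(\xi)=\frac{(-2\pi i)^3 r^3}{6}\int_{\Isom(\R^d)}\int_K\langle\s\xi,v(\g)\rangle^3\,dm_K(\s)\,d\mu(\g)$ vanishes: after peeling it off, the remainder is at most $\int\min\{cr^4|v(\g)|^4,c'r^3|v(\g)|^3\}\,d\mu(\g)=O(r^{\min\{4,\a\}})$ when $\a\ge3$, and when $\a<3$ the $o(r^\a)$ estimate already proved is stronger than claimed. Under $(E)$: for fixed $\xi$ the function $v\mapsto\int_K\langle\s\xi,v\rangle^3\,dm_K(\s)$ is odd in $v$ and invariant under $v\mapsto\k v$ for $\k\in K$ (again by left invariance of $m_K$), so applying the reflection $\t_v\in K$ with $\t_v v=-v$ shows it equals its own negative, hence vanishes identically and $T_3\equiv 0$. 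Under symmetry: substituting $\g\mapsto\g^{-1}$ in $T_3$ (which preserves $\mu$), using $v(\g^{-1})=-\t(\g)^{-1}v(\g)$, and absorbing the rotation $\t(\g)^{-1}$ into $\s$ via the fact that $\supp\t(\mu)$ lies in a coset of $K$ normalized by $\t_0$, one is left with $T_3(\xi)=-T_3(\xi)$. The two points I anticipate as genuinely delicate are the dominated-convergence bookkeeping in the low-moment range $2\le\a<3$ and the careful handling of $\t_0$ in the symmetric reduction; the rest is routine Taylor estimation and Fubini.
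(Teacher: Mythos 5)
Your proposal follows the paper's proof essentially step for step: Taylor expansion of $e(r\langle\s\xi,v(\g)\rangle)$ under the $dm_K(\s)\,d\mu(\g)$ integral, vanishing of the linear term by $(C)$, positive definiteness of $\Delta$ from almost non-degeneracy, dominated convergence in the range $2\le\a<3$, the reflection trick for $(E)$ (you reflect $v(\g)$ where the paper reflects $\xi$, which is equivalent), and the $\g\mapsto\g^{-1}$ substitution in the symmetric case. The one point you flag as delicate --- absorbing $\t(\g)$ into the Haar measure $m_K$ even though $\supp\t(\mu)$ only lies in the coset $\t_0K$ --- is handled with the same single assertion in the paper (``$m_K$ is invariant under multiplication by $\t(\g)$''), so your argument is at the same level of detail and rigor as the original.
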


As an immediate corollary, we get that $\|P_0S_rP_0\|<1-cr^2$
for some constant $c>0$.

\begin{proof}
We expand \eqref{eq_defF},  the definition of $F$, in Taylor series
 the same way as we did in the previous lemmata:
\begin{align}
F(\xi)&=\iint 1+C_1r\langle\s\xi,v(\g)\rangle
-C_2r^2\langle\s\xi,v(\g)\rangle^2\nonumber\\
&\quad+C_3r^3\langle\s\xi,v(\g)\rangle^3dm_K(\s)d\mu(\g)
+O(r^{\min\{\a,4\}}),\label{eq_TaylorF1}
\end{align}
where $C_1,C_2, C_3$ are absolute constants, $C_2>0$, and the implied constant
is depends only on $\mu$.

First we note that as in the proof of Lemma \ref{lm_normlinear},
$(C)$ implies that
\[
\int\langle\s\xi,v(\g)\rangle d\mu(\g)=0
\]
for all $\s$ and $\xi$.
Hence the linear term vanishes in the Taylor expansion \eqref{eq_TaylorF1}.

Second, the quadratic term in \eqref{eq_TaylorF1}:
\[
\Delta(\xi,\xi):=\iint C_2r^2\langle\s\xi,v(\g)\rangle^2dm_K(\s)d\mu(\g)
\]
is clearly a $K$ invariant positive semi-definite quadratic form.
We only need to show that it is strictly positive definite.
Denote by $V$ the maximal subspace of $\R^d$ on which $\D(\xi,\xi)$
vanishes.
By the definition of $\Delta$, all $v(\g)$ is orthogonal to $V$,
which would contradict almost non-degeneracy if $V\neq\{0\}$.

If $\a\ge3$ and 
$(E)$ is satisfied, then for all
$\xi$, there is $\s_\xi\in K$ such that $\s_\xi\xi=-\xi$.
Then
\[
2\int \langle\s\xi,v(\g)\rangle^3dm_K(\s)=
\int\langle\s\xi,v(\g)\rangle^3+
\langle\s\s_\xi\xi,v(\g)\rangle^3dm_K(\s)=0,
\]
hence the cubic term in \eqref{eq_TaylorF1} vanishes.

Finally, if $\mu$ is symmetric and $\a\ge3$,
then we also have
\begin{align*}
2\iint \langle\s\xi,v&(\g)\rangle^3dm_K(\s)d\mu(\g)\\
&=\iint\langle\xi,\s v(\g)\rangle^3
+\langle\xi,\s \t(\g)v(\g^{-1})\rangle^3dm_K(\s)d\mu(\g)=0.
\end{align*}
The first equality follows since $\mu$ is symmetric and $m_K$ is
invariant under multiplication by $\t(\g)$ from the left.
The second one follows from $\t(\g)v(\g^{-1})=-v(\g)$.
The claim now follows from \eqref{eq_TaylorF1}.
\end{proof}

\subsection{Approximating \texorpdfstring{$S_r$ by $P_0S_rP_0$}{Sr by P0SrP0}}
\label{sc_psi}

The purpose of this section is to prove Lemmata \ref{lm_Papprox}
and \ref{lm_psiapprox}.
For both of them, we need the next lemma that
provides some estimates for the projections of $\psi'_l$
to the spaces $\HH_i$.

\begin{lem}\label{lm_P}
There are constants $c$ and $C$, such that the following hold:
\begin{align}
\|P_1\psi'_l\|_2&\le C(r^2+e^{-cl}|x_0|r)\quad{\rm and}
\label{eq_P1psi1}\\
\|P_i\psi'_l\|_2&\le C(r^{\min\{i,\a\}}+e^{-cl}|x_0|^{i}r^{i})
\label{eq_Pipsi1}
\end{align}
for $i\in\{2,3\}$ and $l\ge0$.

For $l\ge C\log(r^{-1}|x_0|+2)$, we have
\begin{align}
\|P_0\psi'_l\|_2&\le Ce^{-cr^2l}\label{eq_P0psi2}\\
\|P_1\psi'_l\|_2&\le Cr^2e^{-cr^2l}\quad{\rm and}\label{eq_P1psi2}\\
\|P_i\psi'_l\|_2&\le Cr^{\min\{i,\a\}}e^{-cr^2l}\label{eq_Pipsi2}
\end{align}
for $i\in\{2,3\}$.

Moreover, if $\mu$ satisfies $(E)$, then we can replace $r^2$
by $r^{\min\{3,\a\}}$ in $(\ref{eq_P1psi1})$ and $(\ref{eq_P1psi2})$.
\end{lem}

The following proof is
very technical, although the idea behind it is very simple.
The argument is based on induction on $l$, the norm estimates
of the previous section and triangle inequality.

We first give a brief sketch, which explains why the induction works.
For simplicity, take $x_0=0$ and suppose that the lemma holds for some
$\log(r^{-1}|x_0|+2)<l<r^{-2}$.
(For simplicity, we consider only this range in this informal discussion.)
We can write
\[
P_i\psi'_{l+1}=\sum_{j=0}^3P_iS_rP_j\psi'_l.
\]
We use the induction hypothesis and the lemmata of the previous
section to bound the terms.

What we need for the argument to work are essentially the inequalities
\be\label{eq_weneedthis}
(1-\|P_iS_rP_i\|)X_i
>\sum_{j\neq i}\|P_iS_rP_j\|\|P_j\psi'_l\|_2,
\ee
where $X_i$ is the bound for $\|P_i\psi'_l\|_2$ claimed in \eqref{eq_P0psi2}--\eqref{eq_Pipsi2}.
Notice that if we plug in our estimates that we obtained
in the previous section, then all terms on the right hand
side is of the same or smaller order of magnitude than the left hand side
for any $i$ and $j$.
For example take $i=2$:
Then $(1-\|P_2S_rP_i\|)X_i\ge C r^2$ by Lemma \ref{lm_norm1}.
For the terms on the right, we have
$\|P_2S_rP_j\|\|P_j\psi'_l\|_2\le Cr^{|2-j|}X_j$ by Lemma \ref{lm_norm1} again.
We see that $\|P_2S_rP_0\|\|P_0\psi'_l\|_2\le Cr^2$ and all the other terms are of lower order.

In the following diagram we draw a directed edge from vertex $j$ to
$i$, if the corresponding term on the right of \eqref{eq_weneedthis} is of the same order
as the left hand side.

\bigskip

\begin{center}
\includegraphics[scale=.5]{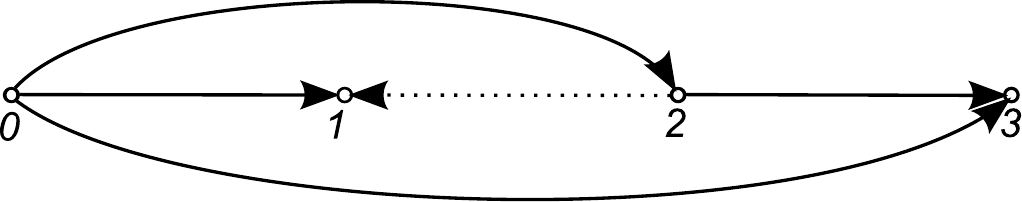}
\end{center}

\bigskip

\noindent (The dotted edge is present when (E) is assumed.)
Since it does not have a directed cycle, we can set the constants
in (\ref{eq_P0psi2})--(\ref{eq_Pipsi2})
following the edges of the diagram such that the induction will work.

\begin{proof}[Proof of Lemma \ref{lm_P}]
We do not give a separate proof for the last statement, but it will be
clear that using the improved estimates in Lemma \ref{lm_normlinear}
available under $(E)$, we get the better error terms.
Let $\b_0$ and $B_0$ be constants whose existence is guaranteed by
Lemmata \ref{lm_norm1}--\ref{lm_TaylorF} such that
\begin{align}
\|P_0S_rP_0\|&\le e^{-\b_0r^2},\label{eq_norm1}\\
\|P_iS_rP_i\|&\le e^{-\b_0}\quad {\rm for\;} i\in\{1,2,3\} \label{eq_norm2}\\
\|P_iS_rP_j\|&\le B_0r^{\min\{\a,|i-j|\}}\quad{\rm and}\label{eq_norm3}\\
\|P_1S_rP_0\|&\le B_0r^2.\label{eq_norm4}
\end{align}

The proof is by induction, and we begin with (\ref{eq_P1psi1}) and
(\ref{eq_Pipsi1}).
We suppose that $|x_0|\ge1$.
In the opposite case the argument is identical;
we only need to replace $|x_0|^2$ and $|x_0|^3$ everywhere by $|x_0|$.
We show that
\begin{align*}
\|P_1\psi'_l\|_2&\le C_1(r^2+e^{-\b_0l/2}|x_0|r)\quad{\rm and}\\
\|P_i\psi'_l\|_2&\le C_i(r^{\min\{i,\a\}}+e^{-\b_0l/2}|x_0|^{i}r^{i})
\end{align*}
for $i\in\{2,3\}$ and $l\ge0$, where $C_i>1$ are suitable constants to
be specified later.
For $l=0$, the claim is verified easily by the Taylor expansion
of $\psi_0$.

Suppose that the claim holds for $l$, and we prove it for $l+1$.
To estimate
\be\label{eq_triangle}
\|P_i\psi'_{l+1}\|_2\le\sum_{j=0}^3\|P_iS_rP_j\|\cdot\|P_j\psi'_l\|,
\ee
we use the induction hypothesis
for $\|P_j\psi'_{l}\|_2$ and the norm estimates
(\ref{eq_norm1})--(\ref{eq_norm4}).
We write for $i=1$:
\begin{align*}
\|P_1&\psi'_{l+1}\|_2\le
B_0r^2
+e^{-\b_0}C_1(r^2+e^{-\b_0l/2}|x_0|r)\\
&+B_0rC_2(r^2+e^{-\b_0l/2}|x_0|^2r^2)
+B_0r^2C_3(r^{\min\{3,\a\}}+e^{-\b_0l/2}|x_0|^3r^3)\\
&\le\left([B_0(1+rC_2+r^2C_3)+e^{-\b_0}C_1]e^{\b_0/2}\right)\cdot
(r^2+e^{-\b_0(l+1)/2}|x_0|r),
\end{align*}

To obtain the last line, we use inequalities of type
\[
(r^2+e^{-\b_0l/2}|x_0|^jr^j)\le
e^{\b_0/2}(r^2+e^{-\b_0(l+1)/2}|x_0|^jr^j)
\]
and also $r|x_0|<1$ that we can suppose without loss of generality
as we mentioned it at the beginning of Section \ref{sc_low}.
For $i=2$:

\begin{align*}
\|P_2&\psi'_{l+1}\|_2\le
B_0r^2
+B_0rC_1(r^2+e^{-\b_0l/2}|x_0|r)\\
&+e^{-\b_0}C_2(r^2+e^{-\b_0l/2}|x_0|^2r^2)
+B_0rC_3(r^{\min\{3,\a\}}+e^{-\b_0l/2}|x_0|^3r^3)\\
&\le\left([B_0(1+C_1+rC_3)+e^{-\b_0}C_2]e^{\b_0/2}\right)\cdot
(r^2+e^{-\b_0(l+1)/2}|x_0|^2r^2),
\end{align*}

We derive the last line the same way as before, but we also use
the inequality $|x_0|\ge 1$.
For $i=3$:

\begin{align*}
\|&P_3\psi'_{l+1}\|_2\le
B_0r^{\min\{3,\a\}}
+B_0r^2C_1(r^2+e^{-\b_0l/2}|x_0|r)\\
&+B_0rC_2(r^2+e^{-\b_0l/2}|x_0|^2r^2)
+e^{-\b_0/2}C_3(r^{\min\{3,\a\}}+e^{-\b_0l/2}|x_0|^3r^3)\\
&\le\left([B_0(1+C_1+C_2)+e^{-\b_0}C_3]e^{\b_0/2}\right)\cdot
(r^{\min\{3,\a\}}+e^{-\b_0(l+1)/2}|x_0|^3r^3).
\end{align*}

Now the claim is satisfied, if we take
\begin{align*}
C_1&=2e^{\b_0/2}B_0/(1-e^{-\b_0/2})\\
C_2&=2e^{\b_0/2}B_0(1+C_1)/(1-e^{-\b_0/2})\\
C_3&=e^{\b_0/2}B_0(1+C_1+C_2)/(1-e^{-\b_0/2})
\end{align*}
and $r$ is so small that $rC_2+r^2C_3<1$ and $rC_3<1$.

The proof of (\ref{eq_P0psi2})--(\ref{eq_Pipsi2}) is very similar.
We begin by choosing $l_0$ such that $e^{-\b_0l_0/2}|x_0|r<r^2$,
but $l_0<2\b_0^{-1}\log(|x_0|/r)+1$.
We show by induction that for $l\ge l_0$ the following holds:
\begin{align*}
\|P_0\psi'_l\|_2&\le C'_0e^{-\b_0r^2l/2},\\
\|P_1\psi'_l\|_2&\le C'_1r^2e^{-\b_0r^2l/2}\quad{\rm and}\\
\|P_i\psi'_l\|_2&\le C'_ir^{\min\{i,\a\}}e^{-\b_0r^2l/2}
\end{align*}
for $i\in\{2,3\}$ and $l\ge0$, where $C_0'=e^{\b_0r^2l_0/2}$
and for $i>0$, $C'_i\ge C_ie^{\b_0r^2l_0/2}$
are suitable constants to
be specified later.
We note that
\[
\b_0l_0/2\le\log(|x_0|/r)+1\le\log(r^{-2})+1<r^{-2}
\]
since $|x_0|r<1$.
Hence $e^{\b_0r^2l_0/2}<e$, so the above bounds on the constants $C_i'$
are independent of $x_0$ and $r$.

From the first part of the proof it follows that the claim holds
for $l=l_0$.
Now we suppose that it holds for a particular
$l\ge l_0$ and prove it for $l+1$.
As above, we use (\ref{eq_triangle}) along with
the induction hypothesis and
(\ref{eq_norm1})--(\ref{eq_norm4}).
We get:
\begin{align*}
&\|P_0\psi'_{l+1}\|_2\le e^{-\b_0r^2}C_0'e^{-\b_0r^2l/2}
+B_0rC'_1r^2e^{-\b_0r^2l/2}\\
&+B_0r^2C'_2r^2e^{-\b_0r^2l/2}
+B_0r^{\min\{3,\a\}}C'_3r^{\min\{3,\a\}}e^{-\b_0r^2l/2}\\
&\le\left([C_0'e^{-\b_0r^2}+B_0(r^3C'_1+r^4C'_2+r^{\min\{6,2\a\}}C'_3)]
e^{\b_0r^2/2}\right)\cdot e^{-\b_0r^2(l+1)/2},
\end{align*}
\begin{align*}
&\|P_1\psi'_{l+1}\|_2\le B_0r^2C_0'e^{-\b_0r^2l/2}
+e^{-\b_0}C'_1r^2e^{-\b_0r^2l/2}\\
&+B_0rC'_2r^2e^{-\b_0r^2l/2}
+B_0r^2C'_3r^{\min\{3,\a\}}e^{-\b_0r^2l/2}\\
&\le\left([e^{-\b_0}C'_1+B_0(C_0'+rC'_2+r^{\min\{3,\a\}}C'_3)]
e^{\b_0r^2/2}\right)\cdot r^2e^{-\b_0r^2(l+1)/2},
\end{align*}
\begin{align*}
&\|P_2\psi'_{l+1}\|_2\le B_0r^2C_0'e^{-\b_0r^2l/2}
+B_0rC'_1r^2e^{-\b_0r^2l/2}\\
&+e^{-\b_0}C'_2r^2e^{-\b_0r^2l/2}
+B_0rC'_3r^{\min\{3,\a\}}e^{-\b_0r^2l/2}\\
&\le\left([e^{-\b_0}C'_2+B_0(C_0'+rC'_1+r^{\min\{2,\a-1\}}C'_3)]
e^{\b_0r^2/2}\right)\cdot r^2e^{-\b_0r^2(l+1)/2},
\end{align*}
\begin{align*}
&\|P_3\psi'_{l+1}\|_2\le B_0r^{\min\{3,\a\}}C_0'e^{-\b_0r^2l/2}
+B_0r^2C'_1r^2e^{-\b_0r^2l/2}\\
&+B_0rC'_2r^2e^{-\b_0r^2l/2}
+e^{-\b_0}C'_3r^{\min\{3,\a\}}e^{-\b_0r^2l/2}\\
&\le\left([e^{-\b_0}C'_3+B_0(C_0'+rC'_1+C'_2)]
e^{\b_0r^2/2}\right)\cdot r^{\min\{3,\a\}}e^{-\b_0r^2(l+1)/2}.
\end{align*}
Now choose the constants in such a way that
\[
C'_1\ge \frac{2C_0'B_0e^{\b_0r^2/2}}{(1-e^{-\b_0+\b_0r^2/2})},
C'_2\ge \frac{2C_0'B_0e^{\b_0r^2/2}}{(1-e^{-\b_0+\b_0r^2/2})},
C'_3\ge \frac{2B_0(C_0'+C'_2)e^{\b_0r^2/2}}{(1-e^{-\b_0+\b_0r^2/2})}
\]
and observe that the claim holds for $l+1$ if $r$ is sufficiently small.
\end{proof}

\begin{proof}[Proof of Lemma \ref{lm_Papprox}]
By the triangle inequality, we have
\begin{align*}
\|\psi'_l-S_r^l\psi_0\|_2&\le
\sum_{k=0}^{l-1}\|S_r^{l-k-1}(S_r-PS_rP)(PS_rP)^k\psi'_0\|_2\\
&\le\sum_{k=0}^{l-1}\|(S_r-PS_rP)\psi'_k\|_2
\end{align*}

To estimate the terms, we write:
\begin{align}
\|(S_r-PS_rP)\psi'_k\|_2
&=\left\|S_rP_\infty\psi'_k+\sum_{j=0}^3(S_rP_j-PS_rP_j)P_j\psi'_k\right\|_2\nonumber\\
&\le\|P_\infty\psi'_k\|_2+\sum_{j=0}^3\|P_\infty S_rP_j\|
\cdot\|P_j\psi'_k\|_2.\label{eq_S-PSP1}
\end{align}
Recall that $P_\infty$ is the projection to the complement
of $\HH_0\oplus\ldots\oplus\HH_3$.
Note that $P_\infty\psi'_k=0$ for $k\ge1$, since
$\psi_k'\in \HH_0\oplus\ldots\oplus\HH_3$.

We use Lemmata \ref{lm_P} and \ref{lm_norm1}.
For $1\le k\le C\log(r^{-1}|x_0|+2)$, we have
\be\label{eq_S-PSP2}
\|(S_r-PS_rP)\psi'_k\|_2<
C(r^{\min\{4,\a\}}+|x_0|^3r^4e^{-ck}),
\ee
while for $k=0$, we have to add $\|P_\infty\psi'_k\|_2\le C|x_0|^4r^4$ to the above estimate.
For $k\ge C\log(r^{-1}|x_0|+2)$, we have
\be\label{eq_S-PSP3}
\|(S_r-PS_rP)\psi'_k\|_2<
Cr^{\min\{4,\a\}}e^{-cr^2k}.
\ee
Summing for $k$, we get the statement of the lemma.

When $\a=2$, the constants in Lemma \ref{lm_norm1} are arbitrarily small
as $r\to0$.
If we plug these in \eqref{eq_S-PSP1} we see that the constants in
 \eqref{eq_S-PSP2} and  \eqref{eq_S-PSP3} are also arbitrarily small.
\end{proof}

\begin{proof}[Proof of Lemma \ref{lm_psiapprox}]
By the triangle inequality:
\begin{align*}
\|\psi_l-\psi_l'\|_2&=\|(P_0S_rP_0)^l\psi_0-(PS_rP)^l\psi_0\|_2\\
&\le\sum_{k=0}^{l-1}
\|(P_0S_rP_0)^{l-k-1}(P_0S_rP_0-PS_rP)(PS_rP)^k\psi_0\|_2\\
&\le\sum_{k=0}^{l-2}\|P_0S_rP_0\|^{l-k-1}\cdot
\|(P_0S_rP_0-P_0S_rP)\psi'_k\|_2\\
&\quad+\|(P_0S_rP_0-PS_rP)\psi'_{l-1}\|_2.
\end{align*}

As in the previous proof, we write
\[
\|(P_0S_rP_0-P_0S_rP)\psi'_k\|_2\le\sum_{j=1}^3\|P_0S_rP_j\|\cdot
\|P_j\psi'_k\|_2.
\]
Again, we use Lemmata \ref{lm_P}, \ref{lm_norm1}, \ref{lm_normlinear}
and the estimate $\|P_0S_rP_0\|\le1-cr^2$, which follows from
Lemma \ref{lm_TaylorF}.
For $k\le\log(r^{-1}|x_0|+2)$ we can write
\[
\|P_0S_rP_0\|^{l-k-1}\cdot
\|(P_0S_rP_0-P_0S_rP)\psi'_k\|_2\le Ce^{-cr^2l}(|x_0|r^2e^{-ck}+r^3)
\]
While for $k\ge\log(r^{-1}|x_0|+2)$ we get
\[
\|P_0S_rP_0\|^{l-k-1}\cdot
\|(P_0S_rP_0-P_0S_rP)\psi'_k\|_2\le Cr^3e^{-cr^2(l-1)}
\]
and
\begin{align*}
\|(P_0&S_rP_0-PS_rP)\psi'_{l-1}\|_2\le
\|(P_0S_rP_0-P_0S_rP)\psi'_{l-1}\|_2\\
&+\|(P_0S_rP-PS_rP)\psi'_{l-1}\|_2
\le Cr^3e^{-cr^2(l-1)}+Cr^2e^{-cr^2(l-1)}.
\end{align*}
Summing up for $k$, we get the first statement of the lemma.

If $\mu$ is symmetric then $S_r$ is selfadjoint, hence
\[
\|P_0S_rP_1\|=\|P_1S_rP_0\|\le Cr^2.
\]
Using this instead of Lemma \ref{lm_norm1}, we get
\[
\|P_0S_rP_0\|^{l-k-1}\cdot
\|(P_0S_rP_0-P_0S_rP)\psi'_k\|_2\le Cr^4e^{-cr^2(l-1)},
\]
and the better estimate follows after summation.

If $\mu$ satisfies $(E)$ instead, then the better estimate in
Lemma \ref{lm_P} gives
\[
\|P_0S_rP_0\|^{l-k-1}\cdot
\|(P_0S_rP_0-P_0S_rP)\psi'_k\|_2\le Cr^{\min\{4,\a+1\}}e^{-cr^2(l-1)}.
\]
\end{proof}

\subsection{Proof of Lemma \ref{lm_psi}}\label{sc_lmpsi}

Again, we only show the first inequality in the lemma, the second
follows along the same lines by applying the improved estimate in
Lemma \ref{lm_TaylorF}.

Using the Taylor series expansion of $\psi_0$ together with $(C)$
and finite second moments, we see that
\be\label{eq_proofF1}
\psi_0(\xi)=1+O(|x_0|^2r^2),
\ee
where the implied constant only depends on $\mu$.
Furthermore, we have by (\ref{eq_Sr0}) that
\be\label{eq_proofF2}
\psi_l(\xi)=\left[\prod_{j=0}^{l-1} F(\t_0^{-j}\xi)\right]\psi_0(\t_0^{-l}\xi),
\ee
where $F$ is given by (\ref{eq_defF}).

Let $\Delta$ be the quadratic form appearing in Lemma \ref{lm_TaylorF}.
Define $\Delta_0$ to be its symmetrization by the group generated
by $\t_0$, i.e.
\[
\Delta_0(\xi,\xi):=\lim_{n\to\infty}\frac{1}{n}\sum_{i=0}^{n-1}
\Delta(\t_0^{-i}\xi,\t_0^{-i}\xi).
\]

In light of \eqref{eq_proofF1} and \eqref{eq_proofF2} it is enough to show that
\be\label{eq_lmFgoal}
\left|\prod_{j=0}^{l-1} F(\t_0^{-j}\xi)-e^{-lr^2\Delta_0(\xi,\xi)}\right|
<Ce^{-clr^2}r^{\min\{1,\a-2\}}
\ee
for all $\xi$.

The rest of the proof is devoted to this inequality.
By Lemma \ref{lm_TaylorF}, we have
\be\label{eq_proofF3}
\sum_{j=0}^{l-1}\log F(\t_0^{-j}\xi)=-r^2\sum_{j=0}^{l-1}
\Delta(\t_0^{-j}\xi,\t_0^{-j}\xi)+O(lr^{\min\{3, \a\}}).
\ee
Denote by $W$ the space of quadratic forms on $\R^d$, and denote by
$\Theta_0\in {\rm End}(W)$ the linear transformation induced by
$\theta_0$.
It is easily seen that $\Theta_0$ is diagonalizable and all its
eigenvalues are on the unit circle of $\C$.
Denote by $W_0$ the 1-eigenspace of $\Theta_0$.
Hence $\Delta_0$ is the projection of $\D$ to $W_0$.
Then on the orthogonal complement of $W_0$, we have
\[
\sum_{j=0}^{l-1}\Theta_0^{-j}=\frac{\Theta_0^{-l}-1}{\Theta_0^{-1}-1}.
\]

Thus it follows that
\[
\sum_{j=0}^{l-1}
\Delta(\t_0^{-j}\xi,\t_0^{-j}\xi)
=\sum_{j=0}^{l-1}\Theta_0^{-j}\Delta(\xi,\xi)
=l\Delta_0(\xi,\xi)+O(1).
\]
The implied constant depends on the distance of the non-trivial
eigenvalues of $\Theta_0$ to 1.

If we combine our inequalities, we get
\[
\sum_{j=0}^{l-1}\log F(\t_0^{-j}\xi)=-lr^2\Delta_0(\xi,\xi)
+O(r^2+lr^{\min\{3, \a\}}).
\]
This immediately implies that there is a constant $c>0$ such that
\[
\prod_{j=0}^{l-1} F(\t_0^{-j}\xi)<e^{-clr^2}.
\]
If we use the inequality $|e^A-e^B|<(A-B)\max\{e^A,e^{B}\}$, then
we get
\be\label{eq_proofF4}
\left|\prod_{j=0}^{l-1} F(\t_0^{-j}\xi)-e^{-lr^2\Delta_0(\xi,\xi)}\right|
<Ce^{-clr^2}(r^2+lr^{\min\{3, \a\}}).
\ee
To obtain (\ref{eq_lmFgoal}) and hence the lemma, we only need to
note that
\[
e^{-clr^2}l\le \frac{2}{c}\cdot e^{-clr^2/2}r^{-2}.
\]

If $\a=2$, the term $O(lr^{\min\{3,\a\}})$ in \eqref{eq_proofF3} can be improved to
$o(lr^2)$ by Lemma \ref{lm_TaylorF}.
Hence the right hand side of \eqref{eq_proofF4} can be improved to $o(1)$ as claimed.

\subsection{Some improvements using \texorpdfstring{$(SSR)$}{(SSR)}}\label{sc_improv}

The purpose of this section is to give the following slight improvement
of the bounds in Proposition \ref{pr_low}.
The proof depends on the results of Section \ref{sc_high}, so it is
important to note that Proposition \ref{pr_low} itself is enough
for the arguments of Section \ref{sc_high}.
In fact, we can even get Theorem \ref{th_main} without the results of
this section
at the modest expense of multiplying the first error term
by $\log l$.

\begin{prp}\label{pr_low2}
Assume that $\mu$ is almost non-degenerate, has finite moments of order
$\a\ge2$ and satisfies $(C)$ and $(SSR)$.
Then there are constants $C$, $c$ and a symmetric positive definite quadratic
form
$\Delta(\xi,\xi)$ on $\R^d$ invariant under the action of $K$ and
$\t_0$, such that the following holds
\be\label{eq_Kimproves}
\|S_r^l\psi_0-e^{-r^2l\Delta}\|_2<C(r^{\min\{1,\a-2\}}+|x_0|^2r^2)
\cdot(e^{-clr^2}+r^{10d})
\ee
for $r<l^{-1/3}$.
The constants $C,c$ and the form $\Delta$ depend only on $\mu$.
Moreover, if $\mu$ is symmetric or satisfies $(E)$, then we have
the better
bound:
\[
\|S_r^l\psi_0-e^{-r^2l\Delta}\|_2<C(r^{\min\{2,\a-2\}}+|x_0|^2r^2)
\cdot (e^{-clr^2}+r^{10d}).
\]
\end{prp}

The quantities $l^{-1/3}$ and $r^{10d}$ appearing in the proposition could be replaced by
other powers of $l$ and $r$.
Notice that the estimate differs only by the factor $(e^{-clr^2}+r^{10d})$
compared to Proposition \ref{pr_low}.
We indicate how to modify the argument in the previous sections to obtain
this improvement.
We only need to sharpen Lemma \ref{lm_Papprox} by the same factor.

First we note, that Proposition \ref{pr_low2} gives an improvement
only if $l>r^{-2}$ and recall that $l<r^{-3}$ is assumed in the
proposition.
Hence, we only consider the range $r^{-2}<l<r^{-3}$.

Similarly to the proof of Lemma \ref{lm_Papprox}, we write
\begin{align*}
\|S_r^l\psi_0-(PS_rP)^l\psi_0\|_2
&\le\sum_{j=0}^{l-1}
\|S_r^{l-j-1}(S_r-PS_rP)\psi_j'\|_2.
\end{align*}
To obtain the improvement of Lemma \ref{lm_Papprox},
we need to show the following improved
estimates for the terms in the above sum:
\be\label{eq_termgoal}
\|S_r^{l-j-1}(S_r-PS_rP)\psi_j'\|_2\le C(r^{\min\{4,\a\}}+|x_0|^2r^2e^{-cj})
\cdot(e^{-clr^2}+r^{20d}).
\ee

We have already seen that
\[
\|(S_r-PS_rP)\psi_j'\|_2<C(r^{\min\{\a,4\}}+|x_0|^2r^2e^{-cj})e^{-cr^2j}.
\]
We utilize Proposition \ref{pr_high} to estimate the norm of this function
when $S_r$ is applied.

If we have
\[
\|S_r^{m}(S_r-PS_rP)\psi_j'\|_2\le C(r^{\min\{4,\a\}}+|x_0|^2r^2e^{-cj})
\cdot r^{20d}
\]
for any $m<l-j-1$, then (\ref{eq_termgoal}) immediately follows.
In the opposite case, we have $\|S_r^{m}(S_r-PS_rP)\psi_j'\|_2\ge r^{4+20d}$.
We will show below that in this case we have
\be\label{eq_Lipest}
\left\|\frac{S_r^{m}(S_r-PS_rP)\psi_j'}{\|S_r^{m}(S_r-PS_rP)\psi_j'\|_2}\right\|_{\Lip(K)}
\le Cr^{-20d-4}m.
\ee
Since $r^2<\log^{-3}((r+1)Cr^{-20d-4}m+2)$, we have
\[
\|S_r^{m+1}(S_r-PS_rP)\psi_j'\|_2<e^{-cr^2}\|S_r^{m}(S_r-PS_rP)\psi_j'\|_2
\]
by Proposition \ref{pr_high}.
Repeated application of this inequality gives the claim (\ref{eq_termgoal}).

We turn to the proof of \eqref{eq_Lipest},
which finishes the proof of Proposition \ref{pr_low2}.
We introduce some notation:
Let $\f\in C(S^{d-1})$ and $\xi\in S^{d-1}$ and write
\[
\f^\xi(\t):=\f(\t\xi)\in C(K).
\]
We note that
\be\label{eq_normcompare}
\sup_{\xi\in S^{d-1}}\|\f^{\xi}\|_{\Lip}
\le\|\f\|_{\Lip(K)}
\le\|\f\|_\infty+\sup_{\xi\in S^{d-1}}\|\f^{\xi}\|_{\Lip}.
\ee
Write
\[
\PP_3^{\xi}:=\{\f^{\xi}:\f\in\PP_3\}.
\]
This is the space of polynomials of degree at most $3$
restricted to the $K$-orbit of $\xi$ pulled back to the group $K$.
Denote by $P^{\xi}$ the orthogonal projection to the space $\PP_3^{\xi}$ in $L^2(K)$.
We note that a function $\f\in C(S^{d-1})$ is in $\HH_0\oplus\ldots\oplus\HH_3$
if and only if we have
$\f^{\xi}\in\PP_3^\xi$ for all $\xi\in S^{d-1}$.
Moreover, $(P\f)^{\xi}=P^{\xi}\f^\xi$ for every $\f\in C(S^{d-1})$.

We first estimate $\|\psi_j'\|_{\Lip(K)}$.
Note that any function in $\PP_3^{\xi}$ is a polynomial of degree at most $C_d$ in the entries
of the matrices representing the elements of $K<\O(d)$.
Here $C_d$ is a number depending only on $d$.
Then the space spanned by all $P^\xi$ for $\xi\in S^{d-1}$ is finite dimensional, hence
we can write $\|\f\|_{\Lip}\le C\|\f\|_2$ for any function $\f\in \PP_3^\xi$
with a number $C$ independent of $\xi$.
This follows from the fact that any two norms on a finite dimensional space are equivalent.

We show that $\|(\psi_j')^{\xi}\|_2\le 1$ for every $\xi\in S^{d-1}$.
By the previous paragraph, this implies that $\|(\psi_j')^{\xi}\|_\Lip\le C$ and also
$\|\psi_j'\|_{\Lip(K)}\le C$ by \eqref{eq_normcompare}.
Clearly $(\psi_0')^{\xi}=\psi_0^{\xi}$ is of $L^2$-norm 1.
The claim will follow by induction, if we show that $\|(P\f)^\xi\|_2\le 1$
and $\|(S_r\f)^\xi\|_2\le 1$ for all functions $\f\in C(S^{d-1})$ that satisfy
$\|\f^\xi\|_2\le 1$ for all $\xi\in S^{d-1}$.
The first inequality holds since $P^{\xi}$ is a projection.
For the second inequality, we observe
\be\label{eq_psixi}
(S_r\f)^{\xi}(\s)=
\int e(r\langle \s\xi,v(\g)\rangle)\f^{\t_0^{-1}\xi}(\t(\g)^{-1}\s\t_0)d\mu(\g).
\ee
Recall that $\supp\t(\mu)\subset \t_0 K$, and $\t_0$ normalizes $K$ so $\t(\g)^{-1}\s\t_0\in K$
for $\g\in \supp\mu$.
The right hand side of \eqref{eq_psixi} is simply the average of functions of norm 1,
so the inequality we need holds.

The proof of \eqref{eq_Lipest} will be completed by the following lemma.
\begin{lem}\label{lm_LipK}
We have
\[
\|S_r\f\|_\infty\le\|\f\|_\infty\quad{\rm and}\quad\|S_r\f\|_{\Lip(K)}\le Cr\|\f\|_\infty+\|\f\|_{\Lip(K)}
\]
for any function $\f\in C(S^{d-1})$, where $C$ is a number depending only on $\mu$.
\end{lem}
\begin{proof}
The first claim is trivial.
For the second one, we write
\begin{align*}
|\rho_r(\g)&\f(\s\xi)-\rho_r(\g)\f(\xi)|\\
&=|e(r\langle v(\g),\s\xi\rangle)\f(\t(\g^{-1})\s\xi)
-e(r\langle v(\g),\xi\rangle)\f(\t(\g^{-1})\xi)|\\
&\le|e(r\langle v(\g),\s\xi\rangle)-e(r\langle v(\g),\xi\rangle)||\f(\t(\g^{-1})\xi)|\\
&\quad+|\f(\t(\g^{-1})\s\xi)-\f(\t(\g^{-1})\xi)|
\end{align*}
The first term is estimated by $Cr|v(\g)|\cdot|\xi-\s\xi|\cdot\|\f\|_\infty$.
For the second term we observe that $\t(\g^{-1})\s\xi=[\t(\g^{-1})\s\t(\g)]\t(\g^{-1})\xi$,
hence
\[
|\f(\t(\g^{-1})\s\xi)-\f(\t(\g^{-1})\xi)|\le\|\f\|_{\Lip(K)}\cdot\dist(1,\s).
\]

We integrate $\g$ and use the moment condition:
\[
|S_r\f(\s\xi)-S_r\f(\xi)|\le(Cr\|\f\|_\infty+\|\f\|_{\Lip(K)})\cdot\dist(1,\s).
\]
This proves the lemma.
\end{proof}

\section{The main theorem}
\label{sc_proof}

We turn to the proof of the main result of this paper,
Theorem \ref{th_main}.
As usual, we denote by $\mu$ the common law of $X_{i}$.
By assumption, $\mu$ has finite moments of order $\a>2$
and satisfies $(SSR)$.
Without loss of generality, we can replace $\mu$ by $\mu^{*(l_0)}$
for some fixed integer $l_0$, hence by Lemmata \ref{lm_almostnd}
and \ref{lm_normalize},
we can assume that $\mu$ is almost non-degenerate and that $K$
is normalized by $\t_0$.
Furthermore, we assume that $\mu$ also satisfies $(C)$ and prove
the estimates with $y_0=0$.
Lemma \ref{lm_O} in the previous section shows that we can
reduce the general case of the theorem to this one by changing
the coordinate system.

Denote by $\nu_l=\mu^{*(l)}.\d_{x_0}$ the distribution of the random
walk after $l$ steps starting from the point $x_0$.
To evaluate the left hand sides of the formulae in the statement
in Theorem \ref{th_main}, we use Plancherel's formula:
\[
\E[f(Y_l)]=\int f(x)d\nu_l=\int \wh f(\xi)\wh\nu_l(\xi)d\xi.
\]

We break the latter integral into two regions.
First we consider $|\xi|<l^{-1/3}$ and use Proposition \ref{pr_low2}
in this region.

Recall from Section \ref{sc_nota}, that $\Res_r: C(\R^d)\to C(S^{d-1})$
is the restriction to the sphere of radius $r$ and that
\[
\int_{|\xi|=r}|\f(\xi)|^2d\xi=r^{d-1}\Vol(S^{d-1})\|\Res_r \f\|_2^2.
\]
(The factor $\Vol(S^{d-1})$ is due to our normalization convention for the $L^2$-norm
on $S^{d-1}$.)
Recall also that
\[
\psi_0(\xi)=\psi_{0,r}(\xi)=
e(r\langle x_0,\xi\rangle)=\Res_r(\wh{\d_{x_0}})(\xi),
\]
and
\[
\Res_r\wh{\nu_l}=S_r^{l}\psi_{0,r}.
\]

For $r\le l^{-1/3}$, we write:
\begin{align}
\int_{|\xi|=r} \wh f(\xi)&\wh\nu_l(\xi)d\xi
=r^{d-1}\int_{S^{d-1}}[\Res_r\wh f](\xi)\cdot
[S_r^l \psi_{0,r}](\xi)d\xi\nonumber\\
&=\int_{|\xi|=r}\wh f(\xi)e^{-l\D(\xi,\xi)}d\xi
+r^{d-1}\int_{S^{d-1}}[\Res_r\wh f](\xi)\Psi(\xi)d\xi,\label{eq_error1}
\end{align}
where $\D(\xi,\xi)$ is the quadratic form that appears
in Proposition \ref{pr_low2} and
$\Psi=S_r^l\psi_0-e^{-r^2l\Delta}$.
By Proposition \ref{pr_low2},
\[
\|\Psi\|_2\le
C(r^{\min\{1,\a-2\}}+|x_0|^2r^2)(e^{-clr^2}+r^{10d}).
\]
Using $|\wh f(\xi)|\le\|f\|_1$ and the Cauchy-Schwartz inequality,
we can bound the second term in (\ref{eq_error1}) by
\[
Cr^{d-1}(r^{\min\{1,\a-2\}}+|x_0|^2r^{2})(e^{-clr^2}+r^{10d})\|f\|_1.
\]
Integrating for $0\le r\le l^{-1/3}$, we can write:
\begin{align}
&\int_{|\xi|\le l^{-1/3}} \wh f(\xi)\wh\nu_l(\xi)d\xi\nonumber\\
&\quad=\int_{\xi\in\R^d}\wh f(\xi)e^{-l\D(\xi,\xi)}d\xi
+O(l^{-\frac{d+\min\{1,\a-2\}}{2}}+|x_0|^2l^{-\frac{d+2}{2}})\|f\|_1.\label{eq_lowfrq}
\end{align}

It is well known that $e^{-l\D(\xi,\xi)}$ is the Fourier transform
of a Gaussian measure, i.e. there is a quadratic form $\D'$ and a
constant $C_{\D'}$ such that
\[
\int \wh f(\xi)e^{-l\D(\xi,\xi)}d\xi=
C_{\Delta'}l^{-d/2}\int f(x)e^{-\D'(x,x)/l}dx.
\]

We recognize the first term on the right
of (\ref{eq_lowfrq}) as the main term in Theorem \ref{th_main}, while
the second one is the first error term.
It is also clear that if $\mu$ is symmetric or satisfies $(E)$
and we use the improved bounds of Proposition \ref{pr_low2}, then
we get the improved error term claimed in the theorem.

It is left to show that
\be\label{eq_lastgoal}
\left|\int_{|\xi|\ge l^{-1/3}} \wh f(\xi)\wh\nu_l(\xi)d\xi\right|\le
Ce^{-cl^{1/4}}\|\f\|_W^{2,(d+1)/2},
\ee
and the proof will be finished.

To this end, we prove a lemma using Proposition \ref{pr_high}.
\begin{lem}\label{lm_high}
Let $l$ be an integer and
suppose that $e^{l^{1/4}}>r>l^{-1/3}$ and $|x_0|<e^{l^{1/4}}$.
As above, write
$\psi_{0,r}(\xi)=e(r\langle x_0,\xi\rangle)$ for $|\xi|=1$.
There is a constant $c$ depending only on $\mu$ such that
\[
\|S_r^l\psi_{0,r}\|_2\le e^{-cl^{1/4}}.
\]
\end{lem}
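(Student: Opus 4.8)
The plan is to iterate Proposition \ref{pr_high}, keeping track at each step of the Lipschitz norm of the function to which it is applied. Although $\|S_r\|$ need not be strictly less than $1$, estimate (\ref{eq_prhigh}) provides a genuine contraction provided the Lipschitz norm of the running iterate grows at most exponentially in $l^{1/4}$; the hypotheses $r<e^{l^{1/4}}$ and $|x_0|<e^{l^{1/4}}$ are exactly what is needed to guarantee this.

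The first step is a crude Lipschitz bound. Since $\rho_r(\g)$ multiplies a function by the unimodular factor $e(r\langle\xi,v(\g)\rangle)$, whose Lipschitz constant on $S^{d-1}$ is $O(r|v(\g)|)$, and precomposes it with the rotation $\xi\mapsto\t(\g)^{-1}\xi$ of $S^{d-1}$, we get $\|\rho_r(\g)\f\|_\infty=\|\f\|_\infty$ and $\|\rho_r(\g)\f\|_\Lip\le\|\f\|_\Lip+Cr|v(\g)|\,\|\f\|_\infty$. Averaging over $d\mu(\g)$ and using that $M:=\int|v(\g)|d\mu(\g)<\infty$ (a consequence of the second moment hypothesis) gives $\|S_r\f\|_\infty\le\|\f\|_\infty$ and $\|S_r\f\|_\Lip\le\|\f\|_\Lip+CrM\|\f\|_\infty$. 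As $\|\psi_{0,r}\|_\infty=1$ and $\|\psi_{0,r}\|_\Lip\le 2\pi r|x_0|$, an induction on $k$ yields $\|S_r^k\psi_{0,r}\|_\infty\le 1$ and $\|S_r^k\psi_{0,r}\|_\Lip\le 2\pi r|x_0|+CrMk$ for all $k\ge 0$. For $k\le l$ and $r,|x_0|<e^{l^{1/4}}$ the right-hand side is at most $e^{3l^{1/4}}$ once $l$ exceeds an absolute threshold (the factor $k\le l$ being absorbed into the exponential).

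The second step is a contraction argument, run by contradiction. Suppose $\|S_r^l\psi_{0,r}\|_2>e^{-c_0l^{1/4}}$ for a small constant $c_0$ to be fixed. Since $\|S_r\|\le 1$, the same lower bound holds for $\|S_r^k\psi_{0,r}\|_2$ for every $0\le k\le l$; in particular these are nonzero, so $\f_k:=S_r^k\psi_{0,r}/\|S_r^k\psi_{0,r}\|_2$ is a well-defined unit vector with $\|\f_k\|_\Lip\le e^{3l^{1/4}}e^{c_0l^{1/4}}=e^{(3+c_0)l^{1/4}}$, whence $\log^3(\|\f_k\|_\Lip+2)\le(4+c_0)^3l^{3/4}$ for $l$ large. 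Moreover $r>l^{-1/3}$ forces $r^2>l^{-2/3}\ge l^{-3/4}$, so the minimum in (\ref{eq_prhigh}) is at least $(4+c_0)^{-3}l^{-3/4}$, and Proposition \ref{pr_high} gives $\|S_r\f_k\|_2\le 1-c(4+c_0)^{-3}l^{-3/4}$, with $c$ the constant furnished there (which we may assume $\le 1$). Hence $\|S_r^{k+1}\psi_{0,r}\|_2\le(1-c(4+c_0)^{-3}l^{-3/4})\|S_r^k\psi_{0,r}\|_2$, and multiplying these inequalities for $k=0,\dots,l-1$ and using $\|\psi_{0,r}\|_2=1$ gives
\[
\|S_r^l\psi_{0,r}\|_2\le\Bigl(1-\frac{c}{(4+c_0)^3\,l^{3/4}}\Bigr)^{l}\le\exp\Bigl(-\frac{c}{(4+c_0)^3}\,l^{1/4}\Bigr).
\]
Choosing $c_0$ so small that $c_0<c(4+c_0)^{-3}$ (for instance $c_0=c/200$) contradicts the assumed lower bound, so in fact $\|S_r^l\psi_{0,r}\|_2\le e^{-c_0l^{1/4}}$ for $l$ past the threshold; the finitely many smaller values of $l$ are handled by further shrinking $c_0$, using that $\|S_r^l\psi_{0,r}\|_2<1$ there.

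The main obstacle is making this bootstrap close: the Lipschitz bound on $\f_k$, and hence the effective contraction rate extracted from (\ref{eq_prhigh}), depends on the very lower bound $e^{-c_0l^{1/4}}$ we wish to contradict, so one must check that the resulting rate $c(4+c_0)^{-3}$ still exceeds $c_0$. Since $c_0$ enters only through the benign perturbation $4\mapsto 4+c_0$ inside a fixed power, this is immediate once $c_0$ is small. Two further points deserve attention: the hypothesis $r>l^{-1/3}$ is used precisely so that $r^2$ dominates $l^{-3/4}$ and never becomes the binding term in (\ref{eq_prhigh}) (for smaller $r$ the per-step gain would be too weak to survive the $l$-fold product), and the upper bounds $r,|x_0|<e^{l^{1/4}}$ are used precisely to keep $\log^3$ of the running Lipschitz norm of size $O(l^{3/4})$.
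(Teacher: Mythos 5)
Your proposal is correct and is essentially the paper's own argument: assume the conclusion fails, note that $\|S_r\|\le 1$ propagates the lower bound to all intermediate iterates, bound the Lipschitz norm of the normalized iterates by $e^{O(l^{1/4})}$ using the moment hypothesis together with $r,|x_0|<e^{l^{1/4}}$, apply Proposition \ref{pr_high} once per step (with $r>l^{-1/3}$ ensuring the $\log^{-3}$ term governs the minimum at scale $l^{-3/4}$), and multiply to reach a contradiction for a suitably small constant. The only cosmetic difference is that you get the Lipschitz control by iterating the per-step estimate $\|S_r\f\|_{\Lip}\le\|\f\|_{\Lip}+Cr\|\f\|_\infty\int|v(\g)|d\mu(\g)$, whereas the paper reads it off from $S_r^j\psi_{0,r}=\Res_r\wh\nu_j$ via Lemma \ref{lm_uplength}; both yield the same $e^{O(l^{1/4})}$ bound.
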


\begin{proof}
Choose $1>c>0$ to be sufficiently small, to be specified below.
Assume to the contrary that the statement is false for
some $r,l$ and $x_0$.
Then for each  $j\le l$, we have
\[
\|S_r^{j}\psi_{0,r}\|_2> e^{-l^{1/4}},
\]
otherwise we get a contradiction from $\|S_r\|\le1$.
(Recall that $c<1$.)

To use Proposition \ref{pr_high}, we need to estimate the Lipschitz
norm of the function $S_r^{j}\psi_{0,r}$.
Using $\|\psi_{0,r}\|_\Lip\le Cr|x_0|$ and Lemma \ref{lm_LipK} repeatedly, we get
\[
\|S_r^{j}\psi_{0,r}\|_{\Lip(K)}\le Cr( j+|x_0|)\le Ce^{2l^{1/4}}
\]
for all $j\le l$.
Now define
$\f_j={S_r^{j}\psi_{0,r}}/{\|S_r^{j}\psi_{0,r}\|_2}$.

Then we have
\[
(r+1)\|\f_j\|_\Lip<Ce^{4l^{1/4}}.
\]

We can apply Proposition \ref{pr_high} for $\f_j$, and get
\[
\|S_r\f_j\|_2\le e^{-c'l^{-3/4}},
\]
where $c'$ is a number depending only on the constant
$c$ from Proposition \ref{pr_high}.
Note that $r^2\ge l^{-2/3}\ge l^{-3/4}$.

If we multiply these inequalities together for $1\le j\le l$, we get
\[
\|S_r^l\psi_{0,r}\|_2\le e^{-c'l^{1/4}},
\]
a contradiction if we choose $c$ to be less than $c'$.
\end{proof}

Similarly as above, we use the Cauchy-Schwartz inequality:
\begin{align*}
\int_{|\xi|=r} \wh f(\xi)&\wh\nu_l(\xi)d\xi
=r^{d-1}\int_{S^{d-1}}\Res_r\wh f(\xi)\cdot S_r^l\psi_{0,r}(\xi)d\xi\\
&\le r^{(d-1)/2}\left(\int_{|\xi|=r}|\wh f(\xi)|^2d\xi\right)^{1/2}
\cdot\Vol(S^{d-1})^{1/2}\|S_r^l\psi_{0,r}\|_2.
\end{align*}
We integrate for $r>l^{-1/3}$, and then use the Cauchy-Schwartz inequality
again:
\begin{align*}
\int_{|\xi|\ge l^{-1/3}}&\wh f(\xi)\wh\nu(\xi)d\xi
\le C\int_{l^{-1/3}}^{\infty}
\left(r^{d+1}\int_{|\xi|=r}|\wh f(\xi)|^2d\xi\right)^{1/2}
\cdot\left(r^{-1}\|S_r^l\psi_{0,r}\|_2\right)dr\\
&\le C
\left(\int_{|\xi|\ge l^{-1/3}}
|\wh f(\xi)|^2|\xi|^{d+1}d\xi\right)^{1/2}\cdot
\left(\int_{l^{-1/3}}^{\infty}\|S_r^l\psi_{0,r}\|_2^2r^{-2}dr\right)^{1/2}.
\end{align*}
The first integral on the right hand side is bounded by
$\|f\|_{W^{2,(d+1)/2}}$.
By Lemma \ref{lm_high}, we have
\[
\|S_r^l\psi_{0,r}\|_2^2r^{-1/2}<e^{-cl^{1/4}}
\]
for all $r$.
Indeed, one can use the lemma for $r\le e^{-l^{1/4}}$, and
simply $\|S_r^l\psi_{0,r}\|_2\le 1$ for larger $r$.
This implies (\ref{eq_lastgoal}), and Theorem \ref{th_main}
is proved.

\section{The Central Limit Theorem}
\label{sc_Pcentral}

The purpose of this section is to prove Theorem \ref{th_central}.
Recall the notation from Sections \ref{sc_intro} and \ref{sc_nota}.
We will deduce the theorem from Proposition \ref{pr_low} very similarly
to the methods of the previous section.

Notice that the limiting distribution of $Y_l/l^{1/2}$
does not depend on the starting point $x_0$.
Indeed, let $Y_l'$ be the random walk obtained from the same $X_l$,
but from a different point $x_0'$.
Since isometries preserve distance, we have
\[
|Y_l/\sqrt{l}-Y_l'/\sqrt{l}|=|x_0-x_0'|/l^{1/2}\to 0.
\]
For the rest of this section, take $x_0=0$.

Denote by $W\subset\R^d$ the linear subspace of vectors fixed by $K$.
By Lemma \ref{lm_O}, we can choose the origin in such a way, that
\[
v_0:=\E(Y_1)=\int\g(0)d\mu(\g)\in W.
\]
Define the random isometries $X_l'$ by $X_l'(x)=X_l(x)-v_0$.
Since $v_0\in W$, we have
\be\label{eq_mean}
Y_l':=X'_l(X'_{l-1}(\ldots(0)))=Y_l-lv_0.
\ee
Denote by $\mu'$ the law of the random isometry $X_1'$.
Then $\mu'$ satisfies $(C)$.
In what follows, we assume that $\mu=\mu'$, and prove the theorem
with $v_0=0$.
In light of (\ref{eq_mean}), this implies the theorem without the
assumption $\mu=\mu'$, as well.

Let $\nu_l=\mu^{*(l)}.\d_0$ be the law of $Y_l$.
Denote by $\psi_0\in L^2(S^{d-1})$ the constant function $\psi_0(\xi)\equiv1$.
Similarly to Section \ref{sc_proof}, we can write
\[
\Res_r\wh\nu_l(\xi)=S_{r}^l\psi_0(\xi).
\]

Fix an arbitrary constant $R>0$.
Let $\Delta$ be the quadratic form from Proposition \ref{pr_low}.
Let $\l$ be the Gaussian measure with Fourier transform
\[
\wh\l(\xi)=e^{-\Delta(\xi,\xi)}.
\]
Proposition \ref{pr_low} implies
\[
\|\Res_{r/\sqrt{l}}\wh\nu_l-\Res_r\wh\l\|_2\to 0
\]
as $l\to\infty$, uniformly for $r<R$.
Let $f$ be a function, such that $\wh f(\xi)=0$ for $|\xi|>R$.
Then by Plancherel's formula and the Cauchy-Schwartz inequality:
\begin{align*}
&\left|\int f(x/\sqrt{l})d\nu_l(x)-\int f(x)d\l(x)\right|
=\left|\int\wh f(\xi)(\wh\nu_l(\xi/\sqrt{l})-\wh\l(\xi))d\xi\right|\\
&\qquad\qquad\qquad\qquad\le\|f\|_1\cdot\int_{|\xi|<R}|\wh\nu_l(\xi/\sqrt{l})-\wh\l(\xi)|d\xi\\
&\qquad\qquad\qquad\qquad\le\|f\|_1\cdot C_{R,d}\int_{0}^R
\|\Res_{r/\sqrt{l}}\wh\nu_l-\Res_r\wh\l\|_2 dr
\to 0,
\end{align*}
where $C_{R,d}$ is a constant depending on $R$ and $d$.

Since we can approximate any continuous function by those which
have compactly supported Fourier transform, the proof is complete.

\section{The Local Limit Theorem}
\label{sc_Plocal}

We finish the paper with the proof of Theorem \ref{th_local}.
The proof is again based on Plancherel's formula and estimates on
 $\wh\nu_l$, the Fourier transform of the law of the random walk.
For the frequency range $|\xi|\le l^{-1/2+\e}$, we again use Proposition \ref{pr_low}.
However,  we do not assume that $\mu$ satisfies $(SSR)$, so
we need to find a suitable replacement for Proposition \ref{pr_high}
in estimating $\wh\nu_l(\xi)$ in the frequency range $ l^{-1/2+\e}\le|\xi|\le R$,
where $R$ is an arbitrary fixed number.
Our bounds will depend on $R$ in an uncontrolled fashion,
so we will be able to conclude the 
Local Limit Theorem only on scales $O(1)$ in contrast to
Theorem \ref{th_main}.

We introduce some notation.
Recall that $G$ is the closure of the
group generated by $\supp\wt\mu*\mu$ and
$\supp\mu$ is contained in the coset $\g_0G$.
We denote by $K$ the closure of $\t(G)$ and by $K^\circ$ its connected
component.
By Lemma \ref{lm_normalize}, we can assume that $K$ is normalized
by $\t(\g_0)$.

It is easy to see, that we can decompose $\R^d$ as an orthogonal
sum of subspaces $V_{ss}\oplus V_{a}\oplus V_o$, such that the
action of $K^\circ$ is semi-simple on $V_{ss}$, Abelian on $V_a$ and
trivial on $V_o$.
Since $K^\circ$ is invariant under conjugation by elements of $K$
and $\t(\g_0)$, it follows that these subspaces
are invariant under $K$ and $\t(\g_0)$, as well.
We denote by $S^{i}$ the unit sphere in $V_i$, where $i$ is
either $ss$, $a$ or $o$.
We denote by $\pi_i$ the orthogonal projection $\R^d\to V_i$.
We write $\t_i(\g)$ for the restriction of $\t(\g)$ to $V_i$ for $\g\in G$,
and we also write $v_i(\g)=\pi_i(v(\g))$.
In addition, by abuse of notation, we write $\pi_i(\g)$ for the isometry
$x\mapsto v_i(\g)+\t_i(\g)x$ on $V_i$.
In addition, we will denote by $\pi_i(\mu)$ the probability measure
on $\Isom(V_i)$ which is the pushforward of $\mu$ under $\pi_i$.

We give an estimate on $\wh\nu_l(\xi)$ in the region
$|\pi_{ss}(\xi)|+|\pi_{a}(\xi)|\ge l^{-1/2}\log^{1/2} l$
in Section \ref{sc_largeSSA}.
The methods will be similar to Section \ref{sc_high}.
We define unitary representations $\rho_{r_{ss},r_a,r_o}$ of $G$ and
consider operators similar to $S_r$.
We show that if a function is almost fixed by such an operator, then it must be almost fixed
by $\rho_r(\g)$ for pure translations $\g$ pointing in any direction in $V_{ss}\oplus V_a$.
The results of Section \ref{sc_high} can be reused to find translations in $V_{ss}$ and
the method of Guivarc'h \cite{Gui-uniform} can be used to produce translations in $V_a$.
The essence of the latter method is taking
commutators of isometries with commuting rotation part.

We estimate $\wh\nu_l(\xi)$ in the region
$|\pi_{ss}(\xi)|+|\pi_{a}(\xi)|\le l^{-1/2}\log^{1/2} l$
in Section \ref{sc_smallSSA}.
We need to use different methods, since it may happen that all pure translations in $G$
are orthogonal to $V_o$.
For example, consider the group generated by a $1$ parameter family of screw rotations
and all translations perpendicular to their axes.
If $G$ is this group, then $\mu$ will be non-degenerate.
So instead of finding translations, we will approximate $\wh\nu_l(\xi)$
by polynomials of a suitably large but fixed degree
in the $\pi_{ss}(\xi)$ and $\pi_a(\xi)$ variables using Taylor expansion.
This allows us to work with operators on finite dimensional spaces and use
continuity arguments.

We combine the above mentioned estimates to conclude
Theorem \ref{th_local} in Section \ref{sc_pfllt}.

\subsection{Estimates using translations}
\label{sc_largeSSA}

The purpose of this section is to prove the following estimate on the $L^2$ average of $\wh\nu_l$
on the direct product of spheres in $V_{ss}$, $V_a$ and $V_o$.

\begin{prp}\label{pr_large}
With the assumptions of Theorem \ref{th_local} and the notations explained at the beginning
of Section \ref{sc_Plocal} the following holds.
There are constants $C,c>0$ depending only on $\mu$ and $R$ such that
\begin{align*}
r_{ss}^{1-\dim V_{ss}}r_a^{1-\dim V_a}r_o^{1-\dim V_o}
&\mathop{\int}_{|\pi_{ss}(\xi)|=r_{ss},|\pi_{a}(\xi)|=r_{a},|\pi_{o}(\xi)|=r_{o}}
|\wh\nu_l(\xi)|^2 d\xi\\
&\quad\quad\quad\quad\quad\quad\quad\quad
\le C e^{-c\min\{(r_{ss}+r_a)^2l,l^{1/4}\}}
\end{align*}
holds for all $0<r_{ss},r_a,r_o<R$.
\end{prp}

We fix three non-negative real parameters $r_{ss},r_a$ and
$r_o$.
Analogously to $\rho_r$, we introduce the unitary representation
$\rho_{r_{ss},r_a,r_o}$ of the group generated by $G$ and $\g_0$
on the space $L^2(S^{ss}\times S^a\times S^o)$ via the following
formula:
\begin{align*}
\rho_{r_{ss},r_a,r_o}(\g)\f(\xi_{ss},\xi_a,\xi_o)
=e(r_{ss}\langle\xi_{ss}&,v_{ss}(\g)\rangle
+r_a\langle\xi_a,v_a(\g)\rangle+
r_o\langle\xi_o,v_o(\g)\rangle)\\
&\cdot\f(\t_{ss}(\g)^{-1}\xi_{ss},\t_a(\g)^{-1}\xi_a,\t_o(\g)^{-1}\xi_o).
\end{align*}
Here $\f\in L^2(S^{ss}\times S^a\times S^o)$, $\xi_{ss}\in S^{ss}$,
$\xi_{a}\in S^{a}$ and $\xi_o\in S^o$.
This representation corresponds to the action of $\g$ on the Fourier
transform of a measure restricted to the product of the spheres of
radii $r_{ss},r_a,r_o$ (resp.) in $V_{ss},V_a,V_o$ (resp.).
For a measure $\eta$ on the group generated by $G$ and $\g_0$, we define the operators
\be\label{eq_rhoofeta}
\rho_{r_{ss},r_a,r_o}(\eta)=\int\rho_{r_{ss},r_a,r_o}(\g)d\eta(\g)
\ee
acting on $L^2(S^{ss}\times S^a\times S^o)$.
These are analogues of $S_r$ and we prove the following estimate for them
similar to Proposition \ref{pr_high}.

\begin{prp}
\label{pr_highssa}
Suppose that $\mu$ is almost non-degenerate and has finite moments of order
$2$.
Then for every $R>0$, there is a constant $c>0$
depending only on $\mu$ and $R$ such that the following hold.

Let $R\ge r_{ss},r_a,r_o\ge 0$.
Let $\f\in \Lip(S^{ss}\times S^{a}\times S^o)$ with $\|\f\|_2=1$.
Then
\[
\|\rho_{r_{ss},r_a,r_o}(\mu)\f\|_2
\le1-c\min\{r_{ss}^2+r_a^2,{\log^{-3}(\|\f\|_\Lip+2)}\}.
\]
\end{prp}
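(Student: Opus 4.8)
The plan is to re-run the proof of Proposition~\ref{pr_high} inside the representation $\rho_{r_{ss},r_a,r_o}$ on $L^2(S^{ss}\times S^a\times S^o)$, decoupling the three factors. First I would symmetrise as at the beginning of Section~\ref{sc_high}: since $\|S_{r_{ss},r_a,r_o}\f\|_2^2=\langle\f,S^*S\f\rangle$ and $S^*S$ is the operator attached to $\wt\mu*\mu$, and since almost non-degeneracy, finiteness of second moments and (after recentring) $(C)$ all descend to $\wt\mu*\mu$, we may assume $\mu$ symmetric and $S_{r_{ss},r_a,r_o}$ self-adjoint and positive. Next I would put $\rho=\sqrt{r_{ss}^2+r_a^2}$, fix $l_1=[C_1(\rho^{-2}+\log^3(\|\f\|_\Lip+2))]$ and $l_2=[C_2(\rho^{-2}+\log^3(\|\f\|_\Lip+2))]$ with $1\ll C_1\ll C_2$ depending on $\mu$ and $R$, introduce $B(\e)=\{\g:\|\rho_{r_{ss},r_a,r_o}(\g)\f-\f\|_2<\e\}$, and assume for contradiction that $\mu^{*(l_i)}(B(\e))>9/10$ for $i=1,2$. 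The objective is to produce, for a prescribed unit vector $u_0\in V_{ss}\oplus V_a$, an element $\g_1'\in B(C\e)$ whose rotation part is trivial on $V_{ss}\oplus V_a$, with $|v_{ss}(\g_1')|^2+|v_a(\g_1')|^2<\rho^{-2}/4$ and with the component of $(v_{ss}(\g_1'),v_a(\g_1'))$ along $u_0$ of size $\gtrsim\rho^{-1}$; given such a $\g_1'$, the contradiction follows exactly as in the last paragraph of the proof of Proposition~\ref{pr_high} — concentrate $\f$ near $u_0$ and use that the multiplier $e(\langle\cdot,\cdot\rangle)$ is bounded away from $1$ on a set of fixed mass — and the usual comparison $\|S\f\|_2\le e^{-c/l_i}$ then unwinds, using the choice of $l_i$, to the asserted bound $1-c\min\{r_{ss}^2+r_a^2,\log^{-3}(\|\f\|_\Lip+2)\}$ with $c=c(\mu,R)$.

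For the $V_{ss}$-factor the chain of Lemmas~\ref{lm_uplength}--\ref{lm_shorten} goes through with only notational changes, with Theorem~A as the engine; this, together with the absence of nonzero $K^\circ$-fixed vectors in $V_{ss}$, is the only role of the hypothesis that $K^\circ$ acts semisimply on $V_{ss}$. Concretely: Lemma~\ref{lm_uplength} is unchanged; the analogue of Lemma~\ref{lm_rota} is obtained from Theorem~A, applied to the semisimple part of the rotation action on $V_{ss}$, to get $X\subset B(4\e)$ with $\t_{ss}(X)=\t_{ss}(K)$ and $|v(\g)|<C\sqrt{l_1}$ on $X$; the central limit theorem (Theorem~\ref{th_central}), applied to the projection of $Y_l$ to $V_{ss}\oplus V_a$ (whose limiting covariance is the restriction of the $K$-invariant positive definite form $\D$, hence non-degenerate there), yields the analogue of Lemma~\ref{lm_lowlength}; combining these as in Lemma~\ref{lm_translation} gives $\g_1\in B(5\e)$ with $\t_{ss}(\g_1)=1$ carrying a translation of length $\sim\sqrt{l_2}$ with $u_0$-component $\sim\sqrt{l_2}$; and conjugating the pure $V_{ss}$-translation $\pi_{ss}(\g_1)$ by the elements of $X$ rotates $v_{ss}(\g_1)$ by $\t_{ss}(K^\circ)$, whose orbit average on $V_{ss}$ vanishes, so the pigeonhole step of Lemma~\ref{lm_shorten} produces a $V_{ss}$-translation of length $<r_{ss}^{-1}/2$ with $u_0$-component $\gtrsim r_{ss}^{-1}$. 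Membership in $B(C\e)$ survives throughout because $\rho_{r_{ss},r_a,r_o}$ is unitary and only boundedly many elements of $B(C\e)$ are ever composed, so the uncontrolled $V_a$- and $V_o$-coordinates of the auxiliary maps do no harm.

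The new difficulty, and the step I expect to be the main obstacle, is the $V_a$-factor: $\t_a(K^\circ)$ is merely a torus, Theorem~A does not apply, and there is in general no quantitative equidistribution of $\t_a(\mu)^{*(l)}$ on $\t_a(K)$ at the scale $\|\f\|_\Lip^{-1}$ needed to realise $V_a$-rotations with small translation within $l_1$ steps. The plan is to bypass quantitative equidistribution altogether. The action of $\t_a(K^\circ)$ on $V_a$ still has no nonzero fixed vectors, so $\int_{\t_a(K^\circ)}\langle\t v_a,u_0\rangle\,dm(\t)=0$ and the shortening scheme can be run on $V_a$ once one can realise the finitely many intermediate rotations along a bounded-length curve in $\t_a(K^\circ)$ as $\t_a$-parts of $B(C\e)$-elements of controlled translation. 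This is precisely where the dependence of $c$ on $R$ enters: for $r_a,r_o\le R$ one may afford a purely qualitative equidistribution step — there is $k=k(\mu)$ with $\t_a(\mu)^{*(k)}$ within a fixed distance $\delta_0(\mu)$ of Haar on $\t_a(K)$ — and handle the residual small torus rotations by a separate device (one-parameter subgroups of $\t_a(K^\circ)$ together with the same recentring trick), while noting that the $V_a$- and $V_o$-rotation parts picked up by $\g_1'$ lie in finite commutator subgroups (the groups $\t_a(K)$ and $\t_o(K)$ are virtually abelian) and can be neutralised by passing to a finite-index sub-walk. The delicate point is the book-keeping: keeping all constants dependent on $\mu$ and $R$ only (never on $\f$), and in particular covering the regime where $r_a$ is comparable to $R$ while $\|\f\|_\Lip$ is large, where the qualitative step must be married to the $\log^{-3}$ saving rather than the $r_a^2$ saving. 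Once the $V_a$-analogue of Lemma~\ref{lm_shorten} is available, the proof is assembled exactly as for Proposition~\ref{pr_high}.
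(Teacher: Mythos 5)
The central gap is exactly the step you flag as ``the main obstacle'', the abelian factor $V_a$: your proposed workaround does not work, and it is here that the paper's proof is genuinely different from Proposition \ref{pr_high}. Qualitative equidistribution of $\t_a(\mu)^{*(k)}$ to within a fixed distance $\delta_0(\mu)$ of Haar says nothing about producing, inside $B(C\e)$, rotations lying in the $\|\f\|_\Lip^{-1}$-neighbourhoods needed to run the shortening scheme of Lemma \ref{lm_shorten} on $V_a$ (that scheme needs, for each of the $\approx r_a|v|$ intermediate rotations along the curve, an element of $B(C\e)$ with that prescribed rotation part), and ``one-parameter subgroups of $\t_a(K^\circ)$'' are not elements of the random walk, so they cannot be certified to lie in $B(C\e)$ at all; the absence of a spectral gap on the torus is precisely why Theorem A is unavailable there. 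The paper bypasses this with two ingredients absent from your proposal: Lemma \ref{lm_comm}, which shows $\pi_a([G_1,G_1])$ is all of $V_a$, and Lemma \ref{lm_transA}, which builds the long $V_a$-translation as a commutator $[h_3\cdot(\g_3')^M,\,h_2\g_2']$ of the $M$-th power of a short element $\g_3'\in B(m\e/M)$ with an element $\g_2'$ whose $\t_a$-part has no nonzero fixed vector, via the identity $u_0=u_1-\t_a(\g_2')u_1$. This needs the additional hypothesis $\mu_1^{*(2)}(B(\e/M))>1-c$ at the finer scale $\e/M$ with $M\approx r_a^{-1}$, and it is exactly the failure of that hypothesis which yields $\|T\f\|_2\le 1-c\e^2/2M^2\approx 1-c r_a^2$; your sketch contains no mechanism producing the $r_a^2$ saving in the regime where concentration holds at scale $\e$ but not at scale $\e/M$. (The dependence on $R$ also enters there, through the finite set of admissible $u_0$, not through a qualitative-Haar step.)

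A second, independent error is the claim that the ``uncontrolled $V_a$- and $V_o$-coordinates of the auxiliary maps do no harm''. Membership in $B(C\e)$ indeed survives, but the final contradiction requires the constructed element to satisfy $\pi_a(\g_1')=1=\pi_o(\g_1')$ (full triviality, not merely trivial rotation part on $V_{ss}\oplus V_a$): if $\g_1'$ rotates or translates in the other factors, then $\rho(\g_1')\f$ is no longer approximately a fixed multiplier times $\f$ on the concentration region, and the extra phase $e(r_a\langle\xi_a,v_a\rangle+r_o\langle\xi_o,v_o\rangle)$ together with the rotation in $(\xi_a,\xi_o)$ can cancel the $\xi_{ss}$-phase, so no lower bound on $\|\rho(\g_1')\f-\f\|_2$ follows. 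This is why the paper replaces your plain symmetrization by Lemma \ref{lm_connected} (extracting a component $\mu_1$ of $(\wt\mu*\mu)^{*(L)}$ whose rotation parts lie in $K^\circ$, hence act trivially on $V_o$, with the transfer back through $\|(S^*S)^{L'}\f\|_2\le p\|T\f\|_2+q$), and why in Lemma \ref{lm_rota2} the auxiliary set is the double commutator $X=[[X_0,X_0],[X_0,X_0]]$, invoking Got\^o's theorem that every element of a connected semisimple compact group is a commutator, so that $\t_{ss}(X)=\pi_{ss}(K^\circ)$ while $\pi_a(X)=\{1\}=\pi_o(X)$; only then does Lemma \ref{lm_transSS} give an element genuinely trivial on $V_a\oplus V_o$, and only then does the Proposition \ref{pr_high} endgame apply verbatim. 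Your treatment of the $V_{ss}$ factor is otherwise in the paper's spirit, but without the connected-component reduction, the commutator cleaning, and the $M$-fold commutator dichotomy for $V_a$, the argument does not close.
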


Proposition \ref{pr_large} can be deduced from Proposition \ref{pr_highssa}
exactly the same way as the proof of Lemma \ref{lm_high}.
The rest of the section is devoted to the proof of  Proposition \ref{pr_highssa}.
We fix $r_{ss},r_a,r_o$ and write $\rho$ instead of
$\rho_{r_{ss},r_a,r_o}$  saving a considerable amount of ink.
The hypothesis of Proposition \ref{pr_highssa} on $\mu$ is assumed throughout
the section.

Our first goal is to
replace $\mu$ with a symmetric measure $\mu_1$
such that $\supp\t(\mu_1)\subset K^\circ$.

\begin{lem}
\label{lm_connected}
We can write $(\wt\mu*\mu)^{*(L)}=p\mu_1+q\mu_2$, with $1\ge p>0$,
where $\mu_1$ and $\mu_2$
are probability measures on $\Isom(\R^d)$ and $L\ge1$ is an integer
depending on $\mu$.
Furthermore, $\mu_1$ is almost non-degenerate, symmetric, has
finite moments of order $2$, and the closure of the
group generated by $\supp\t(\mu_1)$ is $K^{\circ}$.
\end{lem}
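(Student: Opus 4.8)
The plan is to obtain $\mu_1$ as a renormalised restriction of a high convolution power of $\nu:=\wt\mu*\mu$ to the part of $G$ lying over the identity component $K^\circ$ of the rotation group. First record the soft facts: $\nu$ is symmetric, has finite moments of order $2$, and (as noted in Section~\ref{sc_nota}) is almost non-degenerate; moreover $\mathrm{id}\in\supp\nu$, since $\g^{-1}\g=\mathrm{id}$ for $\g\in\supp\mu$, so $\langle\supp\nu\rangle$ is dense in $G$ and $\langle\t(\supp\nu)\rangle$ is dense in $K$. Put $G_0:=\t^{-1}(K^\circ)=\{\g\in G:\t(\g)\in K^\circ\}$; since $K^\circ$ is a clopen normal subgroup of finite index $m:=[K:K^\circ]$ in the compact Lie group $K$, the set $G_0$ is a clopen normal subgroup of index $m$ in $G$, and $\nu^{*(L)}(G_0)>0$ for every $L\ge1$ because $G_0$ is an open neighbourhood of $\mathrm{id}\in\supp\nu^{*(L)}$. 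For each $L$ let $\mu_1^{(L)}$ be $\nu^{*(L)}$ restricted to $G_0$ and renormalised, and $\mu_2^{(L)}$ its renormalised restriction to $G\setminus G_0$ (when $K$ is already connected, $G_0=G$, and one simply takes $L=1$, $p=q=\tfrac12$, $\mu_1=\mu_2=\nu$). As $G_0$ is stable under inversion and $\nu^{*(L)}$ is symmetric, $\mu_1^{(L)}$ is symmetric; finiteness of second moments passes to it, this property being preserved by convolution and by renormalised restriction. Thus $(\wt\mu*\mu)^{*(L)}=p\,\mu_1^{(L)}+q\,\mu_2^{(L)}$ with $p,q>0$, and it remains to choose $L$, depending only on $\mu$, so that $\mu_1:=\mu_1^{(L)}$ is almost non-degenerate and the rotation parts of its support generate a dense subgroup of $K^\circ$.

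For the rotation parts, note that the rotation parts of $\supp\mu_1^{(L)}$ contain $\Omega_L:=\t(\supp\nu)^{L}\cap K^\circ$ (an $L$-fold product set intersected with $K^\circ$); the $\Omega_L$ form an increasing family of subsets of $K^\circ$ (append $\mathrm{id}$) whose union $\langle\t(\supp\nu)\rangle\cap K^\circ$ is dense in $K^\circ$, being the intersection of a dense subgroup of $K$ with the open subgroup $K^\circ$. For any word $g$ in $\t(\supp\nu)$ one has $g^{m}\in K^\circ$, hence $g^{m}\in\Omega_{m|g|}$; since every element of the compact connected Lie group $K^\circ$ lies in a maximal torus, the $m$-th power map is onto $K^\circ$, so $\{g^{m}:g\in\langle\t(\supp\nu)\rangle\}\subseteq\bigcup_L\Omega_L$ is already dense in $K^\circ$. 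Thus $\bigcup_L\langle\Omega_L\rangle$ is dense in $K^\circ$; what is needed is a \emph{single} $L$ with $\overline{\langle\Omega_L\rangle}=K^\circ$, for which it suffices to exhibit finitely many elements of $\langle\t(\supp\nu)\rangle\cap K^\circ$ that topologically generate $K^\circ$, as these then lie in a common $\Omega_{L_0}$. When $\supp\mu$ is finite this is immediate ($\langle\t(\supp\nu)\rangle$ is finitely generated, and $\langle\t(\supp\nu)\rangle\cap K^\circ$ has finite index in it); the general case is the step I expect to be the main obstacle, and pushing it through should use the structure of $K^\circ$ as a compact connected Lie group — its topological finite generation, surjectivity of power maps, genericity of topologically generating tuples — so as to rule out a toral quotient of $K^\circ$ onto which $\langle\t(\supp\nu)\rangle\cap K^\circ$ maps to a $\mathbb Q/\mathbb Z$-type dense torsion subgroup.

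Finally, almost non-degeneracy of $\mu_1$ should follow as in the proof of Lemma~\ref{lm_almostnd}: $\langle\supp\nu\rangle\cap G_0$ has finite index in $\langle\supp\nu\rangle$, which acts on $\R^d$ with every orbit affinely spanning $\R^d$ (almost non-degeneracy of $\nu$), and this property persists for the finite-index subgroup; consequently the sets $U_L:=\{x\in\R^d:\{\g(x):\g\in\supp\nu^{*(L)}\cap G_0\}\ \text{affinely spans}\ \R^d\}$ are Zariski-open, increase with $L$, and exhaust $\R^d$, so $U_L=\R^d$ for $L$ large by the Noetherian property. Taking $L$ to be the largest of the finitely many thresholds arising above completes the proof, with $L$ depending only on $\mu$.
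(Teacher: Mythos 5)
Your overall construction coincides with the paper's: restrict $(\wt\mu*\mu)^{*(L)}$ to $G^\circ=\{\g\in G:\t(\g)\in K^\circ\}$ and renormalise; symmetry, positive mass and second moments are handled the same (trivial) way. The genuine gap is in your treatment of almost non-degeneracy. You assert that since every orbit of $\langle\supp\nu\rangle$ affinely spans $\R^d$, ``this property persists for the finite-index subgroup'' $\langle\supp\nu\rangle\cap G_0$, and your sets $U_L$ exhaust $\R^d$ only if that persistence holds. But this is not automatic: an orbit of the big group is a finite union of orbits of the finite-index subgroup, and a finite union of proper affine subspaces can perfectly well affinely span $\R^d$, so nothing in your argument rules out that, for some $x$, every set $\{\g(x):\g\in\supp\nu^{*(L)},\ \t(\g)\in K^\circ\}$ lies in a proper affine subspace. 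This is exactly where the paper does its real work: using the central limit theorem it produces, for a suitable $L_0$, a finite set $A\subset\{\g(x):\g\in\supp((\wt\mu*\mu)^{*(L_0)})\}$ approximating an $(o+1)\times\cdots\times(o+1)$ grid, $o=[K:K^\circ]$, so that no proper affine subspace contains more than $|A|/(o+1)$ of its points; pigeonholing over the $o$ cosets of $K^\circ$ then yields a coset $\t_1K^\circ$ whose images of $x$ already span, and multiplying on the left by $\g_1^{-1}$ with $\t(\g_1)\in\t_1K^\circ$ puts these images into the required form at time $L=2L_0$. Only after this per-point statement does the Noetherian/Zariski-open argument (which you do have) make $L$ uniform in $x$. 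Without some substitute for this pigeonhole step your proof of almost non-degeneracy does not go through.

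On the other issue you single out as ``the main obstacle'' --- finding one $L$ for which $\t(\supp\nu)^L\cap K^\circ$ topologically generates $K^\circ$ --- your diagnosis that an increasing union of closed subgroups with dense union need not stabilise is fair, but you leave the general case unproved, so that part of your proposal is also incomplete by your own admission. Note, though, that this is not where the paper invests its effort: its proof declares almost non-degeneracy to be the only non-trivial property and gives no argument for the density claim. So your energy is concentrated on a point the paper passes over, while the step the paper actually proves (and which genuinely needs an idea, namely the grid-and-pigeonhole argument) is the one your write-up glosses over with the unjustified finite-index persistence claim.
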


\begin{proof}
We fix an integer $L$ and write
\[
G^\circ=\{\g\in G:\t(\g)\in K^\circ\},
\]
let $p=(\wt\mu*\mu)^{*(L)}(G^\circ)$
and let $\mu_1$ be  $1/p$ times the restriction of $(\wt\mu*\mu)^{*(L)}$
to $G^\circ$.
The only non-trivial property to check is that almost non-degeneracy
holds if $L$ is sufficiently large.
It is enough to check for an arbitrary  point $x$ the condition that the points $\g(x)$
for $\g\in\supp \mu_1 $ do not lie in a proper affine subspace,
if $L$ is sufficiently large possibly depending on $x$.
Then the claim follows from the same Noetherian property argument as in
Lemma \ref{lm_almostnd}.

Denote by $o$ the order of $K/K^\circ$.
Using the Central Limit Theorem for the measure $\wt\mu*\mu$, we can find
an integer $L_0$, and a finite set
\[
A\subset \{\g(x):\g\in\supp(\wt\mu*\mu)^{*(L_0)}\}
\]
which approximates an $(o+1)\times\cdots\times(o+1)$ grid.
The approximation can be arbitrarily good if $L_0$ is
sufficiently large.
All that we need is that
a proper affine subspace intersects $A$ in at most $|A|/(o+1)$ points.

Then by the pigeon hole principle, there is $\t_1\in K$ such that
\[
B:=\{\g(x):\g\in\supp (\wt\mu*\mu)^{*(L_0)},\t(\g)\in\t_1 K^{\circ}\}
\]
is not contained in a proper affine subspace.
Now the claim follows for $L=2L_0$:
Indeed, take any $\g_1\in\supp(\wt\mu*\mu)^{*(L_0)}$ with
$\t(\g_1)\in\t_1 K^{\circ}$ and observe that
$\g_1^{-1}(B)$ is in the
set of images of $x$ under elements of $\supp(\mu_1)$.
\end{proof}

For the rest of the proof we work with $\mu_1$ and assume that
it satisfies the properties claimed in Lemma \ref{lm_connected}.
Moreover, we assume that $\mu_1$ has property $(C)$ which is justified
by Lemma \ref{lm_O} after changing the origin.
Then we also need to multiply the function $\f$ appearing in Proposition \ref{pr_highssa}
with a character, possibly increasing its Lipschitz norm by a factor depending on $\mu$ and $R$.
(Compare with the discussion on page \pageref{pg_high} in Section \ref{sc_high}.)
We set out to prove  an inequality analogous to the one claimed in
Proposition \ref{pr_highssa} for the operator $\rho(\mu_1)$.

We fix  $\f\in C^1(S^{ss}\times S^{a}\times S^{o})$.
As in Section \ref{sc_low}, the heart of the proof is the
study of the set
\[
B(\e):=\{\g\in\Isom(\R^d):\|\rho(\g)\f-\f\|_2<\e\}.
\]
The next two Lemmata is obtained by a simple variation on the arguments
in Section \ref{sc_low}.

\begin{lem}
\label{lm_rota2}
Let $\e>0$ and
let $l_1=C_1(r_{ss}^{-2}+\log^{3}(\|\f\|_\Lip+2))$,
where $C_1$ is a suitably large
constant depending on $\mu$ and $\e$.
Suppose that $\mu_1^{*(l_1)}(B(\e))>9/10$.
Then there is a set $X\subset B(64\e)$ such that
\[
\t_{ss}(X)=\pi_{ss}(K^\circ)
\quad{\rm and}\quad\pi_a(X)=\{1\}=\pi_o(X).
\]
\end{lem}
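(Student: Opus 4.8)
The plan is to imitate the proof of Lemma~\ref{lm_rota}, but to run the mixing argument inside the compact semi-simple group $\pi_{ss}(K^\circ)$ (where Theorem~A is available), and then to pass from rotations to genuine isometries by an iterated commutator, which is what forces the $V_a$- and $V_o$-components to become trivial.

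\emph{Mixing step.} Since $\pi_{ss}(K^\circ)$ is compact with semi-simple connected component, and $\t_{ss}(\mu_1)$ is a symmetric probability measure on it whose support generates a dense subgroup (both by Lemma~\ref{lm_connected}), Theorem~A applies to the random walk $\t_{ss}(\mu_1)^{*(l_1)}=\t_{ss}(\mu_1^{*(l_1)})$ on $\pi_{ss}(K^\circ)$. As in the proof of Lemma~\ref{lm_rota}, fix a ball $\BB\subset K^\circ$ around the identity small enough that each $\b\in\BB$ moves points of $S^{ss}\times S^a\times S^o$ by at most $\e\|\f\|_\Lip^{-1}$; since $v(\b)=0$ this gives $\BB\subset B(\e)$. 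Take an approximate identity $\psi$ on $\pi_{ss}(K^\circ)$ with $\supp\psi\subset\pi_{ss}(\BB)$, $\int\psi\,dm_{\pi_{ss}(K^\circ)}=1$ and $\|\psi\|_\Lip\le C\|\f\|_\Lip^{1+\dim\pi_{ss}(K^\circ)}$; this is possible because $\pi_{ss}(\BB)$ contains a ball of radius $\gea\|\f\|_\Lip^{-1}$. Applying Theorem~A $l_1$ times, and using $l_1>C_1\log^3(\|\f\|_\Lip+2)$ with $C_1$ large, one obtains $\|1-\int\psi(\t^{-1}\s)\,d\t_{ss}(\mu_1)^{*(l_1)}(\t)\|_2\le1/10$. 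With $Y=B(\e)$ (so $\mu_1^{*(l_1)}(Y)>9/10$) and $\nu$ the normalised restriction of $\t_{ss}(\mu_1^{*(l_1)})$ to $Y$, one deduces $\|\int\psi(\t^{-1}\s)\,d\nu(\t)\|_2<\sqrt2$, hence $m_{\pi_{ss}(K^\circ)}(\t_{ss}(Y)\pi_{ss}(\BB))>1/2$ by Cauchy--Schwarz. A product of two sets of Haar measure $>1/2$ in a compact group is the whole group, so $\wt X:=Y\BB Y\BB\subset\Isom(\R^d)$ satisfies $\t_{ss}(\wt X)=\pi_{ss}(K^\circ)$, while $\wt X\subset B(4\e)$ since $Y,\BB\subset B(\e)$.

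\emph{Double-commutator step.} As $\pi_{ss}(K^\circ)$ is compact, connected and semi-simple, every element of it is a commutator \cite{Got}; applying this twice, every $\s_0\in\pi_{ss}(K^\circ)$ can be written $\s_0=[[\a,\b],[\g,\d]]$ with $\a,\b,\g,\d\in\pi_{ss}(K^\circ)$. Pick $\hat\a,\hat\b,\hat\g,\hat\d\in\wt X$ with $\t_{ss}(\hat\a)=\a$, $\t_{ss}(\hat\b)=\b$, $\t_{ss}(\hat\g)=\g$, $\t_{ss}(\hat\d)=\d$, and set $\g_{\s_0}:=[[\hat\a,\hat\b],[\hat\g,\hat\d]]$. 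These isometries have rotation part in $K^\circ$, on which $\t_{ss}$ is multiplicative, so $\t_{ss}(\g_{\s_0})=\s_0$. Since the $K^\circ$-action on $V_a$ factors through an abelian group and the $K^\circ$-action on $V_o$ is trivial, we have $\t_a([\hat\a,\hat\b])=\t_o([\hat\a,\hat\b])=1$ and likewise for $[\hat\g,\hat\d]$; hence $\pi_a([\hat\a,\hat\b])$ and $\pi_a([\hat\g,\hat\d])$ are pure translations of $V_a$, and $\pi_o([\hat\a,\hat\b])=\pi_o([\hat\g,\hat\d])=1$. Pure translations commute, so $\pi_a(\g_{\s_0})=1$ and $\pi_o(\g_{\s_0})=1$. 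Finally, by unitarity of $\rho$ and the triangle inequality a commutator of two elements of $B(\eta)$ lies in $B(4\eta)$, so $[\hat\a,\hat\b],[\hat\g,\hat\d]\in B(16\e)$ and $\g_{\s_0}\in B(64\e)$. Then $X:=\{\g_{\s_0}:\s_0\in\pi_{ss}(K^\circ)\}$ has all the asserted properties.

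The conceptual heart, and the only new point over Lemma~\ref{lm_rota}, is the double-commutator device: that $\pi_{ss}(K^\circ)$ is perfect keeps the full group reachable after two commutators, while the $V_a$-action factoring through an abelian group and the $V_o$-action being trivial force the iterated commutator to act trivially on $V_a\oplus V_o$. The remaining ingredients — the geometry of $\BB$, the covering estimate, and the bookkeeping turning $4\e$ into $64\e$ — are routine adaptations of Section~\ref{sc_high}.
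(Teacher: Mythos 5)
Your argument is correct and is essentially the paper's own proof: the paper likewise first produces, via the mixing argument of Lemma \ref{lm_rota}, a set $X_0\subset B(4\e)$ with $\pi_{ss}(\t(X_0))=\pi_{ss}(K^\circ)$, and then takes $X=[[X_0,X_0],[X_0,X_0]]$, invoking Got\^o's theorem (twice) to keep full coverage of $\pi_{ss}(K^\circ)$ while the abelian action of $K^\circ$ on $V_a$ and the trivial action on $V_o$ kill those components, with the same $4\e\to 16\e\to 64\e$ bookkeeping. The one point to tighten is that you should take $Y=B(\e)\cap\supp(\mu_1^{*(l_1)})$, so that $\wt X\subset G^\circ$ and your assertion that the chosen lifts have rotation part in $K^\circ$ is actually justified; this restriction is implicit in the paper's argument as well and changes nothing else.
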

\begin{proof}
Following the proof of Lemma \ref{lm_rota}, it is easy to find a subset
$X_0\subset B(4\e)$ such that $\pi_{ss}(\t(X_0))=\pi_{ss}(K^\circ)$.
Consider $X_1=[X_0,X_0]$ and $X=[X_1,X_1]$.
Clearly $\pi_o(X_1)=\{1\}$, and $\pi_a(X_1)$ consists
of translations.
Therefore $\pi_a(X)=\{1\}=\pi_o(X)$, and $X\subset B(64\e)$
follows from the triangle inequality.
Since every element is a commutator
in a connected semi-simple compact Lie group (see \cite{Got-commutator}),
we have $\pi_{ss}(X)=\pi_{ss}(K^\circ)$ which finishes the proof.
\end{proof}

\begin{lem}
\label{lm_transSS}
Let $\e>0$ be arbitrary and $l_1$ and $C_1$ be as in the previous lemma.
Let $l_2=C_2(r_{ss}^{-2}+\log^{3}(\|\f\|_\Lip+2))$,
where $C_2$ is a suitably large
constant depending on $\mu_1$ and $C_1$.
Suppose that $\mu_1^{*(l_i)}(B(\e))>9/10$, for $i=1,2$.
Then there is a constant $c>0$ depending on $\mu_1$ such that the
following hold.
For any unit vector $u_0\in S^{ss}$, there is an element
$\g_1'\in B(386\e)$
such that
\begin{align*}\t_{ss}(\g_1')=1,\;
|v_{ss}(\g_1')|<r_{ss}^{-1}/2&, \;\langle
v_{ss}(\g_1'),u_0\rangle > cr_{ss}^{-1},
\quad{\rm and}\\
\pi_a(\g_1')=& 1=\pi_0(\g_1').
\end{align*}
\end{lem}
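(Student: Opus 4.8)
The plan is to follow the proofs of Lemmata \ref{lm_translation} and \ref{lm_shorten} of Section \ref{sc_high}, now working with the decomposition $\R^d=V_{ss}\oplus V_a\oplus V_o$ and using Lemma \ref{lm_rota2} --- which supplies rotations of the $V_{ss}$-coordinate that act trivially on $V_a\oplus V_o$ --- in the role played there by Lemma \ref{lm_rota}. Throughout I would use that $B(\e)B(\e')\subset B(\e+\e')$ and $B(\e)^{-1}=B(\e)$, which hold because $\rho$ is unitary.

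\emph{Step 1: a long translation of $V_{ss}$.} I would apply the central limit theorem (Theorem \ref{th_central}) to $\mu_1$; since $\mu_1$ is almost non-degenerate and satisfies $(C)$, the law of $v(\g)/\sqrt{l_2}$ under $\mu_1^{*(l_2)}$ converges to a centered Gaussian, which is non-degenerate because the form $\D$ of Proposition \ref{pr_low} is positive definite. Its projection to $V_{ss}$ is then non-degenerate, so $\langle v_{ss}(\g),u_0\rangle/\sqrt{l_2}$ converges to a non-degenerate centered one-dimensional Gaussian, uniformly in $u_0$ by compactness of $S^{ss}$. Combined with Lemma \ref{lm_uplength} and the hypothesis $\mu_1^{*(l_2)}(B(\e))>9/10$, this produces $\g_2\in B(\e)$ with $|v(\g_2)|<C_0\sqrt{l_2}$ and $|\langle v_{ss}(\g_2),u_0\rangle|>c_0\sqrt{l_2}$ (as in Lemma \ref{lm_shorten} I keep the absolute value and fix the sign at the very end). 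Since $\t(\g_2)\in K^\circ$, Lemma \ref{lm_rota2} gives $h_1\in B(64\e)$ with $\t_{ss}(h_1)=\t_{ss}(\g_2)^{-1}$, $\pi_a(h_1)=1=\pi_o(h_1)$ and $|v_{ss}(h_1)|<C\sqrt{l_1}$ (the translation bound following from the construction as in Lemma \ref{lm_rota}). I would set $\g_1:=\g_2 h_1\in B(65\e)$; then $\t_{ss}(\g_1)=1$, and once $l_2/l_1=C_2/C_1$ is taken large enough we get $|v_{ss}(\g_1)|<C_0'\sqrt{l_2}$ and $|\langle v_{ss}(\g_1),u_0\rangle|>(c_0/2)\sqrt{l_2}$. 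The parts $\pi_a(\g_1)$, $\pi_o(\g_1)$ need not be trivial; this is taken care of for free in Step 2.

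\emph{Step 2: shortening.} Assume $\langle v_{ss}(\g_1),u_0\rangle>0$ and put $v=v_{ss}(\g_1)$. Since $K^\circ$ fixes no non-zero vector of $V_{ss}$, the average of $\langle\s v,u_0\rangle$ over the Haar measure of $\pi_{ss}(K^\circ)$ is $0$, so there is $\t_1\in\pi_{ss}(K^\circ)$ with $\langle\t_1 v,u_0\rangle\le0$. Choosing a path from $1$ to $\t_1$ in $\pi_{ss}(K^\circ)$ along which $\s\mapsto\s v$ has length $<C|v|$ and discretizing it, I obtain $\s_0=1,\s_1,\dots,\s_N=\t_1$ with $N\le 2Cr_{ss}|v|+1$ and $|\s_i v-\s_{i-1}v|<r_{ss}^{-1}/2$; by the pigeonhole principle some $i$ satisfies $\langle\s_{i-1}v-\s_i v,u_0\rangle\ge\langle v,u_0\rangle/N\ge cr_{ss}^{-1}$. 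Using Lemma \ref{lm_rota2}, I would pick $g_{i-1},g_i\in B(64\e)$ with $\t_{ss}(g_j)=\s_j$ and $\pi_a(g_j)=1=\pi_o(g_j)$, and set
\[
\g_1':=g_{i-1}\,\g_1\,g_{i-1}^{-1}\,g_i\,\g_1^{-1}\,g_i^{-1}\in B(386\e).
\]
Since $g_{i-1},g_i$ restrict to the identity on $V_a$ and on $V_o$, conjugation by them is trivial there, so $\pi_a(\g_1')=\pi_a(\g_1)\pi_a(\g_1)^{-1}=1$ and likewise $\pi_o(\g_1')=1$. On $V_{ss}$, since $\t_{ss}(\g_1)=1$ the map $\pi_{ss}(\g_1)$ is the pure translation $(v,1)$, and from $(w,\s)(v,1)(w,\s)^{-1}=(\s v,1)$ one reads off that $g_{i-1}\g_1 g_{i-1}^{-1}$ and $g_i\g_1^{-1}g_i^{-1}$ restrict on $V_{ss}$ to the translations $\s_{i-1}v$ and $-\s_i v$; hence $\pi_{ss}(\g_1')=(\s_{i-1}v-\s_i v,\,1)$. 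Thus $\t_{ss}(\g_1')=1$, $|v_{ss}(\g_1')|<r_{ss}^{-1}/2$ and $\langle v_{ss}(\g_1'),u_0\rangle>cr_{ss}^{-1}$, as required; the case $\langle v,u_0\rangle<0$ is symmetric (interchange $i-1$ and $i$, or replace $\g_1'$ by its inverse).

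\emph{Expected main difficulty.} The real work is the bookkeeping across the three summands. The conjugating elements must be drawn from Lemma \ref{lm_rota2}, not taken as arbitrary lifts, so that their $V_a$- and $V_o$-components are trivial: this is precisely what makes the (possibly non-trivial) $\pi_a(\g_1)$, $\pi_o(\g_1)$ cancel between the two halves of $\g_1'$ while the $V_{ss}$-translations add. One also has to arrange $\t_{ss}(\g_1)=1$ before shortening, so that $\pi_{ss}(\g_1)$ is a genuine translation and the conjugation identity applies. The only analytic input --- non-degeneracy of the Gaussian projected to $V_{ss}$ --- is immediate from Proposition \ref{pr_low}; everything else is the semidirect-product computation together with the path-and-pigeonhole argument already carried out in Lemma \ref{lm_shorten}.
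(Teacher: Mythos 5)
Your proposal is correct and follows essentially the same route as the paper, which likewise constructs $\g_1'=g_{i-1}\g_1 g_{i-1}^{-1}g_i\g_1^{-1}g_i^{-1}$ by rerunning the arguments of Lemmata \ref{lm_lowlength}--\ref{lm_shorten} on the $V_{ss}$-projection with the conjugators drawn from the set $X$ of Lemma \ref{lm_rota2}, so that the $V_a$- and $V_o$-components cancel as $\pi_a(\g_1)\pi_a(\g_1^{-1})=1$; your error bookkeeping ($6\times 64\e+2\e=386\e$) matches the paper's. The details you spell out (translation bound on $X$, non-degeneracy of the projected Gaussian via Proposition \ref{pr_low}) are exactly the points the paper leaves implicit.
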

\begin{proof}
Consider the projection to $V_{ss}$ and repeat the argument in Lemmata
\ref{lm_lowlength}--\ref{lm_shorten}, except that instead
of the set $X$ constructed in Lemma \ref{lm_rota}, use the one constructed
in Lemma \ref{lm_rota2}.
We need to use six elements of $X$, and since now they are in $B(64\e)$
instead of $B(4\e)$, the resulting element $\g_1'$ will be in $B(386\e)$.
Recall from the proof of Lemma \ref{lm_shorten}
that $\g_1'$ is of the form $g_1\g_1g_1^{-1}g_2\g_1^{-1}g_2^{-1}$,
where $g_1,g_2\in X$.
Since $\pi_a(g_1)=\pi_a(g_2)=1$, we have
$\pi_a(\g_1')=\pi_a(\g_1)\pi_a(\g_1^{-1})=1$, and a similar
calculation applies to the projection to $V_o$.
This finishes the proof.
\end{proof}

In the above lemma we constructed a translation in $V_{ss}$.
The next goal will be to construct a translation in $V_{a}$.
This is done in the next two lemmata
by adapting the method of Guivarc'h \cite{Gui-uniform}.
Denote by $G_1$ the closure of the group generated by $\supp(\mu_1)$.

\begin{lem}
\label{lm_comm}
$\pi_a([G_1,G_1])$ is the additive group of the vector space $V_a$.
\end{lem}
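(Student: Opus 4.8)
Set $T_a:=\pi_a(K^\circ)$, a torus acting orthogonally on $V_a$ without nonzero fixed vectors (by the choice of the decomposition $\R^d=V_{ss}\oplus V_a\oplus V_o$), and recall $\t_a(G_1)\subseteq T_a$. The first step is the commutator computation: using the semidirect‑product law and the fact that $T_a$ is abelian, one gets for $\g,\g'\in G_1$
\[
\pi_a([\g,\g'])=\bigl((\t_a(\g)-1)v_a(\g')-(\t_a(\g')-1)v_a(\g),\,1\bigr)\in V_a .
\]
Hence $H:=\overline{\pi_a([G_1,G_1])}$ is a closed subgroup of $V_a$ (not just of $\Isom(V_a)$). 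Moreover, since $[G_1,G_1]\lhd G_1$, the group $\pi_a([G_1,G_1])$ is preserved by conjugation by every $\pi_a(\g)$, $\g\in G_1$, which on translations is the map $v\mapsto\t_a(\g)v$; as $\t_a(\g)$ runs over the dense subgroup of $T_a$ generated by $\t_a(\supp\mu_1)$ (Lemma \ref{lm_connected}) and $H$ is closed, $H$ is $T_a$-invariant. It therefore suffices to prove $H=V_a$.

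Next I would record that any closed $T_a$-invariant subgroup $H$ of $V_a$ is a linear subspace: its maximal linear subspace $V_H$ is intrinsic, hence $T_a$-invariant, and $T_a$ acts on the discrete group $H/V_H$ through a finite quotient of its automorphism group, hence trivially since $T_a$ is connected; but $H/V_H$ sits $T_a$-equivariantly as a lattice in a subquotient of $V_a$, which has no nonzero $T_a$-fixed vector, so $H/V_H=0$. Thus $H$ is a $T_a$-invariant subspace of $V_a$.

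Now suppose, for contradiction, that $H\subsetneq V_a$, and pass to $W:=V_a/H$, a nonzero Euclidean space on which $T_a$ acts orthogonally without nonzero fixed vectors. Writing $\bar v_a(\g):=v_a(\g)+H\in W$, the image of $\pi_a(G_1)$ in $W\rtimes T_a$ is abelian, because every $\pi_a([\g,\g'])$ maps into $H$; explicitly $(\t_a(\g)-1)\bar v_a(\g')=(\t_a(\g')-1)\bar v_a(\g)$ for all $\g,\g'\in G_1$. Since the characters of $T_a$ occurring in $W$ are all nontrivial, the set of $\tau\in T_a$ for which $\tau-1$ fails to be invertible on $W$ is a finite union of proper closed subgroups, hence of measure zero; as $\t_a(G_1)$ is dense in $T_a$, I may pick $\g_0\in G_1$ with $\tau_0:=\t_a(\g_0)$ avoiding this set, so $\tau_0-1$ is invertible on $W$. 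Commuting with $\g_0$ gives, for every $\g\in G_1$,
\[
\bar v_a(\g)=(\tau_0-1)^{-1}(\t_a(\g)-1)\,w_0,\qquad w_0:=\bar v_a(\g_0).
\]
Integrating against $\mu_1$ and invoking $(C)$ (so that $\int v_a(\g)\,d\mu_1(\g)=0$ in $V_a$, hence $0$ in $W$) yields $Mw_0=w_0$ with $M:=\int\t_a(\g)|_W\,d\mu_1(\g)$; since $M$ is an average of orthogonal maps each preserving $|w_0|$, the triangle inequality is an equality and $\t_a(\g)w_0=w_0$ for $\mu_1$-a.e.\ $\g$, so $w_0$ is fixed by the dense subgroup of $T_a$ generated by $\t_a(\supp\mu_1)$, hence by $T_a$, hence $w_0=0$. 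Plugging back, $\bar v_a(\g)=0$, i.e.\ $v_a(\g)\in H$, for every $\g\in G_1$, in particular for every $\g\in\supp\mu_1$. This contradicts almost non-degeneracy of $\mu_1$ at the origin: $\{v(\g):\g\in\supp\mu_1\}$ spans $\R^d$ affinely, so $\{v_a(\g):\g\in\supp\mu_1\}$ spans $V_a$ affinely and cannot lie in the proper subspace $H$. Therefore $H=V_a$.

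\textbf{Main obstacle.} The substantive step is the last paragraph: realizing that modding out by $H$ forces $\pi_a(G_1)$ to be abelian, and that abelianness, once paired with the centering hypothesis $(C)$ and almost non-degeneracy, is self‑contradictory; the two technical points making this work are the structural remark on $T_a$-invariant closed subgroups and the choice of $\tau_0$ with $\tau_0-1$ invertible on $W$. The remaining step — upgrading ``$\overline{\pi_a([G_1,G_1])}=V_a$'' to equality with $V_a$ as written — is minor: one either repeats the argument for the identity component $G_1^\circ$ (so that $\pi_a([G_1^\circ,G_1^\circ])$ is a connected, hence linear, subgroup of $V_a$), or simply notes that density in $V_a$ is all the subsequent construction of translations in $V_a$ actually requires; I would add one sentence to that effect.
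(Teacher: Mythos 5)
Your core argument is correct, and its decisive step is genuinely different from the paper's. Both proofs begin the same way (a rotation-invariant subgroup of $V_a$ must be a linear subspace, so one only has to rule out a proper invariant subspace $H$), but the contradictions diverge: the paper picks a two-dimensional $\pi_a(K^\circ)$-invariant plane $W\subset V_a$ orthogonal to $H$, pushes $G_1$ into $\Isom(W)$, notes that the image is commutative yet has non-trivial rotation part, hence consists of rotations about a single point $x\in W$, and contradicts almost non-degeneracy at a point lying over $x$ --- using neither $(C)$ nor any averaging. You instead quotient by $H$, exploit abelianness of the image of $G_1$ in $(V_a/H)\rtimes\pi_a(K^\circ)$, choose $\g_0$ with $\t_a(\g_0)-1$ invertible on the quotient (legitimate, since the characters occurring are non-trivial and $\t_a(G_1)$ is dense), and then integrate against $\mu_1$, using the centering of $\mu_1$ and strict convexity to force $v_a(\g)\in H$ for all $\g\in\supp\mu_1$, contradicting almost non-degeneracy at the origin. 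This is a valid alternative route; note that it consumes the standing normalization that $\mu_1$ is recentered (only the $V_a$-component of $(C)$ is actually available for $\mu_1$, which is all you use), whereas the paper's argument needs neither $(C)$ nor the density of $\t_a(G_1)$ and tests almost non-degeneracy at a point other than the origin.

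However, what you actually prove is only that $\pi_a([G_1,G_1])$ is \emph{dense} in $V_a$, while the lemma asserts equality, and neither of your proposed repairs closes this. The paper obtains equality because it never passes to the closure: it argues about $H=\pi_a([G_1,G_1])$ itself, whose connected components are cosets of its (linear) identity component, so the closest-point/fixed-vector trick and the dichotomy ``proper subspace or all of $V_a$'' apply to $H$ directly rather than to $\overline{H}$. Your first repair (rerun the argument for $G_1^\circ$) does not go through as stated: path-connectedness does make $\pi_a([G_1^\circ,G_1^\circ])$ a subspace, but your contradiction then needs the image of $G_1^\circ$ (not of $G_1$) to be abelian modulo that subspace, an element of $G_1^\circ$ with $\t_a-1$ invertible, and $\supp\mu_1$ inside $G_1^\circ$ --- none of which is guaranteed: $\supp\mu_1$ need not meet $G_1^\circ$, and $\t_a(G_1^\circ)$ need not be dense in $\pi_a(K^\circ)$, so the argument could say nothing at all if $G_1^\circ$ is small. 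Your second repair concedes the issue: observing that density is what the application in Lemma \ref{lm_transA} really uses changes the statement rather than proving it. So, as a proof of the lemma as written, the final upgrade from density to equality is a genuine gap, even though the density argument itself is sound.
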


Here $[G_1,G_1]$ denotes the derived subgroup of $G_1$, not
just the set of commutators.

\begin{proof}
Clearly $H:=\pi_a([G_1,G_1])$ is a subgroup of the
additive group of $V_a$, and it is invariant under the action of
$\pi_a(K^\circ)$.
Since $\pi_a(K^\circ)$ is connected, every connected component of $H$ is
invariant under $\pi_a(K^\circ)$.
Every such component is an affine subspace of $V_a$.
The point of such an affine subspace which is closest to the origin
is a fixed point of $\pi_a(K^\circ)$.
By the definition of $V_a$, the only fixed point is the origin.
Therefore it follows that $H$ is a linear subspace of $V_a$ invariant
under the action of $\pi_a(K^\circ)$.

Assume to the contrary that $H$ is a proper subspace of $V_a$.
Let $W$ be a two dimensional subspace of $V_a$ which is invariant under
$\pi_a(K)$ and orthogonal to $H$.
By projecting the translation part to $W$, $G_1$
naturally embeds to $\Isom(W)$; denote by $G_W$ the image.
Then $G_W$ is commutative (since it has a trivial commutator)
but has a non-trivial rotation part (since $\pi_a(K^\circ)$ acts on $W$
non-trivially),
hence it consists of rotations around the same point $x\in W$.
This means that $\mu_1$ almost every image of $x$ is orthogonal to $W$,
a contradiction to almost non-degeneracy.
\end{proof}

\begin{lem}
\label{lm_transA}
Let $\e$, $l_1$ and $C_1$ be as in Lemma \ref{lm_rota2}.
Suppose that $\mu_1^{*(l_1)}(B(\e))>9/10$.
Then for every $u_0\in V_a$, there are $c>0$, $v\in V_a$ with
$|v-u_0|<|u_0|/10$ and
an integer $L$ such that the following holds.
Let $M$ be an arbitrary positive integer and
assume that $\mu_1^{*(2)}(B(\e/M))>1-c$.
Then there is $\g_1'\in B(L\e)$ such that
\[
v_a(\g_1')=Mv, \quad \t_a(\g_1')=1
\quad{\rm and}\quad\pi_{ss}(\g_1')=1=\pi_o(\g_1').
\]
The vector $v$ may depend on $\f$, but $c$ and $L$ depend only on
$\mu$ and $u_0$.
\end{lem}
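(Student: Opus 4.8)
The plan is to realize the long translation $Mv$ inside $V_a$ as the $M$th power of a short product of commutators of elements of $\supp(\mu_1^{*(2k)})$, arranging the commutator word so that it already acts trivially on $V_{ss}$ and on $V_o$ while moving $\f$ under $\rho$ by only $O(\e/M)$; raising to the $M$th power then keeps us inside $B(O(\e))$. The $M$ in the displacement bound $B(\e/M)$ is precisely what compensates for the $M$ copies.

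First I would establish a structural fact about $H:=[G_1,G_1]$. Since $\pi_o(G_1)$ consists of translations, $\pi_o$ is trivial on $H$. Since $\pi_{ss}(K^\circ)$ is semi-simple with no nonzero fixed vector in $V_{ss}$, the connected group $\bar H:=\pi_{ss}(H)=[\pi_{ss}(G_1),\pi_{ss}(G_1)]$ is perfect (its Lie algebra $\fh$ satisfies $[\fh,\fh]=\fh$). Writing $Z=\ker(\pi_{ss}|_H)$ and $Z'=\ker(\pi_a|_H)$, the isomorphism $H/Z\cong\bar H$ together with perfectness of $\bar H$ gives $H=[H,H]\,Z$, while $\pi_a(H)=V_a$ being abelian forces $[H,H]\subseteq\ker\pi_a$, hence $[H,H]\subseteq Z'$; thus $H=ZZ'$, and combined with Lemma \ref{lm_comm} ($\pi_a(H)=V_a$) this yields $\pi_a(Z)=V_a$. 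So I may fix $\omega\in Z$ (hence $\pi_{ss}(\omega)=1=\pi_o(\omega)$) with $v_0:=\pi_a(\omega)$ obeying $|v_0-u_0|<|u_0|/100$, and write $\omega=[a_1,b_1]\cdots[a_N,b_N]$ with $a_i,b_i\in G_1$ and $N$ bounded in terms of $\mu$; using genericity of submersive points for commutator-product maps, I also arrange that $(g_i,h_i)_i\mapsto\bigl(\prod_i[\pi_{ss}(g_i),\pi_{ss}(h_i)],\ \sum_i v_a([g_i,h_i])\bigr)\in\bar H\times V_a$ is submersive at $(a_i,b_i)_i$.

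Next I would pass from $G_1$ to the support. Choose $k=k(\mu,u_0)$ large enough that $\supp(\mu_1^{*(2k)})$ carries positive $\mu_1^{*(2k)}$-mass in every small ball about each $a_i,b_i$, and small enough that an admissible perturbation there preserves $|v_0-u_0|<|u_0|/100$. From $\mu_1^{*(2)}(B(\e/M))>1-c$ and $B(\d_1)B(\d_2)\subseteq B(\d_1+\d_2)$ we get $\mu_1^{*(4k)}(B(2k\e/M))\ge(1-c)^{2k}\ge1-2kc$, so if $c$ is small in terms of $\mu$ and $u_0$ the tuples $(g_i,h_i)_{i\le N}\in(\supp\mu_1^{*(2k)})^{2N}$ with $g_i,h_i$ close to $a_i,b_i$ and $g_ih_i,\,h_ig_i\in B(2k\e/M)$ — whence $[g_i,h_i]=(g_ih_i)(h_ig_i)^{-1}\in B(4k\e/M)$ — form a set of positive measure. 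Among these, submersivity together with $\pi_a(Z)=V_a$ lets me select one with $\prod_{i}[\pi_{ss}(g_i),\pi_{ss}(h_i)]=1$ exactly and $v:=\sum_i v_a([g_i,h_i])$ satisfying $|v-u_0|<|u_0|/10$; here $\t_a([g_i,h_i])=1$ and $\pi_o([g_i,h_i])=1$ automatically, since $\pi_a(K^\circ)$ is abelian and $\pi_o(G_1)$ consists of translations. Putting $\g_1'=\bigl([g_1,h_1]\cdots[g_N,h_N]\bigr)^M$, one reads off $\pi_o(\g_1')=1$, $\pi_{ss}(\g_1')=1$, $\t_a(\g_1')=1$ and $v_a(\g_1')=Mv$, while $[g_1,h_1]\cdots[g_N,h_N]\in B(4Nk\e/M)$ gives $\g_1'\in B(4Nk\e)$; thus $L=4Nk$ works and depends only on $\mu$ and $u_0$, whereas $v$ depends on the chosen $g_i,h_i$, hence on $\f$.

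The delicate point is the exact matching in the last step: one must land $(g_i,h_i)_i$ in the thin ``good'' set — where $\rho(g_ih_i)$ and $\rho(h_ig_i)$ barely move $\f$ — while simultaneously forcing the $V_{ss}$-action of the commutator word to be \emph{exactly} the identity. This is where perfectness of $\bar H$, the bounded commutator width and submersivity of commutator maps on compact semi-simple groups, and the identity $H=ZZ'$ all enter; if an auxiliary correction of the $V_{ss}$-rotation is convenient, the set $X$ from Lemma \ref{lm_rota2} supplies one.
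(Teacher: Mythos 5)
Your plan founders at exactly the point you flag as ``delicate'', and the tools you list do not repair it. You need the word $w=[g_1,h_1]\cdots[g_N,h_N]$ to satisfy $\pi_{ss}(w)=1$ \emph{exactly} (rotation \emph{and} translation part on $V_{ss}$), while every $g_i,h_i$ is drawn from $\supp(\mu_1^{*(2k)})\cap B(O(\e/M))$. That set is merely a closed set of positive measure --- if $\mu$ is finitely supported it is finite --- so there is no open parameter space on which a submersivity/genericity argument can solve an equation of positive codimension exactly; transversality only ever gives you approximate solutions, and approximate triviality is useless here because raising to the $M$th power magnifies the $V_{ss}$-error by a factor up to $M$, whereas the lemma demands $\pi_{ss}(\g_1')=1$ on the nose. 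Nor can the set $X$ of Lemma \ref{lm_rota2} rescue this: its elements have prescribed $\t_{ss}$ but uncontrolled $v_{ss}$, so it corrects rotation parts only. The paper's proof is built precisely to avoid ever having to hit a translation part exactly: it prescribes only rotation parts via $X$, and gets exact triviality on $V_{ss}\oplus V_o$ for free by taking the final element to be a commutator $[h_3(\g_3')^M,\,h_2\g_2']$ of two isometries that act on $V_{ss}\oplus V_o$ as pure translations; the $V_a$-translation $Mv$ then arises as $M(1-\t_a(\g_2'))v_1$, where $\g_2'$ is chosen (by perturbing a word in $\supp\mu_1$ inside $B(\e/M)$) so that $\t_a(\g_2')$ has no nonzero fixed vector, $u_1=(1-\t_a(\g_2'))^{-1}u_0$, and $\g_3'$ with $v_a(\g_3')\approx u_1$, $\t_a(\g_3')=1$ is supplied by Lemma \ref{lm_comm}. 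If you patch your construction by wrapping it in such a commutator, you are back to the paper's argument.

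A secondary gap: your structural claim $\pi_a(Z)=V_a$ rests on $\bar H=\pi_{ss}([G_1,G_1])$ being a connected, perfect Lie group ``with Lie algebra $\fh=[\fh,\fh]$''. But $\pi_{ss}(H)$ is only the image of a continuous homomorphism: it need not be closed, connected, or a Lie group at all, and what your identity $H=[H,H]Z$ actually requires is perfectness of $\bar H$ as an \emph{abstract} group, which you have not established (passing to closures destroys the exact equality $H=[H,H]Z$ and would only give density of $\pi_a(Z)$). The paper sidesteps this entirely: it never needs elements of $[G_1,G_1]$ killing the $V_{ss}$-action with prescribed $V_a$-part; Lemma \ref{lm_comm} (surjectivity of $\pi_a$ on all of $[G_1,G_1]$) plus the commutator trick above is all that is used.
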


This lemma allows us to find pure translations in $B(L\e)$ approximating an
arbitrary direction in $V_a$.
This (or Lemma \ref{lm_transSS}) lead to contradiction
if we set the parameters in such a manner that
$M|u_0|\approx 1/r_a$ and $L\e$ is sufficiently small, so
one of the assumptions of Lemma \ref{lm_transA} must fail.
We can derive the claim of Proposition \ref{pr_highssa} from either
 $\mu_1^{*(l_1)}(B(\e))<9/10$ or $\mu_1^{*(2)}(B(\e/M))<1-c$.
In the second case e.g., we can get $\|\rho(\mu_1)\f\|_2\le1-c\e^2/M^2$.
This will imply the claim if we set $M\approx\max\{1,r_a^{-1}\}$.

Observe that the numbers $c$ and $L$ depend on $u_0$ in an
uncontrolled way.
Hence it is important to note that we will apply the lemma with
choosing $u_0$ from a fixed finite collection depending on the parameter $R$.

\begin{proof}
There is $\g_2\in G_1$, such that
$\t_a(\g_2)$ does not have any fixed vectors in $V_a$ except for 0.
This is an open condition, so we can assume that
$\g_2\in\supp\mu_1^{*(m)}$ for some integer $m$ depending on $\mu$.
Thus $\g_2=g_1\cdots g_{m}$ for some $g_i\in\supp\mu_1$.

There is a vector $u_1\in V_a$ such that $u_0=u_1-\t_a(\g_2)u_1$.
(Since $\t_a(\g_2)$ has no fixed vectors, $1-\t_a(\g_2)$ has
trivial kernel.)
We can find a small ball $U_i$ around each $g_i$ such that
$|u_1-\t_a(g_1'\cdots g_{m}')u_1-u_0|<|u_0|/20$ for any choice of $g_i'\in U_i$.
We set the constant $c$ in the lemma so that $\mu_1^{*(2)}(U_i)>c$
for all $i$.
This allows us to find an element
\[
\g_2'=g_1'\cdots g_{m}'\in B(m\e/M)
\]
such that $|u_1-\t_a(\g_2')u_1-u_0|<|u_0|/20$.

For reasons that will be clear at the end of the proof, we now
search for an element $\g_3'\in B(L\e/M)$  such that $v_a(\g_3')$
approximates $u_1$ instead of
$u_0$ which is the objective in the lemma.
By Lemma \ref{lm_comm}, we have elements $\g_4,\g_5\in G_1$
such that $u_1=\pi_a([\g_4,\g_5])$.
Using an argument very similar to the one above, we can
approximate $\g_4$ and $\g_5$ by elements in $B(m\e/M)$
(by taking  $m$ larger and $c$ smaller perhaps).
Then we can
find a vector $v_1$ and an element $\g_3'\in B(4m\e/M)$
such that $v_a(\g_3')=v_1$, $|v_1-u_1|<|u_0|/40$ and $\t_a(\g_3')=1$.

Now we make use of the set $X$
constructed in Lemma \ref{lm_rota2} to cancel the rotation parts
of $\g_2'$ and $\g_3'$ in the $V_{ss}$ component.
Let $h_2,h_3\in X$ be such that
$\t_{ss}(h_2)=\t_{ss}(\g_2')^{-1}$ and
$\t_{ss}(h_3)=\t_{ss}(\g_3')^{-M}$.
Then $h_2\cdot\g_2'$ and $h_3\cdot(\g_3')^M$ act on $V_{ss}\oplus V_o$ by
translation,
hence
\[
\g_1':=[h_3\cdot(\g_3')^M,h_2\g_2']
\]
acts trivially on $V_{ss}$ and $V_o$.
On the other hand, an easy calculation shows that
\[
\pi_a(\g_1')=(M(v_1-\t_a(\g_2')v_1),1)
\]
and
\[
|v_1-\t_a(\g_2')v_1-u_0|\le|u_1-\t_a(\g_2')u_1-u_0|+2|u_0|/40\le|u_0|/10
\]
and $\g_1'\in B((10m+256)\e)$ which was to be proved.
\end{proof}

\begin{proof}[Proof of Proposition \ref{pr_highssa}]
Without any significant changes to the argument in the proof
of Proposition \ref{pr_high}, we can deduce
from Lemma \ref{lm_transSS} the estimate
\be\label{eq_rss1}
\|\rho(\mu_1)\f\|_2\le 1-
c'\min\{r^2_{ss},{\log^{-3}(\|\f\|_\Lip+2)}\}.
\ee
We suppress the details but
carry out a similar argument which
proves
\be
\|\rho(\mu_1)\f\|_2
\le1-c'\min\{r_a^2,{\log^{-3}(\|\f\|_\Lip+2)}\}\label{eq_ra1}.
\ee

There is a unit vector $u\in S^a$ such that
\be\label{eq_eps0}
\int_{\xi_{ss}\in S^{ss},\xi_o\in S^{o},|\xi_a-u|<1/10}|
\f(r_{ss}\xi_{ss},r_a\xi_a,r_o\xi_o)|^2d\xi_{ss}d\xi_a d\xi_o>\e_0^2
\ee
for a constant $\e_0$ which depends only on the dimension
of $V_a$.
Moreover, we can choose $u$ from a fixed finite
sufficiently dense subset of $S^{a}$.
If $r_a<1$, let $M=\lfloor 10 r_a^{-1}\rfloor$ and let $M=1$
otherwise.
If $r_a<1$, then let $u_0=u/20$, otherwise let
$u_0=5u/\lceil 10 r_a \rceil$.
Let $C_1$, $c$ and  $L$ be the constants from Lemma \ref{lm_transA} with
this choice of $u_0$.
(Note that the possible values for $u_0$ are in a finite set which depends
only on the dimension of $V_a$ and $R$.)
Set $\e=\e_0/L$.

Assume to the contrary that $\mu_1^{*(l_1)}(B(\e))>9/10$ and
$\mu_1^{*(2)}(B(\e/M))>1-c$.
Then we can apply Lemma \ref{lm_transA}.
Let $v\in V_a$ and $\g_1'$ be as in the Lemma.
Then
\begin{align*}
\e_0^2&=L^2\e^2\ge\|\rho(\g_1')\f-\f\|_2^2\\
&\ge
\mathop{\int}_{\xi_{ss}\in S^{ss},\xi_o\in S^{o},|\xi_a-u|<1/10}
|1-e(\langle Mv,r_a\xi_a\rangle)|^2
|\f(r_{ss}\xi_{ss},r_a\xi_a,r_o\xi_o)|^2d\xi_{ss}d\xi_a d\xi_o
\end{align*}
With the above definitions, $Mu_0$ and $Mv$ approximate
$r_a^{-1} u/2$, hence $e(\langle Mv,\xi_a\rangle)$ is close to $-1$
in the domain of integration.
In particular, $|1-e(\langle Mv,\xi_a\rangle)|>1$ which is in contradiction
to (\ref{eq_eps0}).
(This somewhat vague discussion can be made precise by a straightforward
calculation.)

Now there are two possibilities:
Either $\mu_1^{*(l)}(B(\e))\le9/10$, which implies (\ref{eq_ra1})
as we have seen in the proof of Proposition \ref{pr_high}.
Or else $\mu_1^{*(2)}(B(\e/M))\le 1-c$.
If $\g_1^{-1}\cdot\g_2\notin B(\e/M)$, then
\[
{\rm Re}(\langle \rho(\g_2)\f,\rho(\g_1)\f \rangle)
={\rm Re}(\langle \rho(\g_1^{-1}\cdot\g_2)\f,\f \rangle)
\le1-\e^2/2M^2.
\]
Hence (\ref{eq_ra1}) follows:
\[
\| \rho(\mu_1)\f\|_2^2
=\int{\rm Re}(\langle \rho(\g_2)\f,\rho(\g_1)\f \rangle)
d\mu_1(\g_1)d\mu_1(\g_2)
\le 1-c\e^2/2M^2.
\]
Note that $1/M\ge\min\{r_a,1\}$.

If $p$ and $q$ are as in Lemma \ref{lm_connected}
and $L'$ is the number $L$ from that lemma, then we can conclude
{}from \eqref{eq_rss1} and \eqref{eq_ra1}
\begin{align*}
\|(\rho(\mu)^*\rho(\mu))^{L'}\f\|_2&= \|p\rho(\mu_1)\f+q\rho(\mu_2)\f\|_2\\
&\le1-pc'\min\{r_{ss}^2+r_a^2,{\log^{-3}(\|\f\|_\Lip+2)}\},
\end{align*}
which in turn implies the proposition.
\end{proof}

\subsection{Estimates using continuity arguments}
\label{sc_smallSSA}

We continue to use the notation, $V_{ss},V_a,V_0,$
etc. introduced in the beginning of Section \ref{sc_Plocal}.
Our goal is to prove the following estimate that complements
the results of the previous section.

\begin{prp}
\label{pr_smallSSA}
Assume that $\mu$ is non-degenerate and has finite moments of order $\a$
for some $\a\ge2$, and let $R>0$ be a number.
Then there is a number $C$ depending on $\mu, x_0, R$ and $\a$ such that
the following holds.
Let $0\le r_{ss},r_a\le R$ be numbers, $l$ a positive integer and $0\le s,\d\le 1$
numbers such that 
$l>C\log(s^{-1})\d^{-2}$,
$s>r_{ss}+r_a$ and $C^{-1}\ge\d\ge C(r_{ss}+r_a)$.
Then
\begin{align*}
r_{ss}^{1-\dim V_{ss}}r_{a}^{1-\dim V_{a}}&
\mathop{\int}_{\pi_{ss}(\xi)=r_{ss},\pi_{a}(\xi)=r_a,\d\le|\pi_{o}(\xi)|\le R}
|\wh\nu_l(\xi)| d\xi\\
&<C\log(s^{-1})^{1/2}s\d^{\dim V_o}
+C\log(s^{-1})^{\a/2}s^{\a}\d^{-\a-2}.
\end{align*}
\end{prp}

We indicate the approximate values of the parameters that we will
set in the next section:
We take $s\approx(\log^{1/2} l)l^{-1/2}$ and
$\d\approx l^{-\b}$, where $\b$ is slightly smaller than $1/2$.

To outline the idea of the proof, we temporarily assume that
$\t_{o}(G)$ is trivial and $\g_0=1$.
(We will reduce the problem to this situation by defining a measure
$\mu_1$ similar to the one we had in the previous section.)
We can restrict the action of $G$ on Fourier space to sets of the form
$\{\xi:|\pi_{ss}(\xi)|=r_{ss},|\pi_{a}(\xi)|=r_{a},\pi_o(\xi)=\xi_o\}$.
This gives rise to a unitary representation
\[
\rho_{r_{ss},r_a,\xi_o}(\g)\f(\xi_{ss},\xi_a)
=e(\langle r_{ss}\xi_{ss}+r_a\xi_a+\xi_o,v(\g)\rangle)
\f(\t_{ss}^{-1}(\g)\xi_{ss},\t_a^{-1}(\g)\xi_a).
\]
 of $G$ for each $r_{ss},r_a\ge0$ and $\xi_o\in V_o$ acting on the
space $L^{2}(S^{ss}\times S^a)$.

We will study the operators $\rho_{r_{ss},r_a,\xi_o}(\mu)$
defined analogously to \eqref{eq_rhoofeta}.
We consider the finite dimensional subspace $\PP_{\a-1}\subset L^{2}(S^{ss}\times S^a)$,
which we define as the restriction of polynomials of degree at most $\a-1$
to $S^{ss}\times S^a$.
This space is invariant for $\rho_{0,0,\xi_o}(\mu)$, and we will show that there
are only finitely many ``bad''  points in the ball $\{\xi_o:|\xi_o|\le R\}$ such that 
$\|\rho_{0,0,\xi_o}(\mu)|_{\PP_\a}\|=1$.
We will also understand the behavior of the function $\|\rho_{0,0,\xi_o}(\mu)|_{\PP_\a}\|$
in small neighborhoods of those ``bad''  points.
We then combine this with a continuity argument (essentially using that the above norm
function is continuous and attains its extrema on compact sets) to obtain bounds
for $\|\rho_{0,0,\xi_o}(\mu)|_{\PP_\a}\|$ for $\xi_o$ not too close to the ``bad'' points.

We also show that $\rho_{r_{ss},r_a,\xi_o}(\mu)$ is a small perturbation of $\rho_{0,0,\xi_o}(\mu)$
and the norm bounds are valid for the former operator, as well.
Then we show that we can approximate $\wh\nu_l$ by polynomials of degree $\a-1$ in the $\pi_{ss}(\xi)+\pi_a(\xi)$
coordinates, and using the norm bounds of $\rho_{r_{ss},r_a,\xi_o}(\mu)$ iteratively we get the
desired bound on $\wh\nu_l$.

We need to give a separate argument in the neighborhood of ``bad'' points.
The bounds in this case will be substantially weaker.
We will show that the only ``bad'' point for $\a=0$ is the origin,
and we can do a similar argument as above.

We note that there are examples, when ``bad'' points do occur.
We recommend to the reader to analyze the instructive example mentioned earlier:
when $G$ is generated by a one parameter family of skew rotations and all translations
perpendicular to the axes.

Some of the above ideas are related to the arguments of Section \ref{sc_low}
and hence motivated by Tutubalin's paper \cite{Tut-CLT}.

We give a lemma similar to Lemma \ref{lm_connected}, which introduces the measure $\mu_1$
mentioned above.

\begin{lem}
\label{lm_connected2}
Let $\mu$ be as in Proposition \ref{pr_smallSSA}.
Then we can write $\wt\mu^{*(L)}*\mu^{*(L)}=p\mu_1+q\mu_2$, with $1\ge p>0$,
where $\mu_1$ and $\mu_2$
are probability measures on $\Isom(\R^d)$ and $L\ge1$ is an integer
depending on $\mu$, $R$ and $x_0$.
In addition, the set
\be\label{eq_end}
\{v_o(\g):\g\in\supp\mu_1, |v_o(\g)|<1/(2R)\}
\ee
is not contained in a proper affine subspace of $V_o$.
Furthermore, $\mu_1$ is  symmetric, has
finite moments of order $\a$, $1\in\supp\mu_1$ and  the closure of the
group generated by $\supp\t(\mu_1)$ is $K^{\circ}$.
\end{lem}

\begin{proof}
The proof is very similar to that of Lemma \ref{lm_connected}.
The main difference is that we use non-degeneracy instead of the
Central Limit Theorem.
We fix a sufficiently large integer $L$.
We write
\[
G^\circ=\{\g\in G:\t(\g)\in K^\circ\}.
\]
Let $p=\wt\mu^{*(L)}*\mu^{*(L)}(G^\circ)$ and let $\mu_1$ be $1/p$ times the restriction of
$\wt\mu^{*(L)}*\mu^{*(L)}$ to $G^\circ$
The only non-trivial property
to check is that \eqref{eq_end} is not contained in a proper affine
subspace if $L$ is sufficiently large.

Denote by $o$ the order of $K/K^\circ$.
Fix an arbitrary $\g_0\in\supp \mu$ and let $x_0$ be the starting point of the random walk.
We show that if $L$ is sufficiently large, then we can find a set
\[
A\subset\{\pi_o(\g(x_0)):\g\in\supp \mu^{*(L)}\},
\]
which approximates an $(o+1)\times\cdots\times(o+1)$ grid contained
in a $1/(4R)$ neighborhood of $\pi_o(\g_0^L(x_0))$.
The approximation can be arbitrarily good, and what we need below is that
no proper affine subspace contains more than $|A|/(o+1)$ points of $A$.

To this end, we consider an $(o+1)\times\cdots\times(o+1)$ grid $A'$ contained in 
the $1/(4R)$ neighborhood of $\pi_o(x_0)$, and for each point $x\in A'$ let $D_x$ be the
complement of a small open neighborhood of $x$.
By non-degeneracy, we know that if $L$ is sufficiently large, then for any $x\in A'$
we have
\[
\{\pi_o(\g(x_0)):\g\in\supp \mu^{*(L)}\}\not\subset \pi_o(\g_0^L)(D_x).
\]
This implies the claim on the existence of the set $A$.

By the pigeon hole principle, there is $\t_1\in K$ such that
\[
B:=\{\pi_o(\g(x_0)):\g\in\supp\mu^{*(L)},\t(\g)\in\t(\g_0)\t_1 K^\circ\}\cap A
\]
is not contained in a proper affine subspace.
We choose an arbitrary element $\g_1\in\supp\mu^{*(L)}$ with $\t(\g_1)\in\t(\g_0)\t_1 K^\circ$
and $\pi_o(\g_1(x_0))\in B$.
We observe that
\[
\pi_o(\g_1^{-1})(B)\subset\{\pi_o(\g(x_0)):\g\in\supp \mu_1\}.
\]
We also note that the set $\pi_o(\g_1^{-1})(B)$ contains $x_0$ by construction and its
diameter is at most $1/(2R)$.
Since $\pi_o(\g)$ is a translation for all $\g\in\supp \mu_1$, we have
$v_o(\g)=\pi_o(\g(x_0))-x_0$ for those $\g$.
Therefore, $(\pi_o(\g_1^{-1})(B)-x_0)\subset \eqref{eq_end}$,
which is not contained in a proper affine subspace.
This finishes the proof.
\end{proof}

We start the program described above by
giving a few lemmata on the properties of the operators
$\rho_{r_{ss},r_a,\xi_o}(\mu_1)$.
Throughout the section, we assume that $\mu_1$ satisfies the properties stated
in Lemma \ref{lm_connected2} and in addition that it satisfies $(C)$.
By changing the origin, $\wh\nu_l$ gets multiplied by a character and this does not change
the statement of Proposition \ref{pr_smallSSA}.
Hence the assumption $(C)$ is justified by Lemma \ref{lm_O}.
We first show that $\rho_{r_{ss},r_a,\xi_o}(\mu_1)$
is a small perturbation of 
$\rho_{0,0,\xi_o}(\mu_1)$.

\begin{lem}
\label{lm_r=0}
There is a constant $C$ depending only on $\mu_1$ such that
\[
\|\rho_{r_{ss},r_a,\xi_o}(\mu_1)-\rho_{0,0,\xi_o}(\mu_1)\|<C(r_{ss}+r_a).
\]
\end{lem}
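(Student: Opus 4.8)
The plan is to exploit the fact that $T_{r_{ss},r_a,\xi_o}$ and $T_{0,0,\xi_o}$ differ only through the scalar multiplier in the representation $\rho_{r_{ss},r_a,\xi_o}$, the rotation parts being identical. Writing the difference pointwise, for $\f\in L^2(S^{ss}\times S^a)$ and $\g\in\supp(\mu_1)$ I would factor
\[
\rho_{r_{ss},r_a,\xi_o}(\g)\f-\rho_{0,0,\xi_o}(\g)\f=
e(\langle\xi_o,v(\g)\rangle)\bigl[e(\langle r_{ss}\xi_{ss}+r_a\xi_a,v(\g)\rangle)-1\bigr]
\f(\t_{ss}^{-1}(\g)\xi_{ss},\t_a^{-1}(\g)\xi_a),
\]
and then estimate the bracketed factor using $|e(t)-1|\le 2\pi|t|$ together with $|\xi_{ss}|=|\xi_a|=1$ on the relevant unit spheres, which gives a uniform (in $\xi_{ss},\xi_a,\xi_o$) bound $2\pi(r_{ss}+r_a)|v(\g)|$ by Cauchy--Schwarz.

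Next I would note that $e(\langle\xi_o,v(\g)\rangle)$ has modulus one and that precomposition with $(\t_{ss}(\g),\t_a(\g))$ is a unitary operator on $L^2(S^{ss}\times S^a)$, since rotations preserve the normalized surface measures. Consequently $\|\rho_{r_{ss},r_a,\xi_o}(\g)\f-\rho_{0,0,\xi_o}(\g)\f\|_2\le 2\pi(r_{ss}+r_a)|v(\g)|\,\|\f\|_2$ for every $\g$ in the support of $\mu_1$. Integrating this bound against $d\mu_1(\g)$ and applying Minkowski's integral inequality to pull the $L^2$ norm inside the integral yields
\[
\|(T_{r_{ss},r_a,\xi_o}-T_{0,0,\xi_o})\f\|_2\le 2\pi(r_{ss}+r_a)\Bigl(\int|v(\g)|\,d\mu_1(\g)\Bigr)\|\f\|_2.
\]
Since $\mu_1$ has finite moments of order $2$ (hence finite first moment by the Cauchy--Schwarz inequality), the integral is a finite constant depending only on $\mu_1$; taking $C=2\pi\int|v(\g)|\,d\mu_1(\g)$ completes the argument.

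I do not anticipate any genuine difficulty: this is essentially the $M=1$ case of the Taylor expansion already carried out in the proof of Lemma \ref{lm_Taylor} (see (\ref{eq_Taylor1})), transported to the present function space. The only point requiring a moment's care is that the translation entering the exponential is $r_{ss}\xi_{ss}+r_a\xi_a$ with $\xi_{ss},\xi_a$ ranging over the unit spheres of $V_{ss}$ and $V_a$, so that the linearization error is controlled by $(r_{ss}+r_a)|v(\g)|$ rather than by $|v(\g)|$ alone; the $\xi_o$-dependence drops out entirely because it enters only through the unimodular factor $e(\langle\xi_o,v(\g)\rangle)$, which is common to both operators.
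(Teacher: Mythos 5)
Your argument is correct and is essentially the paper's proof: the paper likewise expands the extra multiplier $e(\langle r_{ss}\xi_{ss}+r_a\xi_a,v(\g)\rangle)$ to first order (Taylor's theorem, error $O((r_{ss}+r_a)|v(\g)|)$), uses that rotations preserve the $L^2$ norm, and integrates against $\mu_1$ using its finite first moment. Your version just makes the linearization bound $|e(t)-1|\le 2\pi|t|$ and the resulting constant explicit, which is fine.
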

\begin{proof}
Let $\f\in L^2(S^{ss}\times S^{a})$.
Then by Taylor's theorem
\begin{align*}
\rho_{r_{ss},r_a,\xi_o}(\mu_1)\f(\xi_{ss},\xi_a)=
\int &(1+O(r_{ss}|v_{ss}(\g)|+r_a|v_a(\g)|))e(\langle\xi_{o},v_o(\g)\rangle)\\
&\cdot  \f(\t_{ss}(\g)^{-1}\xi_{ss},\t_a(\g)^{-1}\xi_a)d\mu_1(\g).
\end{align*}
Then
\[
\|\rho_{r_{ss},r_a,\xi_o}(\mu_1)\f-\rho_{0,0,\xi_o}(\mu_1)\f\|_2
\le C \int (r_{ss}|v_{ss}(\g)|+r_a|v_a(\g)|)
\|\f\|_2 d\mu_1(\g),
\]
which proves the claim.
\end{proof}

The next lemma is about the behavior of $\rho_{r_{ss},r_a,\xi_o'}(\mu_1)$
in a neighborhood of a ``bad'' point.

\begin{lem}
\label{lm_eigenf}
There are  constants $c$ and $C$
which depend only on
$\mu_1$ and $R$ such that the following holds.
Suppose that $\f\in L^2(S^{ss}\times S^a)$ and  $|\xi_o|\le R$ are
such that $\rho_{0,0,\xi_o}(\mu_1)\f=\f$.
Then
\[
\|\rho_{r_{ss},r_a,\xi_o'}(\mu_1)\f\|_2<1-c|\xi_o-\xi_o'|^2+C(r_{ss}^2+r_a^2)
\]
for every $r_{ss},r_a\ge0$ and $\xi_o'\in V_o$ with $|\xi_o'|<R$.
\end{lem}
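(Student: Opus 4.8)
The plan is to reduce the statement to a pointwise estimate on a scalar multiplier, in the spirit of Section~\ref{sc_low}. First I would observe that for $\g\in\supp\mu_1$ the operator $\rho_{0,0,\xi_o}(\g)$ is unitary on $L^2(S^{ss}\times S^a)$: since $\t_o(\g)=1$ on $\supp\mu_1$, the prefactor $e(\langle\xi_o,v_o(\g)\rangle)$ is a scalar of modulus one, and $(\xi_{ss},\xi_a)\mapsto(\t_{ss}(\g)^{-1}\xi_{ss},\t_a(\g)^{-1}\xi_a)$ preserves the normalised measure. Thus $T_{0,0,\xi_o}$ is an average of unitaries, so (assuming $\|\f\|_2=1$) the equality $1=\langle T_{0,0,\xi_o}\f,\f\rangle=\int\langle\rho_{0,0,\xi_o}(\g)\f,\f\rangle\,d\mu_1(\g)$ together with $|\langle\rho_{0,0,\xi_o}(\g)\f,\f\rangle|\le1$ forces $\rho_{0,0,\xi_o}(\g)\f=\f$ for $\mu_1$-almost every $\g$; explicitly,
\[
\f(\t_{ss}(\g)^{-1}\xi_{ss},\t_a(\g)^{-1}\xi_a)=e(-\langle\xi_o,v_o(\g)\rangle)\,\f(\xi_{ss},\xi_a)
\]
for a.e.\ $\g$ and a.e.\ $(\xi_{ss},\xi_a)$. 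Substituting this into the definition of $T_{r_{ss},r_a,\xi_o'}\f$ cancels the rotations inside the argument of $\f$ and leaves the pointwise identity $T_{r_{ss},r_a,\xi_o'}\f=G\cdot\f$, where
\[
G(\xi_{ss},\xi_a)=\int e\big(\langle\xi_o'-\xi_o,v_o(\g)\rangle+r_{ss}\langle\xi_{ss},v_{ss}(\g)\rangle+r_a\langle\xi_a,v_a(\g)\rangle\big)\,d\mu_1(\g).
\]
Hence $\|T_{r_{ss},r_a,\xi_o'}\f\|_2\le\sup_{\xi_{ss},\xi_a}|G(\xi_{ss},\xi_a)|$, and everything reduces to bounding $|G|$.

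Next I would estimate $|G|$ by passing to the square. Writing $w(\g)$ for the phase in the integrand and using the swap-symmetry of $\mu_1\otimes\mu_1$, the imaginary part of $G\overline G$ cancels and
\[
|G(\xi_{ss},\xi_a)|^2=\int\!\!\int\cos\big(2\pi(w(\g)-w(\g'))\big)\,d\mu_1(\g)\,d\mu_1(\g').
\]
With $u:=\xi_o-\xi_o'$ (so $|u|<2R$), $A:=\langle u,v_o(\g)-v_o(\g')\rangle$ and $B:=r_{ss}\langle\xi_{ss},v_{ss}(\g)-v_{ss}(\g')\rangle+r_a\langle\xi_a,v_a(\g)-v_a(\g')\rangle$, the product formula for the cosine together with $|\cos2\pi B-1|\le2\pi^2B^2$, $|\sin2\pi A|\le2\pi|u|\,|v_o(\g)-v_o(\g')|$, $|\sin2\pi B|\le2\pi|B|$, and the finiteness of the second moment of $\mu_1$, give after integration
\[
|G|^2\le1-\int\!\!\int(1-\cos2\pi A)\,d\mu_1\,d\mu_1+C(r_{ss}^2+r_a^2)+C|u|(r_{ss}+r_a).
\]
Absorbing the cross term by $C|u|(r_{ss}+r_a)\le\tfrac12\k|u|^2+C'(r_{ss}^2+r_a^2)$, the lemma is reduced to the lower bound
\[
\int\!\!\int\big(1-\cos2\pi\langle u,v_o(\g)-v_o(\g')\rangle\big)\,d\mu_1(\g)\,d\mu_1(\g')\ge\k|u|^2
\]
for all $|u|<2R$, with some $\k=\k(\mu_1,R)>0$.

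This last inequality is the crux, and the only point where the strengthened non-degeneracy \eqref{eq_end} is used. I would use \eqref{eq_end} (whose proof produces a small grid of translation values, as in Lemma~\ref{lm_connected}) to find a set $E\subset\supp\mu_1$ of positive $\mu_1$-measure on which $|v_o(\g)|$ is small enough — say at most $1/(8R)$ — that $|A|\le\tfrac12$ for all $\g,\g'\in E$ and all $|u|<2R$, and on which the vectors $\{v_o(\g):\g\in E\}$ affinely span $V_o$, so that the quadratic form $u\mapsto\int\!\!\int_{E\times E}\langle u,v_o(\g)-v_o(\g')\rangle^2\,d\mu_1\,d\mu_1$ is positive definite on $V_o$. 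Since $1-\cos2\pi t\ge c_1t^2$ for $|t|\le\tfrac12$, restricting the double integral to $E\times E$ then gives the required bound with $\k=c_0c_1$. Feeding this back yields $|G|^2\le1-\k|u|^2+C(r_{ss}^2+r_a^2)$, and hence $|G|\le1-\tfrac\k4|u|^2+\tfrac C2(r_{ss}^2+r_a^2)$ — taking the square root when the right-hand side lies in $[0,1]$ and using $|G|\le1$ otherwise. This is the assertion with $c=\k/4$; the degenerate case $V_o=\{0\}$ is trivial and is disposed of at the outset.

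The main obstacle is precisely the cosine lower bound of the previous paragraph. For a law $\mu_1$ whose $v_o$-marginal is lattice-like, $\langle u,v_o(\g)-v_o(\g')\rangle$ can be driven onto nonzero integers once $|u|$ is of order $R$, making $1-\cos$ fail to be comparable to the square of its argument; restricting the translations in \eqref{eq_end} to size below $1/(2R)$ is exactly what confines the relevant phases to $(-\tfrac12,\tfrac12)$ throughout the ball of radius $R$ and rescues the estimate. Everything else — the convexity/Cauchy--Schwarz unfolding of the eigenfunction hypothesis, the Taylor bookkeeping for $|G|^2$, and the absorption of the $|u|(r_{ss}+r_a)$ cross-term — is routine and parallels the arguments of Section~\ref{sc_low}.
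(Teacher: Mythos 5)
Your proposal is correct in substance, and its first half coincides with the paper's own argument: both exploit that $T_{0,0,\xi_o}$ is an average of unitaries to conclude $\rho_{0,0,\xi_o}(\g)\f=\f$ for $\mu_1$-a.e.\ $\g$, and both then reduce the lemma to a pointwise bound on the scalar multiplier $G$. Where you genuinely diverge is in bounding $G$. The paper keeps the factor $e(\langle\xi_o'-\xi_o,v_o(\g)\rangle)$ intact, Taylor-expands only in $r_{ss},r_a$, kills the linear term using condition $(C)$ together with the arithmetic--geometric mean inequality, and then only needs the soft bound $\Phi(\xi)\le 1-c_1|\xi|^2$ for the characteristic function $\Phi$ of the $v_o$-marginal, obtained by a Taylor expansion near $0$ and a continuity/compactness argument away from $0$; there the threshold $1/(2R)$ in (\ref{eq_end}) is calibrated exactly so that integer phases are excluded for $|\xi_o-\xi_o'|\le 2R$. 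You instead square, symmetrize over $\mu_1\otimes\mu_1$, and run a fully quantitative cosine estimate. This buys explicit constants in terms of second moments and lets you dispense with $(C)$ for this lemma (the linear-in-$r$ contribution is absorbed through the $\sin\cdot\sin$ cross term and AM--GM), at the cost of a stronger input, since $1-\cos 2\pi t\gtrsim t^2$ only for $|t|\le 1/2$.

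Two points need attention in a final write-up, though neither is fatal. First, (\ref{eq_end}) as stated only provides spanning translations of size $<1/(2R)$, which allows phases of size up to $2$ in your restricted double integral; you need size about $1/(8R)$. You flag this, and it is indeed available — the construction described after (\ref{eq_end}) (a grid of diameter $1/(2R)$ in the argument of Lemma \ref{lm_connected}) works verbatim with any smaller diameter — but you should state and prove that strengthened form rather than cite (\ref{eq_end}) itself. Second, linear spanning of the small vectors $v_o(\g)$ does not by itself give that the differences $v_o(\g)-v_o(\g')$ over your set $E$ span $V_o$, so your "affinely span" claim needs justification; the quickest fix is to use that $\mu_1$ is symmetric (Lemma \ref{lm_connected}), so $-v_o(\g)$ also occurs in the support and the differences contain $2v_o(\g)$, which yields the positive definiteness of your quadratic form and hence the lower bound $\kappa|u|^2$.
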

\begin{proof}
Since $\rho_{0,0,\xi_o}(\mu_1)$ is an average of unitary operators,
we must have $\rho_{0,0,\xi_o}(\g)\f=\f$ for all $\g\in\supp (\mu_1)$.
Then
\begin{align}
&\rho_{r_{ss},r_a,\xi_o'}(\mu_1)\f=
\int \rho_{r_{ss},r_a,\xi_o'}(\g)\rho_{0,0,\xi_o}(\g^{-1})\f
d\mu_1(\g)\nonumber\\
&\qquad=\f\cdot\int e(r_{ss}\langle\xi_{ss}, v_{ss}(\g)\rangle+
r_a\langle\xi_a, v_a(\g)\rangle+
\langle\xi_o'-\xi_o, v_o(\g)\rangle) d\mu_1(\g)\nonumber\\
&\qquad=\f\cdot[\int (1-2\pi ir_{ss}\langle\xi_{ss}, v_{ss}(\g)\rangle
-2\pi ir_a\langle\xi_a, v_a(\g)\rangle)\nonumber\\
&\qquad\qquad\qquad\qquad\quad\cdot e(\langle\xi_o'-\xi_o, v_o(\g)\rangle)
d\mu_1(\g)+O(r_a^2+r_{ss}^2)].
\label{eq_eigenf}
\end{align}

For every positive $c_0$, we can find $C'$ such that the following
estimate holds for the linear term in (\ref{eq_eigenf}):
\begin{align}
&\left|\int (2\pi ir_{ss}\langle\xi_{ss}, v_{ss}(\g)\rangle
+2\pi ir_a\langle\xi_a, v_a(\g)\rangle)
e(\langle\xi_o'-\xi_o, v_o(\g)\rangle) d\mu_1(\g)\right|\nonumber\\
&\le
\left|\int 2\pi ir_{ss}\langle\xi_{ss}, v_{ss}(\g)\rangle
+2\pi ir_a\langle\xi_a, v_a(\g)\rangle
 d\mu_1(\g)\right|
+C(r_{ss}+r_a)|\xi_o'-\xi_o|\nonumber\\
&\le C'(r_{ss}+r_a)^2+c_0|\xi_o'-\xi_o|^2\label{eq_lintermest}.
\end{align}
For the second inequality, we used $(C)$
to show that the first term vanishes, and the inequality between the
geometric and arithmetic means to estimate the second term.

Consider the function
\[
\Phi(\xi)=\int e(\langle\xi, v_o(\g)\rangle) d\mu_1(\g)
\]
on $V_o$.
Note that $\Phi$ depends only on $\mu_1$.
Combining (\ref{eq_eigenf}) and (\ref{eq_lintermest}) we get
\[
\|\rho_{r_{ss},r_a,\xi_o'}(\mu_1)\f\|_2\le
\Phi(\xi_o'-\xi_o)\cdot\f+
C'(r_{ss}^2+r_a^2)+c_0|\xi_o'-\xi_o|^2.
\]

If $\Phi(\xi)=1$ for some $\xi\neq 0$, then $\langle\xi,v_o(\g)\rangle$
is an integer for all $\g\in\supp(\mu_1)$ which contradicts to the property that
\eqref{eq_end} is not contained in a proper affine subspace,
hence impossible.
Using Taylor series expansion, $(C)$ and the moment condition,
we can show that
$\Phi(\xi)\le 1-c_1|\xi|^2$
for some $c_1>0$ and $|\xi|<R$.
These estimates prove the lemma if we set $c_0<c_1$.
\end{proof}

Let $X_\a\subset V_o$ be the set of those $\xi_o$ for which
there is $\f\in\PP_{\a-1}$ such that $\rho_{0,0,\xi_o}(\mu_1)\f=\f$.
This is the set whose elements we called ``bad'' points above.
We note that $\|\rho_{0,0,\xi_o}(\mu_1)\f\|_2=\|\f\|_2$ implies $\rho_{0,0,\xi_o}(\mu_1)\f=\f$
since $1\in\supp \mu_1$.
If $\rho_{0,0,\xi_o}(\mu_1)\f=\f$ and $\rho_{0,0,\xi_o'}(\mu_1)\f'=\f'$ for $\xi_o\neq\xi_o'$,
then $\f$ and $\f'$ are both eigenfunctions of $\rho_{0,0,\xi_o}(\mu_1)$
with different eigenvalues hence they are orthogonal.
Indeed, $\rho_{0,0,\xi_o}(\mu_1)\f'=\f'\cdot\int e(\langle\xi_o-\xi_o',v_o(\g)\rangle) d\mu_{1}(\g)$,
as the previous proof shows.
Since $\PP_{\a-1}$ is finite dimensional, $X_\a$ is finite.

We now combine Lemma \ref{lm_eigenf} with a continuity argument to obtain norm estimates
for $\rho_{r_{ss},r_a,\xi_o}$ on $\PP_{\a-1}$.

\begin{lem}\label{lm_rhomu1norm}
There are  numbers $c,C>0$ depending only on $\mu_1$, $R$ and $\a$
such that the following holds.
Let $r_{ss},$ $r_a$ be numbers and $\xi_o\in V_o$
such that, $|\xi_o|\le R$ and  $\dist(\xi_o,X_\a)>C(r_{ss}+r_a)$.
Then
\[
\|\rho_{r_{ss},r_a,\xi_o}(\mu_1)\f\|_2
\le(1-c\,\dist(\xi_o,X_\a)^2)\|\f\|_2
\]
for any $\f\in \PP_{\a-1}$.
\end{lem}

\begin{proof}
We assume that $\|\f\|_2=1$.
For each point $\xi_o'\in X_\a$ we choose a compact set $D_{\xi_o'}\subset V_o$
such that their union cover the $R$-ball and $\xi_o'$ is the only element
of $X_\a$ in $D_{\xi_o'}$.
Denote by $D_{\xi_o''}$ one of the regions that contain $\xi_o$.
Write $W$ for the 1-eigenspace  of $\rho_{0,0,\xi_o''}(\mu_1)$ in $\PP_{\a-1}$,
and write $U$ for the orthogonal complement.
Write $\pi_W$ and $\pi_U$ for the orthogonal projections respectively.
Set $a=\|\pi_W\f\|_2$ and $b=\|\pi_U\f\|_2$.

Since $W$ and $U$ are invariant under $\rho_{0,0,\xi_o}(\mu_1)$, we have
\[
\pi_U\rho_{0,0,\xi_o}(\mu_1)\pi_W\f=0=\pi_W\rho_{0,0,\xi_o}(\mu_1)\pi_U\f.
\]

The function $\|\rho_{0,0,\xi_o}|_U\|$ is continuous in $D_{\xi_o''}$.
Denote by $1-c_1$ its maximum.
Observe that $c_1>0$ and it depends only on $R$, $\a$, $\mu_1$ and $\xi_o''$,
and that there are a finite number of possibilities for  $\xi_o''$, so $c_1$
can be bounded below by a positive number depending only on  $R$, $\a$, $\mu_1$.
Then
\[
\|\rho_{0,0,\xi_o}(\mu_1)\pi_U\f\|_2<(1-c_1)b.
\]

Combining the above inequalities with Lemma \ref{lm_r=0} we get
\begin{align}
&\|\pi_U\rho_{r_{ss},r_a,\xi_o}(\mu_1)\pi_W\f\|_2\le C(r_{ss}+r_a)a\label{eq_UW1}\\
&\|\pi_W\rho_{r_{ss},r_a,\xi_o}(\mu_1)\pi_U\f\|_2\le C(r_{ss}+r_a)b\label{eq_UW2}\\
&\|\pi_U\rho_{r_{ss},r_a,\xi_o}(\mu_1)\pi_U\f\|_2<(1-c_1/2)b\label{eq_UW3}.
\end{align}
if $r_{ss}$ and $r_a$ are sufficiently small (depending on $c_1$).

We get 
\be
\|\pi_W\rho_{r_{ss},r_a,\xi_o}(\mu_1)\pi_W\f\|_2<(1-c\,\dist(\xi_o,X_\a)^2)a
\label{eq_UW4}
\ee
from Lemma \ref{lm_eigenf}.

Combining estimates (\ref{eq_UW1}--\ref{eq_UW4}) we can write
\begin{align*}
\|&\rho_{r_{ss},r_a,\xi_o}(\mu_1)\f\|_2^2
\le[(1-c\,\dist(\xi_o,X_\a)^2)a+C(r_{ss}+r_a)b]^2\\
&\qquad\qquad\qquad\qquad\qquad\qquad\qquad
+[(1-c_1/2)b+C(r_{ss}+r_a)a]^2\\
&\qquad\le(1-c\,\dist(\xi_o,X_\a)^2)a^2+(1-c_1/2)b^2\\
&\qquad\qquad\qquad\qquad\qquad\qquad\qquad
+4C(r_{ss}+r_a)ab+
C^2(r_{ss}+r_a)^2\\
&\qquad\le\left(1-\frac{c\,\dist(\xi_o,X_\a)^2}{2}\right)a^2
+\left(1-c_1/2+
\frac{C_2(r_{ss}+r_a)^2}{\dist(\xi_o,X_\a)^2}\right)b^2\\
&\qquad\qquad\qquad\qquad\qquad\qquad\qquad
+C^2(r_{ss}+r_a)^2.
\end{align*}
We used the inequality between the geometric and
the arithmetic means in the last line.
We can assume
$10C_2(r_{ss}+r_a)^2<c_1\,\dist(\xi_o,X_\a)^2$, and the lemma
follows.
\end{proof}

The following lemma allows us to approximate
$\wh\nu_l$ by polynomials in the $\xi_{ss}$ and $\xi_a$
variables using Taylor expansion.

\begin{lem}
\label{lm_Brkhldr}
Let $\mu$ be a probability measure on $\Isom(\R^d)$ with finite moments of order $\a$
and suppose that there are no points but the origin that is fixed by all $\t(\g)$ for $\g\in\supp \mu$.
Then there is a constant $C$ depending on $\a$ and $\mu$ such that
\[
\int |v(\g)|^\a d\mu^{*(l)}(\g)\le C l^{\a/2}.
\]
\end{lem}
\begin{proof}
Changing the origin changes the $\a$th order moments by an additive constant
at most, so for the purposes of this proof, we can assume that $\mu$ satisfies $(C)$
due to Lemma \ref{lm_O}.
Let $X_1,\ldots, X_l$ be independent random isometries with law $\mu$.
Consider the sequence of random vectors
\[
Y_l=v(X_1\cdots X_l)=v(X_1)+\t(X_1)v(X_2)+\ldots+\t(X_1)\cdots
\t(X_{l-1})v(X_l).
\]
By $(C)$, these form a martingale, and
its conditional
moments of order $\a$ are uniformly bounded.
Thus the lemma follows from Burkholder's inequality, see 
\cite[Theorem 3.2]{Bur-martingale}.
\end{proof}

Note that if we apply the above lemma for the measures $\pi_{ss}(\mu)$ and $\pi_a(\mu)$,
then we get
\be\label{eq_Brkhldr}
\int |v_{ss}(\g)|^\a+|v_a(\g)|^\a d\mu^{*(l)}(\g)\le C l^{\a/2}.
\ee

We denote by $Y_\a$ the largest subset of $X_\a$ invariant under
$\t_o(\supp \mu)$ and we write $Z_\a=X_\a\backslash Y_\a$.
Let $\d$ be a number, which satisfies the inequalities
$C^{-1}\ge \d\ge C(r_{ss}+r_a)$ as in Proposition \ref{pr_smallSSA},
where $C$ is a number that may depend
on $\mu,x_0,R$ and $\a$.
We write
\begin{align*}
D_1&=\{\xi: |\pi_{ss}(\xi)|=r_{ss},|\pi_{a}(\xi)|=r_{a},\dist(\pi_o(\xi),Y_\a\backslash\{0\})\le\d\},\\
D_2&=\{\xi: |\pi_{ss}(\xi)|=r_{ss},|\pi_{a}(\xi)|=r_{a},
\dist(\pi_o(\xi),Y_\a)\ge\d,|\pi_o(\xi)|\le R\},\\
D_3&=\{\xi: |\pi_{ss}(\xi)|=r_{ss},|\pi_{a}(\xi)|=r_{a},
\dist(\pi_o(\xi),X_\a)\ge\d,|\pi_o(\xi)|\le R\}.
\end{align*}
Observe that $D_1\cup D_2$ is the domain of integration in Proposition \ref{pr_smallSSA}.
We also note that $D_1$ and $D_2$ are invariant under $\t(\supp \mu)$, while
$D_3$ is invariant under $\t(\supp \mu_1)$.
These features will be important in what follows.
We denote by $\|\cdot\|_{L^2(D_i)}$ the $L^2$ norm with respect to the natural
volume measure on these manifolds normalized to have total mass 1.
That is, we have $\|1\|_{L^2(D_i)}=1$ by our convention.

We now estimate the Fourier transform of $\mu_1.\nu_l$ on $D_3$
using Lemma \ref{lm_rhomu1norm} and approximating $\nu_l$ by polynomials
in $\pi_{ss}(\xi)\oplus\pi_a(\xi)$.

\begin{lem}\label{lm_D3mu1}
There are numbers $c,C$ depending only on $\mu_1, x_0, R$  and $\a$ such that for any integer
$l$, we either have
\begin{align*}
\left\|\int e(\langle v(\g),\xi\rangle)\wh\nu_{l}(\t(\g)^{-1}\xi)d\mu_1(\g)\right\|_{L^2(D_3)}
&\le (1-c\d^2)\|\wh\nu_{l}\|_{L^2(D_3)}\qquad{\rm or}\\
\|\wh\nu_{l}\|_{L^2(D_3)}
&\le C\d^{-2}(r_{ss}+r_a)^{\a}l^{\a/2}.
\end{align*}
\end{lem}

\begin{proof}
Note that
\be\label{eq_FRW}
\wh\nu(\xi)=\int e(\langle v(\g)+\t(\g) x_0,\xi\rangle) d\mu^{*(l)}(\g),
\ee
where $x_0$ is the starting point of the random walk.
We fix a point $\xi_o\in V_o$ such that $\dist(\xi_o, X_\a)>\d$.
We take the Taylor expansion of \eqref{eq_FRW} around $\xi_o$.
Using Lemma \ref{lm_Brkhldr} and its corollary \eqref{eq_Brkhldr},
we find a polynomial $\f_{\xi_o}\in \PP_{\a-1}$
such that
\be
|\wh\nu_{l}(\xi)-\f_{\xi_o}(\pi_{ss}(\xi)/r_{ss},\pi_{a}(\xi)/r_{a})|
\le C (r_{ss}+r_a)^{\a}l^{\a/2}.\label{eq_alphapoly}
\ee
for all $\xi$ satisfying $\pi_o(\xi)=\xi_o$, $\pi_{ss}(\xi)=r_{ss}$
and $\pi_{a}(\xi)=r_{a}$.

By Lemma \ref{lm_rhomu1norm}
we have
\be\label{eq_rhomu1norm}
\|\rho_{r_{ss},r_a,\xi_o}(\mu_1)\f_{\xi_o}\|_2\le(1-c\d^2)\|\f_{\xi_o}\|_2.
\ee
We integrate \eqref{eq_alphapoly} and \eqref{eq_rhomu1norm}
for $\xi_o$:
\begin{align*}
&\left\|\int e(\langle v(\g),\xi\rangle)\wh\nu_{l}(\t(\g)^{-1}\xi)d\mu_1(\g)\right\|_{L^2(D_3)}\\
&\le \left\|\int e(\langle v(\g),\xi\rangle)
\f_{\pi_o(\xi)}(\t_{ss}(\g)^{-1}\pi_{ss}(\xi)/r_{ss},\t_{a}(\g)^{-1}\pi_{a}(\xi)/r_{a})
d\mu_1(\g)\right\|_{L^2(D_3)}\\
&\qquad\qquad+C(r_{ss}+r_a)^{\a}l^{\a/2}
\le(1-c\d^2)\|\wh\nu_{l}\|_{L^2(D_3)}
+C(r_{ss}+r_a)^{\a}l^{\a/2}.
\end{align*}
This finishes the proof.
\end{proof}

We give a similar estimate on $D_1$.
The argument is essentially the same, but for $\a=1$.

\begin{lem}\label{lm_D1mu1}
There are numbers $c,C$ depending only on $\mu_1, x_0, R$ and $\a$ such that for any integer
$l$, we either have
\begin{align*}
\left\|\int e(\langle v(\g),\xi\rangle)\wh\nu_{l}(\t(\g)^{-1}\xi)d\mu_1(\g)\right\|_{L^2(D_1)}
&\le (1-c)\|\wh\nu_{l}\|_{L^2(D_1)}\qquad{\rm or}\\
\|\wh\nu_{l}\|_{L^2(D_1)}
&\le C(r_{ss}+r_a)l^{1/2}.
\end{align*}
\end{lem}

\begin{proof}
First we prove that $X_1=\{0\}$.
Indeed, let $\xi_o\in X_1$.
Then for every $\g\in\supp \mu_1$ we have $\f=\rho_{0,0,\xi_o}(\g)\f$ for a constant function $\f$.
Hence $e(\langle v_o(\g),\xi_o\rangle)=1$ for all such $\g$.
Since the set \eqref{eq_end} is not contained in a proper affine subspace, this implies that
$\xi_o=0$ proving the claim.

We now fix a point $\xi_o\in V_o$ such that $\dist(\xi_o,X_\a)\le\d$.
Similarly to \eqref{eq_alphapoly}, we can find a constant function $\f_{\xi_o}$ such that
$|\wh\nu_l(\xi)-\f_{\xi_o}|\le C(r_{ss}+r_a)l^{1/2}$ for all $\xi$ satisfying
$\pi_o(\xi)=\xi_o$, $\pi_{ss}(\xi)=r_{ss}$
and $\pi_{a}(\xi)=r_{a}$.

Note that there is a number $c_1>0$ depending only on  $\mu_1, R$ and $\a$ such that
$\dist(\xi_o,X_1)=|\xi_o|>c_1$.
Then by Lemma \ref{lm_rhomu1norm}, we have
\[
\|\rho_{r_{ss},r_a,\xi_o}(\mu_1)\f_{\xi_o}\|_2\le(1-c)\|\f_{\xi_o}\|_2.
\]

As in the proof of Lemma \ref{lm_D3mu1}, we can deduce
\[
\left\|\int e(\langle v(\g),\xi\rangle)\wh\nu_{l}(\t(\g)^{-1}\xi)d\mu_1(\g)\right\|_{L^2(D_1)}
\le(1-c)\|\wh\nu_{l}\|_{L^2(D_1)}
+C(r_{ss}+r_a)l^{1/2},
\]
which proves the claim.
\end{proof}

We turn Lemma \ref{lm_D3mu1} into an estimate on $\wh\nu_{l+L}$, where $L$ is the number
from Lemma \ref{lm_connected2}.
We use a trick similar to \eqref{eq_symmetrize}.
Notice that the estimate is useful only if a large proportion of the $L^2$-mass of $\wh\nu_l$
is on $D_3$.

\begin{lem}\label{lm_D2muL}
There are numbers $c,C$ depending only on $\mu, x_0, R$  and $\a$ such that for any integer
$l$, we either have
\begin{align*}
\|\wh\nu_{l+L}\|_{L^2(D_2)}
&\le \|\wh\nu_{l}\|_{L^2(D_2)}-c\d^2\|\wh\nu_{l}\|_{L^2(D_3)}\qquad{\rm or}\\
\|\wh\nu_{l}\|_{L^2(D_3)}
&\le C\d^{-2}(r_{ss}+r_a)^{\a}l^{\a/2}.
\end{align*}
\end{lem}

\begin{proof}
We suppose that the second alternative of the conclusion does not hold.
Then it follows from Lemma \ref{lm_D3mu1} that:
\[
\left\|\int e(\langle v(\g),\xi\rangle)\wh\nu_{l}(\t(\g)^{-1}\xi)d\mu_1(\g)\right\|_{L^2(D_2)}
\le \|\wh\nu_{l}\|_{L^2(D_2)}-c\d^2\|\wh\nu_{l}\|_{L^2(D_3)}.
\]
We use here that both $D_2$ and $D_3$ are invariant under $\t(\supp \mu_1)$ and
their volumes are bounded by a constant multiple of each other.

Recall that $\wt\mu^{*(L)}*\mu^{*(L)}=p\mu_1+q\mu_2$, hence 
\be\label{eq_D32}
\left\|\int
e(\langle v(\g),\xi\rangle)\wh\nu_{l}(\t(\g)^{-1}\xi)d\wt\mu^{*(L)}*\mu^{*(L)}(\g)\right\|_{L^2(D_2)}
\le \|\wh\nu_{l}\|_{L^2(D_2)}-cp\d^2\|\wh\nu_{l}\|_{L^2(D_3)}.
\ee
We can write
\begin{align*}
\|\nu_{l+L}&\|^2_{L^2(D_2)}=
\fint_{D_2}\left|\int e(\langle v(\g),\xi\rangle)\wh\nu_{l}(\t(\g)^{-1}\xi)d\mu^{*(L)}(\g)\right|^2d\xi\\
&=\fint_{D_2}\iint e(\langle v(\g_1),\xi\rangle)\wh\nu_{l}(\t(\g_1)^{-1}\xi)\\
&\qquad\qquad\qquad\cdot e(-\langle v(\g_2),\xi\rangle)\overline{\wh\nu_{l}(\t(\g_2)^{-1}\xi)}
d\mu^{*(L)}(\g_2)d\mu^{*(L)}(\g_1)d\xi\\
&=\fint_{D_2}\iint e(\langle -\t(\g_2)^{-1}v(\g_2)+ \t(\g_2)^{-1}v(\g_1),\xi\rangle)\\
&\qquad\qquad\qquad\cdot \wh\nu_{l}(\t(\g_1)^{-1}\t(\g_2)\xi)\overline{\wh\nu_{l}(\xi)}
d\mu^{*(L)}(\g_2)d\mu^{*(L)}(\g_1)d\xi\\
&=\fint_{D_2}\int e(\langle v(\g),\xi\rangle)\wh\nu_{l}(\t(\g)^{-1}\xi)
d\wt\mu^{*(L)}*\mu^{*(L)}(\g)\overline{\wh\nu_{l}(\xi)}d\xi\\
&\le \|\wh\nu_l\|_{L^2(D_2)}\cdot
\left\|\int e(\langle v(\g),\xi\rangle)\wh\nu_{l}(\t(\g)^{-1}\xi)
d\wt\mu^{*(L)}*\mu^{*(L)}(\g)\right\|_{L^2(D_2)}\\
&\le\|\wh\nu_l\|_{L^2(D_2)}(\|\wh\nu_{l}\|_{L^2(D_2)}-cp\d^2\|\wh\nu_{l}\|_{L^2(D_3)})\\
&\le\left(\|\wh\nu_{l}\|_{L^2(D_2)}-\frac{cp\d^2}{2}\|\wh\nu_{l}\|_{L^2(D_3)}\right)^2,
\end{align*}
which proves the lemma.
We used the symbol $\fint$ to denote integration with respect to the normalized volume measure
of total mass 1.
\end{proof}

The same proof using Lemma \ref{lm_D1mu1} instead of Lemma \ref{lm_D3mu1}
gives the following:

\begin{lem}\label{lm_D1}
There are numbers $c,C$ depending only on $\mu, x_0, R$  and $\a$ such that for any integer
$l$, we either have
\begin{align*}
\|\wh\nu_{l+L}\|_{L^2(D_1)}
&\le(1-c) \|\wh\nu_{l}\|_{L^2(D_1)}\qquad{\rm or}\\
\|\wh\nu_{l}\|_{L^2(D_1)}
&\le C(r_{ss}+r_a)l^{1/2}.
\end{align*}
\end{lem}

Now we use rotations $\t_o(\g)$ for $\g\in\supp \mu$ to move the $L^2$ mass
away from $Z_\a$.
This allows us to prove that there is a number $0\le k\le |Z_\a|$ such that the
$L^2$-mass of $\wh\nu_{l+k}$ on $D_2$ is not concentrated near the points in $Z_\a$,
and we can upgrade Lemma \ref{lm_D2muL} into:

\begin{lem}\label{lm_D2}
There are numbers $c,C$ depending only on $\mu, x_0, R$  and $\a$ such that for any integer
$l$, we either have
\begin{align*}
\|\wh\nu_{l+|Z_\a|+L}\|_{L^2(D_2)}
&\le (1-c\d^2)\|\wh\nu_{l}\|_{L^2(D_2)}\qquad{\rm or}\\
\|\wh\nu_{l}\|_{L^2(D_2)}
&\le C\d^{-2} (r_{ss}+r_a)^{\a}l^{\a/2}.
\end{align*}
\end{lem}
\begin{proof}
It is clear that $\|\wh\nu_{l}\|_{L^2(D_2)}$ decreases as $l$ grows,
so it will be sufficient to prove the inequality for $\wh\nu_{l+k+L }$
for some $0\le k\le |Z_\a|$.

We show that there is some $c>0$  depending only on
$R$, $\mu$ and $\a$  such that there is $0\le k\le |Z_\a|$ such that
\be\label{eq_moveaway}
\|\wh\nu_{l+k }\|_{L^2(D_3)}\ge c \|\wh\nu_{l}\|_{L^2(D_2)}
\quad{\rm or}\quad\|\wh\nu_{l+k}\|_{L^2(D_2)}\le(1-c)\|\wh\nu_{l}\|_{L^2(D_2)}.
\ee
This combined with Lemma \ref{lm_D2muL}  finishes the proof.

Suppose that the first inequality in \eqref{eq_moveaway} does not
hold for $k=0$ say with $c=1/2$.
Then there is $\xi_o\in Z_\a$ such that
\[
\|1_{\{\xi:|\pi_o(\xi)-\xi_o|\le\d\}}\wh\nu_{l}\|^2_{L^2(D_2)}
\ge  \frac{\|\wh\nu_{l}\|_{L^2(D_2)}^2}{2|Z_\a|}.
\]
Here $1_{\{\xi:|\pi_o(\xi)-\xi_o|\le\d\}}$ denotes the indicator function of the set
$\{\xi:|\pi_o(\xi)-\xi_o|\le\d\}$.
It is clear that there is some $c_0>0$ depending only on $\mu,R$ and $\a$
such that there is $k\le|Z_\a|$
and $\g_0\in \supp(\mu^{*(k)})$ with
$\dist(\t_o(\g_0)^{-1}\xi_o,X_\a)>c_0$.
We can assume that $\d<c_0/10$.
Hence there is a neighborhood of $\g_0$ in $\Isom (\R^d)$ that we denote by $U$
such that $\dist(\t_o(\g)^{-1}\xi_o,X_\a)>2\d$ for each $\g\in U$.
Thus for each $\g\in U$, we have
\[
\|1_{\{\xi:\dist(\pi_o(\xi),X_\a)\le\d\}}(\xi)\wh\nu_{l}(\t(\g)^{-1}\xi)\|^2_{L^2(D_2)}
\le (1-1/({2|Z_\a|}))\|\wh\nu_{l}\|_{L^2(D_2)}^2.
\]

Recall that
\[
\wh\nu_{l+k}(\xi)=\int e(\langle \xi, v(\g)) \wh\nu_l(\t(\g)^{-1}\xi) d\mu^{*(k)}(\g).
\]
Since $\mu^{*(k)}(U)\ge c_1>0$ for some number $c_1$ depending only on $\mu$, $R$ and $\a$,
we have
\[
\|1_{\{\xi:\dist(\pi_o(\xi),X_\a)\le\d\}}\wh\nu_{l+k}\|^2_{L^2(D_2)}
\le  (1-c_1/({2|Z_\a|}))\|\wh\nu_{l}\|_{L^2(D_2)}^2.
\]
This implies \eqref{eq_moveaway} if the number $c$ there is sufficiently small.
\end{proof}

\begin{proof}[Proof of Proposition \ref{pr_smallSSA}]
We apply Lemma \ref{lm_D1} iteratively for $l=0,L,$ $2L,\ldots,\lceil A\log s^{-1}\rceil L$.
If the first alternative of the lemma holds always, then we have
$\|\wh\nu_{\lceil A\log s^{-1}\rceil L}\|_{L^2(D_1)}\le s^{cA}$.
In the opposite case we get
\begin{align*}
\|\wh\nu_{\lceil A\log s^{-1}\rceil L}&\|_{L^1(D_1)}
\le\|\wh\nu_{\lceil A\log s^{-1}\rceil L}\|_{L^2(D_1)}\\
&\le C(r_{ss}+r_a)(\lceil A\log s^{-1}\rceil L)^{1/2}
\le CA^{1/2} 
\log^{1/2}( s^{-1})s.
\end{align*}
If we choose $A$ sufficiently large depending on $R,\mu$ and $\a$, the last expression
will be larger than $s^{A/c}$.

We also apply Lemma \ref{lm_D2} iteratively for $l=0,L+|Z_\a|,2(L+|Z_\a|),
\ldots,$ $\lceil A\d^{-2}\log s^{-1}\rceil (L+|Z_\a|)$.
If the first alternative of the lemma holds always, then we have
$\|\wh\nu_{\lceil A\d^{-2}\log s^{-1}\rceil (L+|Z_\a|)}\|_{L^2(D_2)}\le s^{cA}$.
In the opposite case we get
\begin{align*}
\|\wh\nu_{\lceil A\d^{-2}\log s^{-1}\rceil (L+|Z_\a|)}&\|_{L^1(D_2)}
\le\|\wh\nu_{\lceil A\d^{-2}\log s^{-1}\rceil (L+|Z_\a|)}\|_{L^2(D_2)}\\
&\le C \d^{-2}
(r_{ss}+r_a)^{\a}(\lceil A\d^{-2}\log s^{-1}\rceil (L+|Z_\a|))^{\a/2}\\
&\le CA^{\a/2}\d^{-\a-2}
\log^{\a/2}( s^{-1})s^{\a}.
\end{align*}
If we choose $A$ sufficiently large depending on $R,\mu$ and $\a$, the last expression
will be larger than $s^{A/c}$.
Summing the above two  estimates and taking into account
that $\Vol(D_1)\le Cr_{ss}^{\dim V_{ss}-1}r_{a}^{\dim V_{a}-1}\d^{\dim V_o}$ and
$\Vol(D_2)\le Cr_{ss}^{\dim V_{ss}-1}r_{a}^{\dim V_{a}-1}$
we get the claim.
\end{proof}

\subsection{Proof of the Local Limit Theorem}
\label{sc_pfllt}

Recall from the statement of the theorem that
$X_1,X_2\ldots$ are independent
identically distributed random isometries.
By the assumptions of the Theorem, the common law  of $X_i$
is non-degenerate and has finite moments of order $\a>d^2+3d$.

By Lemma \ref{lm_O}, we can choose the origin
in such a way that $v:=\E[X_1(x_0)-x_0]$ is fixed by $K$.
Now let $\g_v\in\Isom(\R^d)$ be translation by $-v$
and consider the random isometries $X_i\cdot\g_v$
and denote by $\mu$ their common law.
Then $\mu$ also satisfies $(C)$ besides non-degeneracy and the
above moment condition,
and clearly it is enough to prove the theorem for these modified
random isometries.

We can approximate any compactly supported continuous function
in $L^\infty$ norm by functions which have smooth (say $C^\infty$)
and compactly supported Fourier transform.
Therefore we consider an arbitrary function $f$ such that
$\wh f$ is smooth and compactly supported, and prove the conclusion
of Theorem \ref{th_local} for it.
Then this will prove the theorem by approximation.
Let $R>0$ be a number such that the support of $\wh f$ is contained in
the ball of radius $R$ around the origin.

We again write $\nu_l=\mu^{*(l)}.\d_{x_0}$ and use Plancherel's formula
\[
\int f(x) d\nu_l(x)=\int \wh f(\xi)\wh\nu_l(\xi)d\xi.
\]
Let $\Delta$ be the quadratic form from Proposition \ref{pr_low}.
It is easily seen that
\[
\lim_{l\to\infty}l^{d/2}\int \wh f(\xi) e^{-l\Delta(\xi,\xi)}d\xi
=c\wh f(0),
\]
where $c$ is a constant depending on $\Delta$.
Since $\wh f(0)=\int f(x)dx$, it is enough to show that
\[
\lim_{l\to\infty}l^{d/2}\int \wh f(\xi) (\wh\nu(\xi)
-e^{-l\Delta(\xi,\xi)})d\xi
=0.
\]

The rest of the proof is devoted to estimating the above integral.
We break it up into several regions.
Let $\d=l^{-\b}$ with $\b>d/(2d+2)$ and also
\[
\b(\a+2)-\frac{\a}{2}<-\frac{d}{2},
\]
which is possible since $\a>d^2+3d$.
(This will also be the $\d$ that we set in Proposition \ref{pr_smallSSA}.)
The first region is defined as $\Omega_1:=\{\xi:|\xi|\le \d\}$.
Proposition \ref{pr_low} implies that
\[
r^{-d+1}\int_{|\xi|=r}|\wh\nu_l(\xi)-e^{-l\Delta(\xi,\xi)}|^2d\xi\le C r^2.
\]
By the Cauchy-Schwartz inequality, we have
\[
r^{-d+1}\int_{|\xi|=r}|\wh\nu_l(\xi)-e^{-l\Delta(\xi,\xi)}|d\xi\le C r.
\]
After integrating for $0\le r\le \d=l^{-\b}$
and using $|\wh f(\xi)|\le\|f\|_1$, we get
\[
\left|\int_{\Omega_1} \wh f(\xi) (\wh\nu(\xi)-e^{-l\Delta(\xi,\xi)})d\xi\right|
\le C\|f\|_1l^{-\b(d+1)}.
\]
Since $\b>d/(2d+2)$, the right side is $o(l^{-d/2})$.

Recall the notation form the beginning of
Section \ref{sc_Plocal}, where we decomposed
$\R^d$ as an
orthogonal sum $V_{ss}\oplus V_a\oplus V_o$.
To simplify the notation, we write $\xi=(\xi_{ss},\xi_a,\xi_o)$, where $\xi_i$ is
the component of $\xi$ in the corresponding subspace $V_i$.

The second region we consider is
\[
\Omega_2:=\{\xi=(\xi_{ss},\xi_a,\xi_o):|\xi_{ss}|+|\xi_a|
>C_0l^{-1/2}\log^{1/2} l,|\xi|<R\},
\]
where $C_0$ is a suitable constant depending on $\mu$.
(This region has an overlap with the first one.)
We integrate the bound in Proposition
\ref{pr_large} for $C_0l^{-1/2}\log^{1/2} l<r_{ss}+r_a<R$ and $0\le r_o\le R$
and obtain
\[
\int_{\Omega_2}|\wh\nu_l(\xi)|^2d\xi
\le C \Vol(\Omega_2) e^{-c(C_0l^{-1/2}\log^{1/2} l)^2l}\le C\Vol(\Omega_2) l^{-(d+1)}
\]
 if we take $C_0$ sufficiently
large.
The number $d+1$ in the exponent is arbitrary.
Using the Cauchy-Schwartz inequality as above and $|\wh f(\xi)|\le\|f\|_1$,
we get
\[
\left|\int_{\Omega_2}
\wh f(\xi) (\wh\nu(\xi)-e^{-l\Delta(\xi,\xi)})d\xi\right|
\le C\Vol(\Omega_2)\|f\|_1 l^{-(d+1)/2}.
\]
Note that $e^{-l\Delta(\xi,\xi)}$ is negligible in the region
of integration.
The right side is again $o(l^{-d/2})$.

The third region we consider is given by the inequalities
\[
\Omega_3=\{\xi:
|\xi_{ss}|+|\xi_a|<C_0l^{-1/2}\log^{1/2} l,\;\d<|\xi_o|<R\}.
\]
Note that $C_0l^{-1/2}\log^{1/2}l$ is much smaller than $\d$, so if
$\xi\notin\Omega_1$, that is $|\xi|>2\d$, then $\xi\in\Omega_2\cup\Omega_3$.
We use Proposition \ref{pr_smallSSA} with $s=C_0l^{-1/2}\log^{1/2 }l$ and
integrate the bound for $0<r_{ss}+r_a<s$ and $\d\le r_o\le R$
and get
\be\label{eq_O3est}
\int_{\Omega_3} |\wh\nu_l(\xi)|d\xi
\le C s^{\dim V_{ss}+\dim V_a}
(\log^{1/2}(s^{-1})s\d^{\dim V_o}+\log^{\a/2}(s^{-1})s^\a\d^{-\a-2}).
\ee

For the first term on the right, we write
\begin{align*}
C s^{\dim V_{ss}+\dim V_a}&\log^{1/2}(s^{-1})s\d^{\dim V_o}\\
&=C l^{d/2} l^{(1/2-\b)\dim V_o}l^{-1/2}\log^{1+(\dim V_{ss}+\dim V_a)} l
\end{align*}
Since $\b>d/(2d+2)$, we have $\dim V_o(1/2-\b)<1/2$ so the right hand side is
$o(l^{-d/2})$.
For the second term in \eqref{eq_O3est}, we write
\begin{align*}
 C s^{\dim V_{ss}+\dim V_a}&\log^{\a/2}(s^{-1})s^\a\d^{-\a-2}\\
&\le C l^{-(\dim V_{ss}+\dim V_a)/2}l^{\b(\a+2)-\a/2}\log^{\a+(\dim V_{ss}+\dim V_a)/2}l.
\end{align*}
Since $\b(\a+2)-\a/2<-{d}/{2}$, the right hand side is again $o(l^{-d/2})$.

Using $|\wh f(\xi)|\le\|f\|_1$ again, we get
\[
\left|\int_{\Omega_3}
\wh f(\xi) (\wh\nu(\xi)-e^{-l\Delta(\xi,\xi)})d\xi\right|
\le o(l^{-d/2})\|f\|_1.
\]
Combining the estimates for the three regions above, we get the theorem.

\bibliography{varju}
\bibliographystyle{abbrv}

\end{document}